\newcommand*{\rom}[1]{\expandafter\@slowromancap\romannumeral #1@}
\numberwithin{equation}{section}
\newtheorem{thm}{Theorem}[section]
\newtheorem{fact}[thm]{Fact}
\theoremstyle{plain}
\newtheorem{corollary}[thm]{Corollary}
\newtheorem{definition}[thm]{Definition}
\newtheorem{lemma}[thm]{Lemma}
\newtheorem{proposition}[thm]{Proposition}
\newtheorem{theorem}[thm]{Theorem}
\newtheorem{remark}[thm]{Remark}
\newcommand\be{\begin{equation}}
\newcommand\ee{\end{equation}}
\newcommand\bea{\begin{eqnarray}}
\newcommand\eea{\end{eqnarray}}
\newcommand\bi{\begin{itemize}}
\newcommand\ei{\end{itemize}}
\newcommand\ben{\begin{enumerate}[(a)]}
\newcommand\een{\end{enumerate}}
\newcommand\bc{\begin{center}}
\newcommand\ec{\end{center}}
\def\ba#1\ea{\begin{align*}#1\end{align*}}
\newcommand{\R}{\ensuremath{\mathbb{R}}}
\newcommand{\C}{\ensuremath{\mathbb{C}}}
\newcommand{\Z}{\ensuremath{\mathbb{Z}}}
\newcommand{\Q}{\mathbb{Q}}
\newcommand{\N}{\mathbb{N}}
\newcommand{\T}{\mathbb{T}}
\newcommand{\ve}{\varepsilon}
\begin{document}
\title{An ergodic correspondence principle, invariant means and applications}
\author{Vitaly Bergelson}
\address{Department of Mathematics, Ohio State University, Columbus, OH 43210, USA}
\email{vitaly@math.ohio-state.edu}
\author{Andreu Ferr\'e Moragues}
\address{Department of Mathematics, Ohio State University, Columbus, OH 43210, USA}
\email{ferremoragues.1@osu.edu}

\maketitle
\begin{abstract}
A theorem due to Hindman states that if $E$ is a subset of $\N$ with $d^*(E)>0$, where $d^*$ denotes the upper Banach density, then for any $\ve>0$ there exists $N \in \N$ such that $d^*\left(\bigcup_{i=1}^N(E-i)\right) > 1-\ve$. Curiously, this result does not hold if one replaces the upper Banach density $d^*$ with the upper density $\bar{d}$. Originally proved combinatorially, Hindman's theorem allows for a quick and easy proof using an \emph{ergodic} version of Furstenberg's correspondence principle. 
In this paper, we establish a variant of the ergodic Furstenberg's correspondence principle for general amenable (semi)-groups and obtain some new applications, which include a refinement and a generalization of Hindman's theorem and a characterization of countable amenable minimally almost periodic groups.
\end{abstract}

\section{Introduction}
Many results in additive combinatorics are of the form: If $E \subseteq \N=\{1,2,\dots,\}$ is a "large" set, then $E$ is "highly organized". For example, the celebrated Szemer\'edi theorem \cite{sz} states that if $E$ has positive upper density, $\bar{d}(E):= \limsup_{N \to \infty} \frac{|E \cap \{1,\dots,N\}|}{N}>0$, then $E$ is "AP-rich", meaning that $E$ contains arbitrarily long arithmetic progressions. An equivalent form of Szemer\'edi's theorem is the following: if $E \subseteq \N$ has positive upper Banach density, i.e. $d^*(E):=\limsup_{N-M \to \infty} \frac{|E \cap \{M,\dots,N-1\}|}{N-M}>0$, then $E$ is AP-rich \footnote{Indeed, one can show that both the $d^*$ and $\bar{d}$ versions of Szemer\'edi's Theorem are equivalent to the original "finitistic" version in \cite{sz}. See also Theorem 1.5 in \cite{berg1}}. In \cite{fsz}, Furstenberg obtained a proof of Szemer\'edi's theorem via the ergodic Szemer\'edi theorem (EST), which states that for any probability measure preserving system $(X,\mathcal{B},\mu,T)$, any $A \in \mathcal{B}$ with $\mu(A)>0$, and any $k \in \N$, there exists $n \in \N$ such that
\[ \mu(A \cap T^{-n}A \cap \dots \cap T^{-kn}A)>0.\]
Furstenberg's approach (see \cite{fsz}) to deriving Szemer\'edi's theorem from his EST can be described as follows. Suppose that $\bar{d}(E)>0$. Viewing the $0$-$1$ valued sequence $\xi(m)=\mathbb{1}_E(m)$, $m \in \Z$, as an element of the symbolic space $\{0,1\}^{\Z}$, and denoting by $T$ the shift transformation $Tx(n)=x(n+1)$ for all $n \in \Z$, Furstenberg establishes the existence of a $T$-invariant measure $\mu$ on $\{0,1\}^{\Z}$ as follows. First, by a diagonalization argument, we can find a common subsequence $(N_k)$ so that the following limit exists for a countable dense subset of $C(\{0,1\}^{\Z})$
\begin{equation}\label{furstenbergrefinement}
    L(f):=\lim_{k \to \infty} \frac{1}{N_k}\sum_{n=1}^{N_k}f(T^n\xi).
\end{equation}
Applying a standard approximation argument, we see that formula \eqref{furstenbergrefinement} holds for all $f \in C(\{0,1\}^{\Z})$. Now, $L$ is a positive, normalized functional, and so by Riesz's representation theorem, there is a Borel probability measure $\mu$ on $\{0,1\}^{\Z}$ such that $L(f)=\int_{\{0,1\}^{\Z}} f \ d\mu$. Let $X:=\overline{\{T^n \xi :n \in \Z\}}$ be the orbit closure of $\xi$. Observe that $\mu$ is supported on $X$. Letting $A:=X \cap \{ x \in \{0,1\}^{\Z} : x(0)=1\}$, we get $\mu(A)=\int_{\{0,1\}^{\Z}} \varphi \ d\mu=L(\varphi)=\bar{d}(E)>0$, for $\varphi(x):=x(0)$. (Note that the fact $L(\varphi)=\bar{d}(E)$ follows from \eqref{furstenbergrefinement}).
\\ \\
By the EST there exists some $n \in \N$ such that $\mu(A \cap T^{-n}A \cap \dots \cap T^{-kn}A)>0$. Then, for any $x \in A \cap T^{-n}A \cap \dots \cap T^{-kn}A$ we have $x(0)=1, x(n)=1, \dots, x(kn)=1$. Since $x \in X$, we can choose $l \in \N$ such that $T^l\xi$ and $x$ are as close as we wish. This implies that for some $l \in \N$, $\mathbb{1}_E(l)=\mathbb{1}_E(l+n)=\dots=\mathbb{1}_E(l+kn)=1$ and hence $E$ contains the arithmetic progression $\{l,l+n,\dots,l+kn\}$ of length $k+1$.
\\ \\
One can check that the functional $L$ satisfies the identity
\begin{multline}\label{introblob}
\lim_{k \to \infty} \frac{|E \cap (E-n) \cap \dots \cap (E-kn) \cap [1,N_k]|}{N_k}=\\
L(\varphi \cdot T^n\varphi \cdot \dotso \cdot T^{kn}\varphi)=\int_{\{0,1\}^{\Z}} \mathbb{1}_A(x)\cdot \mathbb{1}_A(T^nx)\cdot \dotso \cdot \mathbb{1}_A(T^{kn}x) \ d\mu. \\ \textrm{ (see \cite{fsz} p. 210)}
\end{multline}
Now, from \eqref{introblob} we can derive the inequality
\begin{equation}
    \bar{d}(E \cap (E-n) \cap \dots \cap (E-kn)) \geq \mu(A \cap T^{-n}A \cap \dots \cap T^{-kn}A).
\end{equation}
The foregoing discussion leads to the more general principle:
\begin{theorem}[Furstenberg's correspondence principle for $\bar{d}$. (cf. \cite{berg1}, Theorem 1.1)] \label{intro0}Let $E \subseteq \N$ with $\bar{d}(E)>0$. Then, there is an invertible measure preserving system $(X,\mathcal{B},\mu,T)$ and a set $A \in \mathcal{B}$ with $\mu(A)=\bar{d}(E)>0$ satisfying
\begin{equation}\label{correspondenceineq1} \bar{d}(E \cap (E-h_1) \cap \dotso \cap (E-h_r)) \geq \mu(A \cap T^{-h_1}A \cap \dotso \cap T^{-h_r}A)\end{equation}
	for all $r \in \N$ and $h_1,\dots,h_r \in \N$.
\end{theorem}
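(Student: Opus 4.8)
The plan is to mimic, at the level of a general tuple $(h_1,\dots,h_r)$, the construction already carried out in the introduction for arithmetic progressions. First I would encode $E$ as the point $\xi \in \{0,1\}^{\Z}$ given by $\xi(m) = \mathbb{1}_E(m)$ and let $T$ be the shift $Tx(n) = x(n+1)$, a homeomorphism of the compact metrizable space $\{0,1\}^{\Z}$. The only genuine choice in the argument is that of the averaging sequence $(N_k)$: I would first select integers along which the $\limsup$ defining $\bar{d}(E)$ is attained, so that $\frac{|E \cap [1,N_k]|}{N_k} \to \bar{d}(E)$, and then, since $C(\{0,1\}^{\Z})$ is separable, pass to a further subsequence by a diagonal argument so that
\[ L(f) := \lim_{k \to \infty} \frac{1}{N_k} \sum_{n=1}^{N_k} f(T^n \xi) \]
exists for every $f$ in a fixed countable dense subset, hence, by uniform approximation, for all $f \in C(\{0,1\}^{\Z})$. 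Passing to a subsequence does not destroy the first property, so both hold simultaneously.

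Next I would check that $L$ is a positive normalized linear functional and invoke the Riesz representation theorem to produce a Borel probability measure $\mu$ with $L(f) = \int f \, d\mu$. Its shift-invariance follows from the telescoping estimate $\left| \frac{1}{N_k} \sum_{n=1}^{N_k} (f(T^{n+1}\xi) - f(T^n \xi)) \right| \le \frac{2\|f\|_\infty}{N_k} \to 0$, which gives $L(f \circ T) = L(f)$, i.e. $\mu$ is $T$-invariant. Setting $X := \overline{\{T^n \xi : n \in \Z\}}$, the measure $\mu$ is supported on $X$ and $T|_X$ is an invertible measure preserving transformation. Taking $\varphi(x) := x(0)$, which is continuous on $X$, and $A := \{x \in X : x(0) = 1\}$, so that $\mathbb{1}_A = \varphi$ on $X$, the first property of $(N_k)$ yields $\mu(A) = L(\varphi) = \lim_k \frac{|E \cap [1,N_k]|}{N_k} = \bar{d}(E)$.

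Finally, for the correspondence inequality I would observe that the product $\varphi \cdot T^{h_1}\varphi \cdots T^{h_r}\varphi$ is again continuous and that, along the orbit, $\varphi(T^{n+h_i}\xi) = \xi(n+h_i) = \mathbb{1}_E(n + h_i)$. Hence the product evaluated at $T^n \xi$ equals $\mathbb{1}_F(n)$ where $F := E \cap (E - h_1) \cap \cdots \cap (E - h_r)$, and therefore
\[ \mu(A \cap T^{-h_1}A \cap \cdots \cap T^{-h_r}A) = L(\varphi \cdot T^{h_1}\varphi \cdots T^{h_r}\varphi) = \lim_{k \to \infty} \frac{|F \cap [1,N_k]|}{N_k}. \]
Since a limit taken along the subsequence $(N_k)$ cannot exceed the full $\limsup$, the right-hand side is at most $\bar{d}(F)$, which is exactly the claimed inequality. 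The one point demanding care is the dual requirement on $(N_k)$: it must both realize $\bar{d}(E)$, so as to force $\mu(A) = \bar{d}(E)$ rather than a smaller value, and make all Cesàro averages converge. I expect this coordination, rather than any individual computation, to be the crux, and it is precisely the reason equality holds for $A$ while only an inequality survives for the intersections.
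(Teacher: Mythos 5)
Your proposal is correct and follows essentially the same route as the paper: your functional $L$, obtained by diagonalization over a countable dense subset of $C(\{0,1\}^{\Z})$ and the Riesz representation theorem, is precisely the weak* limit of the empirical measures $\frac{1}{N_k}\sum_{n=1}^{N_k}\delta_{T^n\xi}$ used in the proof of Theorem \ref{fcfolner}, of which Theorem \ref{intro0} is the special case $G=\Z$, $F_N=[1,N]$. The point you flag as the crux --- first extracting $(N_k)$ realizing $\bar{d}(E)$ and only then refining so that all Ces\`aro averages converge, which yields equality $\mu(A)=\bar{d}(E)$ but only an inequality for the intersections --- is handled in exactly the same way in the paper's argument.
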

\begin{remark}
All proofs of the above result known to the authors also give a version of \eqref{correspondenceineq1} for unions:
\begin{equation}\label{fursineq1} 
\bar{d}(E \cup (E-h_1) \cup \dots \cup (E-h_r)) \geq \mu(A \cup T^{-h_1}A \cup \dots \cup T^{-h_r}A)
\end{equation}
for all $r \in \N$ and $h_1,\dots,h_r \in \N$. See the discussion and ramifications of this fact below.
\end{remark}
A priori, one could expect that, given $E$ with $\bar{d}(E)>0$, it is possible to judiciously choose the system $(X,\mathcal{B},\mu,T)$ to be ergodic in the above construction. It turns out that, this is not always the case. To see this, we will invoke the following interesting result of Hindman: 
\begin{theorem}[Hindman's covering theorem \cite{hindman}]\label{intro2} Let $E \subseteq \N$ be a set with $d^*(E)>0$. Then, for every $\varepsilon>0$ there is some $N \in \N$ such that 
	\begin{equation}\label{hindmanoriginal}
	d^*\left(\bigcup_{i=1}^N (E-i)\right)>1-\varepsilon. 
\end{equation}
\end{theorem}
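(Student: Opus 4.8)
The plan is to run the ergodic argument sketched for Szemer\'edi's theorem in the introduction, but with $d^*$ in place of $\bar d$, exploiting that the Banach density singles out a measure concentrated on ``efficient'' ergodic components. First I would set up the symbolic system: put $\xi = \mathbb{1}_E \in \{0,1\}^\Z$, let $T$ be the shift, let $X = \overline{\{T^n\xi : n \in \Z\}}$ be the orbit closure, and let $A = X \cap \{x : x(0) = 1\}$ (a clopen set, so $\varphi = \mathbb{1}_A$ is continuous). Choosing intervals $I_k = [M_k, N_k)$ with $N_k - M_k \to \infty$ and $|E \cap I_k|/|I_k| \to d^*(E)$, and passing to a subsequence along which the empirical measures $\frac{1}{|I_k|}\sum_{n \in I_k}\delta_{T^n\xi}$ converge weak-$*$ to a measure $\mu$, I obtain (exactly as in the proof of Theorem \ref{intro0}, which applies verbatim to the intervals $I_k$) a $T$-invariant probability measure $\mu$ on $X$ with $\mu(A) = d^*(E) =: \alpha > 0$ together with the union inequality
$$ d^*\Big(\bigcup_{i=1}^{N}(E - i)\Big) \ \ge\ \mu\Big(\bigcup_{i=1}^{N} T^{-i}A\Big) \qquad \text{for every } N. $$

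The heart of the matter is to show that $\mu\big(\bigcup_{i\ge 1} T^{-i}A\big) = 1$. The key ingredient, special to $d^*$, is a maximality property: every $T$-invariant probability measure $\nu$ on $X$ satisfies $\nu(A) \le d^*(E)$. Indeed, by invariance $\nu(A) = \int \frac1N\sum_{l=0}^{N-1}\varphi(T^l x)\,d\nu(x)$, and for $x \in X$ the inner average equals $\frac1N|E \cap [m, m+N)|$ for a suitable $m$ (since $x$ agrees on the coordinates $[0,N)$ with some $T^m\xi$), hence is bounded by $\sup_m \frac1N |E \cap [m, m+N)|$, a quantity tending to $d^*(E)$ as $N \to \infty$; letting $N\to\infty$ gives $\nu(A) \le d^*(E)$. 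In particular the measure $\mu$ constructed above \emph{attains} this maximum.

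Now I would invoke the ergodic decomposition $\mu = \int \mu_\omega \, dP(\omega)$. Each $\mu_\omega$ is invariant, so $\mu_\omega(A) \le d^*(E)$, while $\int \mu_\omega(A)\,dP(\omega) = \mu(A) = d^*(E)$; thus $\mu_\omega(A) = d^*(E) > 0$ for $P$-almost every $\omega$. For any such $\omega$, the set $B = \bigcup_{i \ge 1} T^{-i}A$ satisfies $T^{-1}B \subseteq B$, so $B$ is invariant modulo $\mu_\omega$-null sets; by ergodicity $\mu_\omega(B) \in \{0,1\}$, and since $\mu_\omega(B) \ge \mu_\omega(T^{-1}A) = \mu_\omega(A) > 0$ we conclude $\mu_\omega(B) = 1$. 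Integrating, $\mu(B) = \int \mu_\omega(B)\,dP(\omega) = 1$. By continuity of measure from below there is some $N$ with $\mu\big(\bigcup_{i=1}^N T^{-i}A\big) > 1-\varepsilon$, and feeding this into the union inequality yields $d^*\big(\bigcup_{i=1}^N(E-i)\big) > 1 - \varepsilon$, as desired.

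I expect the main obstacle to be precisely the step that separates $d^*$ from $\bar d$: proving $\mu(B) = 1$. The argument shows this reduces to the fact that $\mu$ sits on ergodic components each of which already charges $A$ with the maximal mass $d^*(E)$; equivalently, $\mu$ wastes no mass on components with $\mu_\omega(A) = 0$. This is exactly the feature enjoyed by the empirical measures built from the Banach-density intervals $I_k$ but \emph{not}, in general, by the measures built from the initial intervals $[1,N_k]$ used for $\bar d$ (where a component with $\mu_\omega(A)=0$ contributes $\mu_\omega(B)=0$), which is why the theorem holds for $d^*$ and fails for $\bar d$. The one place to exercise care is the maximality estimate $\nu(A)\le d^*(E)$, which hinges on the convergence $\sup_m \frac1N|E\cap[m,m+N)| \to d^*(E)$, and on reading the union inequality off along the very intervals $I_k$ that define $\mu$.
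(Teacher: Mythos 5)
Your proof is correct, but it reaches ergodicity by a genuinely different mechanism than the paper. The paper proves Theorem \ref{intro2} as a corollary (Theorem \ref{hind1}) of the ergodic correspondence principle, Theorem \ref{fc}: there one first selects a single ergodic component $\mu_z$ of the limit measure with $\mu_z(A)\geq d^*(E)$ and then must \emph{re-establish} the correspondence inequality for this new measure; that step is nontrivial and is handled by Proposition \ref{orbit} (quasi-genericity of the point $\omega=\mathbb{1}_E$ for any ergodic measure on its orbit closure), whose proof translates the F\o lner sets to $F_{N_k}g_k$ --- this is exactly where the flexibility of $d^*$ under change of F\o lner sequence enters. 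You instead keep the original, possibly non-ergodic, limit measure $\mu$, for which the union inequality holds along the defining intervals $I_k$ exactly as in Theorem \ref{fcfolner}, and you rule out ``wasted'' ergodic components by a maximality lemma: every $T$-invariant measure on the orbit closure gives $A$ mass at most $d^*(E)$, proved via the pointwise bound $\frac1N\sum_{l=0}^{N-1}\varphi(T^lx)\leq \sup_m \frac1N\left|E\cap[m,m+N)\right|$ (valid since $x$ agrees with some $T^m\xi$ on the window $[0,N)$) together with Fekete's subadditivity lemma giving $\sup_m \frac1N\left|E\cap[m,m+N)\right|\to d^*(E)$; hence almost every ergodic component of $\mu$ charges $A$ with the full mass $d^*(E)$, so each gives the sweeping union $B=\bigcup_{i\geq 1}T^{-i}A$ measure one, and $\mu(B)=1$ follows by integration. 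All the individual steps check out, including the $T^{-1}B\subseteq B$ invariance argument and reading the union inequality off along the intervals that define $\mu$. Both arguments exploit the same feature of $d^*$ --- the freedom to translate the averaging windows --- but yours does so inside the maximality estimate rather than via quasi-genericity. What each buys: your route is more elementary and self-contained for $\Z$ (no quasi-generic points, no need to produce an ergodic system satisfying the full correspondence principle), while the paper's route yields the stronger reusable tool, Theorem \ref{fc} --- an ergodic system satisfying all the intersection/union/complement inequalities, used repeatedly in Sections 3--5 --- and transfers with no change to arbitrary countable amenable groups, where your subadditivity argument would need a substitute (in that generality the identity $d^*(E)=\max_\nu \nu(A)$ over invariant measures on the orbit closure requires more machinery than Fekete's lemma).
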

Curiously enough, Theorem \ref{intro2} fails to be true if one replaces $d^*$ with $\bar{d}$. Consider, for example, the following set $E \subseteq \N$ provided by Hindman in \cite{hindman}:
\begin{equation}\label{hindmanexample}
E:=\bigcup_{n \in \N} [2^{2n},2^{2n+1}).
\end{equation}
Then $\bar{d}(E)=\frac{2}{3}$, and one can check that, moreover, $\bar{d}(\bigcup_{i=0}^N (E-i))=\frac{2}{3}$ for all $N \in \N$. It follows that for this set $E$ any measure preserving system  $(X,\mathcal{B},\mu,T)$ satisfying \eqref{fursineq1} cannot be ergodic. The reason is that for an ergodic system $(X,\mathcal{B},\mu,T)$, if $\mu(A)>0$, then $\lim_{N \to \infty} \mu\left(\bigcup_{i=1}^N T^{-i}A\right)=1$. Assuming the inequality \eqref{fursineq1} is valid for the system in question, we would have
\begin{equation}\label{hindmanineq1} \bar{d}\left(\bigcup_{i=1}^N (E-i)\right) \geq \mu\left(\bigcup_{i=1}^N T^{-i}A\right), \end{equation}
and this cannot hold in our example since the left hand side is bounded away from $1$.
\\ \\
However, by appropriately amplifying the proof of Theorem \ref{intro0} discussed above, one can obtain the following ergodic variant thereof. This amplification will be carried out in the appropriate generality in Sections 2 and 6.
\begin{theorem}[Ergodic Furstenberg's correspondence principle. (cf. \cite{bhk}, Proposition 3.1)]\label{intro1} Let $E \subseteq \N$ be such that $d^*(E)>0$. Then, there is an ergodic measure preserving system $(X,\mathcal{B},\mu,T)$ and a set $A \in \mathcal{B}$ with $\mu(A)=d^*(E)>0$ satisfying
	\begin{equation}\label{correspondenceineq2} d^*(E \cap (E-h_1) \cap \dotso \cap (E-h_r)) \geq \mu(A \cap T^{-h_1}A \cap \dotso \cap T^{-h_r}A)\end{equation}
	for all $r \in \N$ and $h_1,\dots,h_r \in \N$.
\end{theorem}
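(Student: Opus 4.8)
The plan is to run the Furstenberg correspondence construction in the symbolic system, but to produce the measure not as a single weak-$*$ limit of empirical averages, but as an \emph{extremal} invariant measure maximizing the measure of the cylinder set $A$. As before, set $\xi=\mathbb{1}_E\in\{0,1\}^{\Z}$, let $T$ be the shift, let $X=\overline{\{T^n\xi:n\in\Z\}}$ be the orbit closure, let $\varphi(x)=x(0)$ (a continuous function, since $A=\{x\in X:x(0)=1\}$ is clopen), and for a tuple $\vec h=(h_1,\dots,h_r)$ write $g_{\vec h}=\varphi\cdot(T^{h_1}\varphi)\cdots(T^{h_r}\varphi)$, so that $g_{\vec h}(T^n\xi)=1$ exactly when $n\in F_{\vec h}:=E\cap(E-h_1)\cap\dots\cap(E-h_r)$ and $\int g_{\vec h}\,d\mu=\mu(A\cap T^{-h_1}A\cap\dots\cap T^{-h_r}A)$ for every invariant $\mu$. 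The theorem then reduces to producing an \emph{ergodic} invariant measure $\mu$ on $X$ with $\mu(A)=d^*(E)$ satisfying $\int g_{\vec h}\,d\mu\le d^*(F_{\vec h})$ for every $\vec h$.

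The engine of the argument is the single inequality
\[ \int g\,d\mu\;\le\;d^*\big(\{\,n:g(T^n\xi)=1\,\}\big) \]
for every ergodic $\mu$ on $X$ and every $g$ of the above form. To prove it I would invoke the pointwise ergodic theorem to pick a $\mu$-generic point $y\in X$, so that $\frac1N\sum_{n=0}^{N-1}g(T^ny)\to\int g\,d\mu$. Each partial average depends only on finitely many coordinates of $y$; since the orbit of $\xi$ is dense in $X$ and all points are $\{0,1\}$-valued, one matches $y$ with a shift $T^{m_N}\xi$ on the relevant coordinate window, turning the partial average into $|F\cap[m_N,m_N+N)|/N$, where $F=\{n:g(T^n\xi)=1\}$. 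Bounding this by $\sup_M|F\cap[M,M+N)|/N$ and letting $N\to\infty$, the right-hand side tends to $d^*(F)$, because the upper Banach density equals $\lim_{N\to\infty}\sup_M|F\cap[M,M+N)|/N$. Applying this with $g=g_{\vec h}$ yields the whole family of inequalities \eqref{correspondenceineq2} for any ergodic $\mu$.

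It remains to manufacture an ergodic $\mu$ with $\mu(A)=d^*(E)$. For the lower bound, choose intervals $I_k=[M_k,M_k+L_k)$ with $L_k\to\infty$ and $|E\cap I_k|/L_k\to d^*(E)$, form the empirical measures $\nu_k=\frac1{L_k}\sum_{n\in I_k}\delta_{T^n\xi}$, and pass to a weak-$*$ limit $\mu_0$; the usual boundary estimate shows $\mu_0$ is $T$-invariant and $\mu_0(A)=\lim|E\cap I_k|/L_k=d^*(E)$, so $\sup\{\mu(A):\mu\in\mathcal{M}_T(X)\}\ge d^*(E)$, where $\mathcal{M}_T(X)$ is the (nonempty, weak-$*$ compact, convex) set of $T$-invariant probability measures on $X$. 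Since $\varphi$ is continuous, $\mu\mapsto\mu(A)=\int\varphi\,d\mu$ is affine and continuous, so by the Bauer maximum principle it attains its maximum over $\mathcal{M}_T(X)$ at an extreme point, and the extreme points of $\mathcal{M}_T(X)$ are precisely the ergodic measures. Let $\mu$ be such an ergodic maximizer. Then $\mu(A)\ge d^*(E)$ by the lower bound, while the displayed inequality above (with $\vec h$ empty) gives $\mu(A)=\int\varphi\,d\mu\le d^*(E)$; hence $\mu(A)=d^*(E)$. This $\mu$ is ergodic and, by the previous paragraph, satisfies \eqref{correspondenceineq2} for all $r$ and all $h_1,\dots,h_r$.

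The main obstacle is the displayed inequality: honestly transferring the abstract time average $\int g\,d\mu$ against the invariant measure into a statement about the concrete densities of $E$ and its shifts. This is where one must use that $\mu$ is carried by the orbit closure of the single point $\xi$, that the relevant observables are locally constant (depending on finitely many coordinates), and the characterization of $d^*$ as a limit of supremal window densities; it is also the step that forces the passage to \emph{ergodic} measures, since the clean generic-point argument relies on the pointwise ergodic theorem. Alternatively, one can extend the upper bound to all invariant measures via the ergodic decomposition and then select an ergodic component $\mu_\omega$ of $\mu_0$ on which $\mu_\omega(A)=d^*(E)$, reaching the same conclusion.
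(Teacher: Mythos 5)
Your proposal is correct, but it takes a genuinely different route from the paper's in both of its main steps. The paper (proof of Theorem 2.6, specialized to $G=\Z$) also starts from the weak* limit $\nu$ of empirical measures along intervals realizing $d^*(E)$, but then (i) obtains the ergodic measure by applying the ergodic decomposition to $\nu$ and selecting a component $\mu$ with $\mu(A)\geq\nu(A)=d^*(E)$, and (ii) proves the upper bounds via its Proposition 2.7 on quasi-generic points: every ergodic measure on the orbit closure of $\omega=\mathbb{1}_E$ has $\omega$ quasi-generic along some F\o lner sequence (mean ergodic theorem, a.e. quasi-generic points, then translating the averaging windows $F_{N_k}\mapsto F_{N_k}g_k$ to reach $\omega$ itself), which turns $\mu$ of each clopen Boolean combination into an honest limit of window densities and hence bounds it by $d^*$. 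You instead prove the transfer inequality $\int g\,d\mu\leq d^*\big(\{n:g(T^n\xi)=1\}\big)$ by picking a $\mu$-generic point via the pointwise ergodic theorem, matching each finite Birkhoff window exactly by a shift of $\xi$ (local constancy of $g$ plus density of the orbit; note $\mu$ is carried by $X$, so the generic point does lie in the orbit closure), bounding by $\sup_M|F\cap[M,M+N)|/N$, and identifying the limit with $d^*(F)$ via Fekete's subadditivity lemma; and you manufacture the ergodic measure by maximizing the affine continuous functional $\mu\mapsto\mu(A)$ over $\mathcal{M}_T(X)$ and invoking Bauer's maximum principle, rather than decomposing a particular invariant measure. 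Both steps are sound; the only details worth making explicit are that the generic point may be chosen per fixed $g$ (or simultaneously for the countably many $g_{\vec h}$) and that $d^*(F)=\lim_N\sup_M|F\cap[M,M+N)|/N$ indeed follows from subadditivity. The trade-off: your window-matching-plus-Fekete argument is more elementary and self-contained, but it exploits the interval structure of $\Z$ (there is no direct analogue of the supremal-window characterization of $d^*$ for general F\o lner sequences), whereas the paper's quasi-genericity route yields exact equalities of $\mu$-values with densities along a F\o lner sequence and transfers verbatim to all countable amenable groups, which the paper needs for Theorem 2.6 and the applications in Sections 3 through 6; your closing alternative via the ergodic decomposition of $\mu_0$ is essentially the paper's selection step in disguise.
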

Similarly to the situation with Theorem \ref{intro0}, one can show that a version of \eqref{correspondenceineq2} holds for unions, i.e.
\begin{equation}\label{fursineq2}
d^*(E \cup (E-h_1) \cup \dots \cup (E-h_r)) \geq \mu(A \cup T^{-h_1}A \cup \dots \cup T^{-h_r}A)
\end{equation}
for all $r \in \N$ and $h_1,\dots,h_r \in \N$. This observation was made and utilized in \cite{bergelsonfish} (cf. \cite{bergelsonfish} Prop. 2.3).
\\ \\
The functions  $\bar{d}$ and $d^*$ have very similar properties. For example, $\bar{d}$ and $d^*$ satisfy $d^*(\N)=1$ and $\bar{d}(\N)=1$ and are shift-invariant (i.e. $\bar{d}(E-n)=\bar{d}(E)$ for all $n \in \N$ and $d^*(E-n)=d^*(E)$ for all $n \in \N$), which allows one to view $\N$ as a generalized probability space with either $\bar{d}$ or $d^*$ serving as an (admittedly vague) substitute for the probability measure. Moreover, either of Theorems \ref{intro0} and \ref{intro1} can be used to derive Szemer\'edi's theorem from EST.
\\ \\
While in certain investigations (see \cite{hostfra1}, \cite{hostfra2}, \cite{fra}, \cite{lemangomil}), it is of interest to consider the non-ergodic measure preserving systems corresponding to sets satisfying $\bar{d}(E)>0$ (or more generally, sets with the property $\bar{d}_{(I_N)}(E):=\limsup_{N \to \infty} \frac{|E \cap I_N|}{|I_N|}>0$, where $(I_N)$ is a sequence of intervals with increasing length), in some other situations, $d^*$ allows for stronger/sharper results. One such application was obtained in \cite{bhk}. Also, observe that Theorem \ref{intro1} immediately implies (via \eqref{fursineq2}) Theorem \ref{intro2}. The ergodic approach to Theorem \ref{intro2} has two additional advantages. First, it will allow us to characterize sequences $(n_k)_{k \in \N}$ with the "Hindman property", i.e., sequences $(n_k)_{k \in \N}$ such that for any $E \subseteq \Z$ with $d^*(E)>0$ one has that for all $\varepsilon>0$ there is some $N \in \N$ such that 
\begin{equation}\label{hindmanseq}
d^*\left(\bigcup_{k=1}^N (E-n_k)\right)>1-\varepsilon,
\end{equation}
(see \eqref{hindmanoriginal}). Second, the robustness of the ergodic approach will enable us to formulate and prove with ease a Hindman-like result for any amenable group. (A different proof of this result was obtained in \cite{bergelsonfish} through combinatorial methods).
\\ \\
While the fact that Furstenberg's correspondence principle works equally well for unions was not observed/utilized in the early papers in ergodic Ramsey theory, in hindsight this observation is quite natural if one takes into account the algebraic nature of this principle. The versatility of Furstenberg's correspondence principle can be perhaps best of all perceived via Gelfand's representation theorem. The possibility of using Gelfand's representation theorem was mentioned in \cite{furstenberg1} and was explicitly implemented in \cite{berg1}, \cite{bfurs}, \cite{bcrz}, and \cite{bergelsonproblemas}. See also the appendix to Section I in \cite{fdisjoint}. We would like to point out that the correspondence principle was not born with the ergodic theoretical proof of Szemer\'edi's theorem; indeed, a form of it appears already in Furstenberg's thesis \cite{fthesis}, where it was used as a tool to reconstruct a stationary process from its past. More concretely, the seminal idea which was put to action in \cite{fthesis} was to replace the approximate measure space $\Z$ together with the density preserving transformation $x \mapsto x+1$ by a genuine measure preserving system, namely: the orbit closure of the sequence $(\mathbb{1}_E(n))_{n \in \Z}$ in $\{0,1\}^{\Z}$, where $\mathbb{1}_E$ corresponds to the given time series.
\\ \\
It follows from Gelfand's representation theorem that any commutative, unital, countably generated $C^*$-algebra $\mathcal{A}$ is topologically and algebraically isomorphic to the algebra of continuous functions $C(X)$, where $X$ is a compact metric space. In our situation, such a $C^*$-algebra can be naturally generated by the family $(\mathbb{1}_{E-n})_{n \in \Z}$, where $E \subseteq \Z$ satisfies $d_{(I_N)}(E):=\lim_{N \to \infty} \frac{|E \cap I_N|}{|I_N|}>0$ for some sequence of intervals $(I_N)$ with $|I_N| \to \infty$. Let us denote this $C^*$-algebra by $\mathcal{A}_E$.
\\ \\
One can then refine $(I_N)$ to obtain a subsequence $(I_{N_k})$ so that for any set $F$ in the Boolean algebra $\mathcal{B}_E$ generated by the family $(E-n)_{n \in \Z}$, $d_{(I_{N_k})}(F)$ is well defined. In other words, $d_{(I_{N_k})}(\cdot)$ is a \emph{shift-invariant density} on $\mathcal{B}_E$. In turn, $d_{(I_{N_k})}(\cdot)$ induces a shift-invariant mean on $\mathcal{A}_E$, i.e., a positive functional $L: \mathcal{A}_E \rightarrow \C$ such that $L(1)=1$ and for any $F \in \mathcal{B}_E$, $L(\mathbb{1}_F)=d_{(I_{N_k})}(F)$. By Gelfand's representation theorem, there exists a compact metric space $X$ such that $\mathcal{A}_E$ is algebraically and topologically isomorphic to $C(X)$. Let $\tilde{L}: C(X) \rightarrow \C$ be the linear functional on $C(X)$ induced by $L$. By Riesz's representation theorem, the functional $\tilde{L}$ is given by a Borel probability measure on $X$. Let $\Gamma: \mathcal{A}_E \rightarrow C(X)$ denote the Gelfand isomorphism. Then, for all $\varphi \in \mathcal{A}_E$ we have
\[ L(\varphi)=\tilde{L}(\Gamma(\varphi))=\int_X \Gamma(\varphi) \ d\mu.\]
Since $\mathbb{1}_E$ is an idempotent, and since $\Gamma$ is, in particular, an algebraic isomorphism, the image, $\Gamma(\mathbb{1}_E)$, is again an idempotent, and hence of the form $\mathbb{1}_A$, where $A \in \textrm{Borel}(X)$ and satisfies $\mu(A)=\tilde{L}(\mathbb{1}_A)=\tilde{L}(\Gamma(\mathbb{1}_E))=d_{(I_N)}(E)$. 
\\ \\
The $L$-invariant shift operator given by $\varphi(n) \mapsto \varphi(n+1)$, $\varphi \in \mathcal{A}_E$ induces a $\Z$-action on $C(X)$, which is, by a theorem of Banach, induced by a measure preserving homeomorphism $T: X \rightarrow X$. Let $\varphi=\mathbb{1}_{E-h_1 \cap \dots \cap E-h_r}=\prod_{i=1}^r \mathbb{1}_{E-h_i}$, $h_1,\dots,h_r \in \Z$. Since $\mathcal{A}_E$ is an algebra, it is clear that $\varphi \in \mathcal{A}_E$. This leads us to the equality 
\begin{equation}\label{gelfand1}
 d_{(I_{N_k})}(E \cap E-h_1 \cap \dots \cap E-h_r)=L(\mathbb{1}_{E \cap E-h_1 \cap \dots \cap E-h_r})=\mu(A \cap T^{-h_1}A \cap \dots \cap T^{-h_r}A).
 \end{equation}
Linearity and the inclusion-exclusion principle imply that functions of the form $\varphi=\mathbb{1}_{E \cup E-h_1 \cup \dots \cup E-h_r}$ are also in $\mathcal{A}_E$. Since $\Gamma$ is an algebraic isomorphism, we see that a formula similar to \eqref{gelfand1} holds for unions as well:
\begin{equation}\label{gelfand2}
 d_{(I_{N_k})}(E \cup E-h_1 \cup \dots \cup E-h_r)=L(\mathbb{1}_{E \cup E-h_1 \cup \dots \cup E-h_r})=\mu(A \cup T^{-h_1}A \cup \dots \cup T^{-h_r}A).
 \end{equation}
As we already mentioned above, all known proofs of Furstenberg's correspondence principle are such that they allow for \eqref{fursineq1} and \eqref{fursineq2}. We explained above how one can get \eqref{fursineq1} with the help of Gelfand's transform, but, as it will be seen in Sections 2 and 6 (where we will prove and juxtapose general versions of Theorems \ref{intro0} and \ref{intro1} in the setup of amenable groups), the other methods also have an algebraic aspect which is adequate for our purposes. In any case, whatever method is used, it still has to be properly modified and amplified to allow for ergodicity. The approach that we choose in Section 2 uses symbolic dynamics and goes along the lines of the proof of the correspondence principle in \cite{fsz}, which is sketched above. As we will see, it 
allows to conveniently localize the place in the proof where the amplification leading to ergodicity has to be made.
\\ \\
At this point, we would like to make a comment which explains why utilizing $d^*$ allows one to establish the ergodic version of Furstenberg's correspondence principle. The key advantage of dealing with  $d^*$ is that, in the course of proving the correspondence principle, one can keep changing the sequence of intervals along which the averaging scheme is applied, which is essential for applicability of the ergodic decomposition. On the other hand, the proof of the $\bar{d}$ version of Furstenberg's correspondence priniciple is based on refining the averaging scheme given by the sequence of intervals along which $\bar{d}_{(I_N)}(E)>0$. Theorems \ref{intro0} and \ref{intro1} will be generalized, juxtaposed and exploited in Section 2.
\\ \\ 
The structure of the paper is as follows. In Section 2 we give a proof of a general version of Theorem \ref{intro1}, which works for any combination of unions, intersections and complements of shifts of $E$ for countable amenable groups. Moreover, we also give a proof of a general version of Theorem \ref{intro0}, comparing the two methods of proof, and pinpointing what exactly in the proof allows us to have ergodicity.
\\ \\
In Section 3 we give a quick proof of a general form of Hindman's theorem \ref{intro2} for countable cancellative amenable semigroups in Section 3. We also generalize the example \eqref{hindmanexample} to countable abelian groups and to finitely generated virtually nilpotent groups. In Section 3 we prove a few more combinatorial results which make use of the "ergodic" nature of $d^*$. 
\\ \\
Section 4 is devoted to the characterization of sequences $(n_k)$ for which \eqref{hindmanseq} holds and is followed by Section 5, where we give a characterization of countable amenable WM groups via the ``Hindman property'' with the help of a general version of Theorem \ref{intro1} proved in Section 2. A group $G$ is called \emph{weakly mixing} or WM if any measure preserving ergodic action of $G$ on a probability space is automatically weakly mixing \footnote{Such groups also appear in the literature under the name \emph{minimally almost periodic groups}.}. For example $A(\N)$, the group of finite even permutations of $\N$ is a countable amenable WM group. It is worth noting that in \cite{bcrz} a different characterization of WM groups is obtained via another result due to Hindman, namely his "finite sums" theorem \cite{hindman2}. 
\\ \\
Lastly, in Section 6 we establish a general form of an ergodic Furstenberg correspondence principle for discrete amenable semigroups and derive as a corollary a general form of Hindman's theorem.
\begin{remark} Throughout this paper, all the measures used are normalized so that $\mu(X)=1$.
\end{remark}
\section{Furstenberg's correspondence principle for amenable groups: $\bar{d}$ and $d^*$ versions}
The goal of this section is two-fold. First, we will formulate and prove "amenable" versions of Theorems \ref{intro0} and \ref{intro1}  (Theorems \ref{fcfolner} and \ref{fc}) that (i) encompass not only intersections but unions of sets and their complements, and (ii) are valid for general discrete countable amenable groups. Second, we will pinpoint the distinction between $\bar{d}$ and $d^*$ which allows for the stronger, ergodic version of Furstenberg's correspondence principle (see Theorem \ref{fc} below).
\\ \\
A definition of amenability which is convenient for our purposes uses the notion of F\o lner sequence. A sequence $(F_N)$ of finite non-empty subsets of a countable group $G$ is a \emph{(left) F\o lner sequence} if 
\[ \lim_{N \to \infty} \frac{|F_N \Delta gF_N|}{|F_N|}=0.\] 
for all $g \in G$. A countable group $G$ is \emph{amenable} if it admits a (left) F\o lner sequence \footnote{One can show that every amenable group $G$ admits also right- and indeed two-sided analogues of left F\o lner sequences (see Corollary 5.3 in \cite{namioka}). Throughout this paper we deal only with left F\o lner sequences and will routinely omit the adjective "left".}.
\\ \\
To facilitate the discussion, we will present first the versions of Theorems \ref{intro0} and \ref{intro1} (see Theorems \ref{fcfolner} and \ref{fc}) for general countable amenable groups. The proofs are in essence the same as those of Theorems \ref{intro0} and \ref{intro1}, but we will need this generality for the applications in the forthcoming sections.
\\ \\
We will then juxtapose the proofs of Theorems \ref{fcfolner} and \ref{fc} which will allow us to explain what exactly in the proof of Theorem \ref{fc} leads to the ergodicity of the system $(X,\mathcal{B},\mu,(T_g)_{g \in G})$.
\\ \\
To formulate Theorems \ref{fcfolner} and \ref{fc} we need a few definitions:
\begin{definition}
Let $E \subseteq G$ and let $(F_N)$ be a F\o lner sequence. We denote by $\bar{d}_{(F_N)}(E)$ the upper density of the set $E$ along $(F_N)$. This notion of largeness is given by the formula
\[ \bar{d}_{(F_N)}(E):=\limsup_{N \to \infty} \frac{|F_N \cap E|}{|F_N|} \]
\end{definition}
We are now in a position to define upper Banach density in this more general context:
\begin{definition}\label{densitygroups}
Let $E \subseteq G$. We denote by $d^*(E)$ the upper Banach density of the set $E$, which is given by
\[ d^*(E):=\sup\{ \bar{d}_{(F_N)}(E) : (F_N) \textrm{ is a F\o lner sequence}\} \footnote{We observe that for $G:=\Z$, when we compute $d^*$, it suffices to consider F\o lner sequences of the form $F_N=[a_N,b_N]$ with $b_N-a_N \to \infty$ -- see Remark 1.1 in \cite{bergelsonfish}.}.\]
\end{definition}
We begin with a short proof (based on the idea of the proof of Theorem \ref{intro0} in \cite{fsz} and \cite{furstenberg1} \footnote{We could also use for our goals the proofs of the amenable version of Theorem \ref{intro0} given in \cite{bmc} (Theorem 2.1) and \cite{bergelsonproblemas} (Theorem 4.11).} (see also \cite{fko})) of a generalization of Theorem \ref{intro0} for countable amenable groups. Note that Theorem \ref{intro0} corresponds to the special case $G:=\Z$ and $F_N:=[a_N,b_N]$ with $b_N-a_N \to \infty$.
\\ \\
In what follows we will use the notation $A^1=A$ and $A^0=A^c$.
\begin{theorem}[Furstenberg Correspondence Principle for countable amenable groups, $\bar{d}_{(F_N)}$ version]\label{fcfolner} 
 Let $(F_N)$ be a F\o lner sequence in a countable amenable group $G$ and let $E$ be a subset of $G$ with $\bar{d}_{(F_N)}(E)>0$. Then there exist a  probability measure preserving system $(X,\mathcal{B},\mu,(T_g)_{g \in G})$ and a set $A \in \mathcal{B}$ with $\mu(A)=\bar{d}_{(F_N)}(E)$ such that
	\begin{equation}\label{fcfolnerineq} \mu((T_{g_1})^{-1}A^{w_1} \star \dots \star (T_{g_k})^{-1}A^{w_k}) \leq \bar{d}_{(F_N)}(g_1^{-1}E^{w_1}\star \dots \star g_k^{-1}E^{w_k}),
	\end{equation}
for all $k \in \N$, all $\{g_1,\dots,g_k\} \subset G$ and all $(w_1,\dots,w_k) \in \{0,1\}^k$, and where each of the stars denotes either union or intersection with the understanding that
\begin{itemize}
\item[(i)] for all $1 \leq i \leq k-1$, the operation represented by $\star$ which stands between $E^{w_i}$ and $E^{w_{i+1}}$ is the same as the operation appearing between $A^{w_i}$ and $A^{w_{i+1}}$.
\item[(ii)] the choices of parentheses which are needed to make the expressions on both sides of formula \eqref{fcfolnerineq} well defined also match. \footnote{For example, we have
\[ \mu(((T_{g_1})^{-1}A \cup (T_{g_2})^{-1}A^c) \cap (T_{g_3})^{-1}A) \leq \bar{d}_{(F_N)}((g_1^{-1}E \cup g_2^{-1}E^c) \cap g_3^{-1}E) \textrm{ and }\]
\[ \mu((T_{g_1})^{-1}A \cup ((T_{g_2})^{-1}A^c \cap (T_{g_3})^{-1}A)) \leq \bar{d}_{(F_N)}(g_1^{-1}E \cup (g_2^{-1}E^c \cap g_3^{-1}E)).\]}
\end{itemize}
\end{theorem}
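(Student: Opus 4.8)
The plan is to follow the symbolic-dynamics route sketched after Theorem \ref{intro0}, carried out over the group $G$ rather than $\Z$, and to locate the inequality \eqref{fcfolnerineq} precisely in the passage from a fixed subsequence of $(F_N)$ to the full $\limsup$. First I would set $\Omega:=\{0,1\}^G$, a compact metrizable space on which $G$ acts by the shifts $(T_g x)(h):=x(hg)$; one checks directly that $T_{g_1}T_{g_2}=T_{g_1g_2}$, so this is a genuine left action by homeomorphisms. Encode $E$ as the point $\xi:=\mathbb{1}_E\in\Omega$ and record the basic identity $(T_g\,T_\gamma\,\xi)(e)=\xi(g\gamma)=\mathbb{1}_E(g\gamma)=\mathbb{1}_{g^{-1}E}(\gamma)$, which is the bridge between the dynamics and the combinatorics.

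Second, I would manufacture an invariant measure. Choose a subsequence along which $|F_N\cap E|/|F_N|\to\bar{d}_{(F_N)}(E)$, and then, by a diagonal argument over a countable dense subset of $C(\Omega)$ (which is separable since $\Omega$ is compact metrizable) followed by the usual approximation, pass to a further subsequence $(F_{N_j})$ so that
\[ L(f):=\lim_{j\to\infty}\frac{1}{|F_{N_j}|}\sum_{\gamma\in F_{N_j}} f(T_\gamma\xi) \]
exists for every $f\in C(\Omega)$. Then $L$ is positive and normalized, so Riesz's representation theorem yields a Borel probability measure $\mu$ with $L(f)=\int_\Omega f\,d\mu$; restricting to the orbit closure $X:=\overline{\{T_\gamma\xi:\gamma\in G\}}$, on which $\mu$ is supported, gives the underlying space. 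The role of amenability enters exactly here: for fixed $g$, reindexing $\gamma\mapsto g\gamma$ together with the F\o lner property $|gF_N\,\Delta\,F_N|/|F_N|\to 0$ shows $L(f\circ T_g)=L(f)$ (the error is bounded by $\|f\|_\infty\,|gF_{N_j}\,\Delta\,F_{N_j}|/|F_{N_j}|$), whence each $T_g$ preserves $\mu$ and $(X,\mathcal{B},\mu,(T_g)_{g\in G})$ is a probability measure preserving system.

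Third, I would set $A:=\{x\in X:x(e)=1\}$, a clopen set. By the bridge identity with $g=e$, $\mathbb{1}_A(T_\gamma\xi)=\mathbb{1}_E(\gamma)$, so $\mu(A)=L(\mathbb{1}_A)=\lim_j |F_{N_j}\cap E|/|F_{N_j}|=\bar{d}_{(F_N)}(E)$ by the choice of subsequence. For a general Boolean word, the key observation is that point evaluation $f\mapsto f(T_\gamma\xi)$ is an algebra homomorphism on $C(X)$, hence on $\{0,1\}$-valued continuous functions it intertwines complements (as $1-(\cdot)$), intersections (as products) and unions (via inclusion-exclusion), preserving the chosen parentheses. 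Since, for each $i$, $\mathbb{1}_{(T_{g_i})^{-1}A^{w_i}}(T_\gamma\xi)=\mathbb{1}_{A^{w_i}}(T_{g_i}T_\gamma\xi)=\mathbb{1}_{g_i^{-1}E^{w_i}}(\gamma)$ (using $(g_i^{-1}E)^c=g_i^{-1}E^c$ for the case $w_i=0$), this homomorphism property shows that the indicator of $(T_{g_1})^{-1}A^{w_1}\star\dots\star(T_{g_k})^{-1}A^{w_k}$, evaluated at $T_\gamma\xi$, equals $\mathbb{1}_{g_1^{-1}E^{w_1}\star\dots\star g_k^{-1}E^{w_k}}(\gamma)$, with matching operations and parentheses as in (i) and (ii). Averaging over $\gamma\in F_{N_j}$ and letting $j\to\infty$ computes $\mu$ of the left-hand set as a limit of densities along $(F_{N_j})$.

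The main, and essentially only delicate, point is the \emph{direction} of \eqref{fcfolnerineq}. Because the single subsequence $(F_{N_j})$ was fixed once and for all to realize $\bar{d}_{(F_N)}(E)$, for every other Boolean combination $F:=g_1^{-1}E^{w_1}\star\dots\star g_k^{-1}E^{w_k}$ one only obtains $\mu(\text{corresponding set})=\lim_j |F_{N_j}\cap F|/|F_{N_j}|\le \limsup_N |F_N\cap F|/|F_N|=\bar{d}_{(F_N)}(F)$, a limit along a subsequence being at most the full $\limsup$. This yields \eqref{fcfolnerineq} and explains why equality can be arranged only for the base set $A$ and not simultaneously for all words. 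This asymmetry is precisely the feature that will later distinguish the present $\bar{d}_{(F_N)}$ statement from the ergodic $d^*$ version of Theorem \ref{intro1}, where $d^*$ being a supremum over F\o lner sequences reverses the inequality and leaves enough freedom to also arrange ergodicity.
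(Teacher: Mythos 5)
Your proposal is correct and follows essentially the same route as the paper's proof: the shift system on $\{0,1\}^G$, a weak* limit of empirical measures along a subsequence of $(F_N)$ realizing $\bar{d}_{(F_N)}(E)$, the clopen cylinder $A=\{x: x(e)=1\}$, the pointwise identity between indicators of Boolean words in $(T_{g_i})^{-1}A^{w_i}$ and in $g_i^{-1}E^{w_i}$, and the observation that a limit along the fixed subsequence is bounded by the full $\limsup$. Your only deviation is the shift convention $(T_gx)(h)=x(hg)$ in place of the paper's $(T_gx)(g_0)=x(gg_0)$, which is immaterial (and in fact is the convention that makes $T_{g_1}T_{g_2}=T_{g_1g_2}$ and the left shifts $g^{-1}E$ come out exactly as stated).
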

\begin{proof} Let $X=\{0,1\}^G$ (viewed as a compact metric space with the usual product topology). Let $(T_g)_{g \in G}$ be the action of $G$ on $X$ by homeomorphisms defined by the formula $(T_gx)_{g_0}=x_{gg_0}$ for all $g, g_0 \in G$. Define $\omega \in X$ by setting $\omega(g)=1$ if $g \in E$ and $\omega(g)=0$ otherwise.
	Put $A=\{x \in X: x(e)=1\}$ (Here and elsewhere, $e$ denotes the neutral element of the group $G$). Note that $A$ is a clopen set in $X$ (and hence $\mathbb{1}_A$ is a continuous function). Moreover, we have that $T_g\omega \in A$ if and only if $g \in E$.
	Let $(F_{N_k})$ be a subsequence such that
	\[ \bar{d}_{(F_N)}(E)=\lim_{k \to \infty} \frac{|E \cap F_{N_k}|}{|F_{N_k}|}.\]
	Let $\mu$ be any weak* limit point of the sequence of measures
	\[ \frac{1}{|F_{N_k}|} \sum_{g \in F_{N_k}} \delta_{T_g\omega}.\]
	 Moreover, since $C(Y)$ is separable, there exists a further subsequence of $(F_{N_k})$, which we will, in order not to overload the notation, still denote by $(F_{N_k})$, such that, in the weak* topology, $\mu=\lim_{k \to \infty} \frac{1}{|F_{N_k}|}\sum_{g \in F_{N_k}} \delta_{T_g\omega}$. Clearly, $\mu$ is a $G$-invariant probability measure on $X$. We claim that $\mu(A)=\bar{d}_{(F_N)}(E)$. Indeed, since $\mathbb{1}_A$ is a continuous function, we can write
	\begin{equation}\label{weakconvergence} \mu(A)=\int_X \mathbb{1}_A \ d\mu=\lim_{k \to \infty} \frac{1}{|F_{N_k}|} \sum_{g \in F_{N_k}} \mathbb{1}_A(T_g\omega)=d_{(F_{N_k})}(E)=\bar{d}_{(F_N)}(E).\end{equation}
	Now let $g_1,\dots,g_k \in G$ and consider the set $(T_{g_1})^{-1}A^{w_1} \star \dotso \star (T_{g_k})^{-1}A^{w_k} $, where the stars denote an arbitrary fixed choice of either union or intersection. This is a clopen set in $X$, so its indicator function is, again, continuous. We have
	\[ \mu\left((T_{g_1})^{-1}A^{w_1} \star \dotso \star (T_{g_k})^{-1}A^{w_k} \right)=\lim_{k \to \infty} \frac{1}{|F_{N_k}|}\sum_{ g \in F_{N_k}} \mathbb{1}_{(T_{g_1})^{-1}A^{w_1} \star \dotso \star (T_{g_k})^{-1}A^{w_k}}(T_g\omega)\]
	\[=\lim_{k \to \infty}\frac{1}{|F_{N_k}|}|(g_1^{-1}E^{w_1} \star \dotso \star g_k^{-1}E^{w_k}) \cap F_{N_k}| \leq\bar{d}_{(F_N)}(g_1^{-1}E^{w_1} \star \dotso \star g_k^{-1}E^{w_k}). \]
  We are done.
\end{proof}
\begin{remark} Observe that when $G:=\Z$ and $F_N:=[a_N,b_N]$ (where $b_N-a_N \to \infty$) inequality \eqref{fcfolnerineq} implies the inequalities \eqref{correspondenceineq1} and \eqref{fursineq1}. As we saw in the Introduction with the help of Theorem \ref{intro2}, for some sets $E \subseteq \Z$ no system $(X,\mathcal{B},\mu,T)$ satisfying inequality \eqref{fursineq1} can be ergodic.
The general version of Theorem \ref{intro0} which also involves complements of sets, allows us to arrive at the same conclusion without invoking Theorem \ref{intro2}. Indeed, we see that (for $G:=\Z$ and $F_N:=[1,N]$) inequality \eqref{fcfolnerineq} implies
	\begin{equation}\label{remarkineq1} \bar{d}(E^c \cap (E-h)) \geq \mu(A^c \cap T^{-h}A)\end{equation}
for all $h \in \Z$. Take $E$ as in the Introduction (see \eqref{hindmanexample}). Then, $\mu(A)=\bar{d}(E)=\frac{2}{3}$, so $\mu(A^c)>0$. However, one can easily check (see also Section 3), that $\bar{d}(E^c \cap (E-h))=0$ for all $h \in \Z$. This contradicts inequality \eqref{remarkineq1}.
\end{remark}
\begin{remark} Let $G$ be a countable amenable group. Call a set $E$ a \emph{Hindman set} if there exists some F\o lner sequence $(F_N)$ such that
\[0<\bar{d}_{(F_N)}(E)<1 \textrm{  and  } \bar{d}_{(F_N)}\left(\bigcup_{g \in F} g^{-1}E\right)<\frac{3}{4} \]
for all finite sets $F \subseteq G$. One can show (see Proposition \ref{generalizedhindmanex}) that any countable abelian group has a Hindman set.
\\ \\
It is interesting to observe that if our countable amenable group $G$ admits a Hindman set, then any measure preserving system $(X,\mathcal{B},\mu,(T_g)_{g \in G})$ satisfying inequalities \eqref{fcfolnerineq} cannot be ergodic. This can be seen in two ways. First, using the special case of inequality \eqref{fcfolner} for unions, i.e.
\begin{equation}\label{remarkineq3} \bar{d}_{(F_N)} \left( \bigcup_{g \in F} g^{-1}E \right) \geq \mu \left( \bigcup_{g \in F} g^{-1}A \right) \textrm{ for all finite sets } F \subseteq G, \end{equation}
and arguing, as in the Introduction, that inequality \eqref{remarkineq3} together with the fact that $\bar{d}_{(F_N)}\left( \bigcup_{g \in F} g^{-1}E\right)=\bar{d}_{(F_N)}(E)<1$ contradict ergodicity.
\\ \\
Alternatively, we can use another special case of inequality \eqref{remarkineq1}, namely:
\[ \bar{d}_{(F_N)}(E^c \cap g^{-1}E) \geq \mu(A^c \cap g^{-1}A) \textrm{ for all } g \in G,\]
together with the fact that if $E$ is a Hindman set then $\bar{d}_{(F_N)}(E^c \cap g^{-1}E)=0$ for all $g \in G$. (This discussion will be completed in Section 3).
\end{remark}

We move now to an ergodic version of Theorem \ref{intro1} for general countable amenable groups. The ergodic amplification of Theorem \ref{fcfolner} hinges on two additional tools: the ergodic decomposition for amenable group actions, and a result about quasi-generic points (see Proposition \ref{orbit} below).
\begin{definition} Let $X$ be a compact metric space on which $G$ acts by homeomorphisms. Let $\mu$ be a $G$-invariant measure. We say that $x_0 \in X$ is \emph{quasi-generic} for $\mu$ if there exists a F\o lner sequence $(F_N)$ such that
	\[ \lim_{N \to \infty}\frac{1}{|F_N|}\sum_{g \in F_N}f(T_gx_0)=\int_X f \ d\mu\]
	for every $f \in C(X)$.
\end{definition}
The following Proposition is an amenable version of Proposition 3.9 in \cite{furstenberg2}. We include the proof for the reader's convenience. 
\begin{proposition}\label{orbit} Let $(T_g)_{g \in G}$ be an action of $G$ by homeomorphisms on a compact metric space $X$. Let $x_0 \in X$, and let $Y:=\overline{\{ T_gx_0 : g \in G\}}$. Suppose that $\mu \in \mathcal{M}(Y)$ is an ergodic $G$-invariant measure. Then $x_0$ is quasi-generic for $\mu$.
\end{proposition}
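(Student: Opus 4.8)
The plan is to first produce a single point $y_0 \in Y$ that is \emph{generic} for $\mu$ along a fixed F\o lner sequence, and then to transport this genericity to $x_0$ itself using the fact that $y_0$ lies in the orbit closure of $x_0$. For the first step, I would fix a tempered F\o lner sequence $(\Phi_N)$ (every countable amenable group admits one) and invoke the pointwise ergodic theorem for amenable group actions: since $\mu$ is ergodic, for each $f \in C(Y)$ one has $\frac{1}{|\Phi_N|}\sum_{g \in \Phi_N} f(T_g y) \to \int_Y f \, d\mu$ for $\mu$-almost every $y$. Taking a countable dense set $\{f_i\}_{i \in \N} \subseteq C(Y)$ and intersecting the corresponding full-measure sets, I obtain a set of full $\mu$-measure on which the averages converge to $\int f \, d\mu$ simultaneously for every $f \in C(Y)$ (by uniform approximation). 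As $\mu(Y) = 1$, this set is nonempty, so I may fix such a generic point $y_0 \in Y$; note that $x_0 \in \mathrm{supp}(\mu)$ is \emph{not} required.

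The transport step is the heart of the argument. Since $y_0 \in Y = \overline{\{T_g x_0 : g \in G\}}$, there is a sequence $(g_j)$ in $G$ with $T_{g_j} x_0 \to y_0$. The key observation is that for a \emph{fixed} finite set $\Phi_N$, the map $z \mapsto \frac{1}{|\Phi_N|}\sum_{g \in \Phi_N} f_i(T_g z)$ is continuous on $X$, being a finite sum of the continuous functions $z \mapsto f_i(T_g z)$. I would then build the desired sequence by a diagonal procedure: given $m$, genericity of $y_0$ yields an index $N_m$ (chosen with $N_m \geq m$ and increasing) so that the $\Phi_{N_m}$-averages of $f_1,\dots,f_m$ at $y_0$ are within $1/m$ of their integrals; continuity gives a neighborhood of $y_0$ on which they stay within $2/m$; and $T_{g_j}x_0 \to y_0$ lets me pick $j_m$ with $T_{g_{j_m}} x_0$ in that neighborhood. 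Setting $F_m := \Phi_{N_m}\, g_{j_m}$ and using the reindexing identity
\[ \frac{1}{|F_m|}\sum_{g' \in F_m} f_i(T_{g'} x_0) = \frac{1}{|\Phi_{N_m}|}\sum_{g \in \Phi_{N_m}} f_i\bigl(T_g(T_{g_{j_m}} x_0)\bigr), \]
the right-hand side is within $2/m$ of $\int f_i \, d\mu$ for all $i \leq m$. Letting $m \to \infty$ and approximating an arbitrary $f \in C(X)$ by the $f_i$ (the averages being bounded by $\|f\|_\infty$) gives the required convergence along $(F_m)$ for every $f$.

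I expect the main obstacle to be verifying that $(F_m)$ is genuinely a F\o lner sequence and handling the order of limits correctly. The order-of-limits issue is resolved precisely by \emph{freezing} the index $N = N_m$ before invoking the orbit-closure approximation: one cannot interchange $j \to \infty$ with $N \to \infty$, but a finite average is continuous in the base point, so the approximation $T_{g_{j_m}} x_0 \approx y_0$ may be applied only after $N_m$ has been fixed. For the F\o lner property, the crucial point is that $F_m$ is a \emph{right} translate of the left-F\o lner set $\Phi_{N_m}$: since right multiplication by $g_{j_m}$ is a bijection, $|h F_m \,\Delta\, F_m| = |h\Phi_{N_m}\, \Delta\, \Phi_{N_m}|$ and $|F_m| = |\Phi_{N_m}|$, whence $|h F_m \,\Delta\, F_m|/|F_m| = |h\Phi_{N_m}\,\Delta\,\Phi_{N_m}|/|\Phi_{N_m}|$, which tends to $0$ for each fixed $h \in G$ because $N_m \to \infty$ and $(\Phi_N)$ is left-F\o lner.
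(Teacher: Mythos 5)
Your proof is correct, and its core mechanism is the same as the paper's: produce an almost-surely generic point in $Y$, then transport genericity to $x_0$ by approximating that point with orbit points $T_{g_j}x_0$, \emph{freezing} the averaging set before invoking the approximation, using continuity of the finite average in the base point, and passing to the right-translated sets $\Phi_{N_m}g_{j_m}$; your F\o lner verification $|hFg\,\Delta\,Fg| = |hF\,\Delta\,F|$ is exactly the paper's concluding observation. The one genuine divergence is the first step. The paper deliberately avoids the pointwise ergodic theorem: it applies the \emph{mean} ergodic theorem (proved as for Hilbert-space isometries, so self-contained in the amenable setting) to get $L^2$-convergence of the averages along an arbitrary F\o lner sequence, and then extracts a subsequence $(F_{N_k})$ along which convergence is pointwise a.e.\ for a countable dense family in $C(Y)$ — which suffices, since quasi-genericity only requires convergence along \emph{some} F\o lner sequence. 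You instead invoke Lindenstrauss's pointwise ergodic theorem along a tempered F\o lner sequence. Your route buys convergence along the full sequence with no subsequence extraction, at the cost of a substantially deeper input (temperedness and the pointwise theorem for amenable actions); the paper's route is more elementary and loses nothing for the statement at hand, since the subsequence is absorbed into the final F\o lner sequence $(F_{N_k}g_k)$ anyway. Both arguments are complete; your explicit attention to the order-of-limits issue (fixing $N_m$ before choosing $j_m$) is precisely the point the paper's proof handles implicitly by choosing $g_k$ after the $k$-th inequality is in place.
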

\begin{proof} Since $\mu$ is an ergodic measure, it follows by the mean ergodic theorem (which can be proved in the same way as the classical mean ergodic theorem for isometries of Hilbert spaces, see, for example, Theorem 4.15 in  \cite{bergelsonproblemas}), that for any F\o lner sequence $(F_N)$, and any $f \in L^2(\mu)$
	\[\lim_{N \to \infty}\frac{1}{|F_{N}|}\sum_{g \in F_{N}}f(T_gx)=\int_Y f \ d\mu,\]
where the convergence is in the $L^2(\mu)$-norm. Thus, there exists some subsequence $(F_{N_k})$ along which we have pointwise convergence for all $f$ in a countable dense subset of $C(Y)$, which in turn, by a simple triangle inequality argument, implies that, for any $f \in C(Y)$ we have
\[ \lim_{N \to \infty} \frac{1}{|F_{N_k}|}\sum_{g \in F_{N_k}} f(T_gx)=\int_X f \ d\mu,\]
for a.e. $x \in Y$ (and so a.e. $x \in Y$ is quasi-generic for $\mu$).
\\ \\
Let $x_1 \in Y$ be quasi-generic for $\mu$ along the F\o lner sequence $(F_{N_k})$. Take a countable dense set $\{f_k : k \in \N\}$ in $C(Y)$ such that
	\begin{equation}\left| \frac{1}{|F_{N_k}|}\sum_{g \in F_{N_k}}f_j(T_gx_1)-\int_Y f_j \ d\mu\right|<\frac{1}{k}\label{ineq1}\end{equation}
	for all $k \in \N$ and all $j=1,\dots,k$.
	Since the functions $f_j$ are continuous, we can pick $(g_k) \subseteq G$ such that the inequality \eqref{ineq1} holds if we replace $x_1$ by $T_{g_k}x_0$, which after a change of variables in the sum yields
	\[\left| \frac{1}{|F_{N_k}|}\sum_{g \in F_{N_k}g_k}f_j(T_gx_0)-\int_Y f_j \ d\mu\right|<\frac{1}{k}, \]
	which implies that
	\[ \lim_{k \to \infty} \frac{1}{|F_{N_k}g_k|}\sum_{g \in F_{N_k}g_k} f(T_gx_0)=\int_Y f \ d\mu\]
	for all $f \in C(Y)$. In other words, $x_0$ is a quasi-generic point for $\mu$ with respect to the F\o lner sequence $(F_{N_k}g_k)$. Indeed, observe that for all $g \in G$ we have that $|gF_{N_k}g_k \Delta F_{N_k}g_k|=|gF_{N_k} \Delta F_{N_k}|$, which implies that $(F_{N_k}g_k)$ is still a F\o lner sequence
\end{proof}
We are now ready to formulate and prove the amenable ergodic Furstenberg's Correspondence Principle, adapting arguments from both \cite{bhk} and \cite{bergelsonfish}. We note that Theorem \ref{intro1} corresponds to the special case $G:=\Z$ in Theorem \ref{fc}.  
\begin{theorem}[Ergodic Furstenberg Correspondence Principle for countable amenable groups; $d^*$ version]\label{fc} Let $E$ be a subset of a countable amenable group $G$ with positive upper Banach density. Then there exists an ergodic probability measure preserving system $(X,\mathcal{B},\mu,(T_g)_{g \in G})$ and a set $A \in \mathcal{B}$ with $\mu(A)=d^*(E)$ such that 
	\begin{equation}\label{fcineq}
	\mu((T_{g_1})^{-1}A^{w_1} \star \dots \star (T_{g_k})^{-1}A^{w_k}) \leq d^*(g_1^{-1}E^{w_1}\star \dots \star g_k^{-1}E^{w_k})
	\end{equation}
for all $k \in \N$, all $\{g_1,\dots,g_k\} \subset G$ and all $(w_1,\dots,w_k) \in \{0,1\}^k$, and where each of the stars denotes either union or intersection with the understanding that \begin{itemize}
\item[(i)] for all $1 \leq i \leq k-1$, the operation represented by $\star$ which stands between $E^{w_i}$ and $E^{w_{i+1}}$ is the same as the operation appearing between $A^{w_i}$ and $A^{w_{i+1}}$.
\item[(ii)] the choices of parentheses which are needed to make the expressions on both sides of formula \eqref{fcfolnerineq} well defined also match. 
\end{itemize}
\end{theorem}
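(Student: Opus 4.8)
The plan is to run the symbolic construction of Theorem \ref{fcfolner} and then break the resulting invariant measure into its ergodic pieces, selecting a piece that still sees the full density of $E$. First I would set up exactly as in the proof of Theorem \ref{fcfolner}: take $X=\{0,1\}^G$ with the shift action $(T_g)_{g\in G}$, let $\omega=\mathbb{1}_E$, and put $A=\{x\in X:x(e)=1\}$, a clopen set. The one change at the outset is the choice of F\o lner sequence. Since $d^*(E)=\sup_{(F_N)}\bar{d}_{(F_N)}(E)$, I would first invoke the standard fact that the upper Banach density is realized, i.e. that there is a F\o lner sequence $(F_N)$ with $\lim_{N\to\infty}|E\cap F_N|/|F_N|=d^*(E)$ (one builds it by a diagonal argument, selecting from F\o lner sequences with $\bar{d}_{(F_N)}(E)\to d^*(E)$ finite pieces that are simultaneously almost invariant under larger and larger finite subsets of $G$). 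Passing to a subsequence so that $\frac{1}{|F_N|}\sum_{g\in F_N}\delta_{T_g\omega}$ converges weak$^*$ to a $G$-invariant measure $\mu$, the computation in \eqref{weakconvergence} gives $\mu(A)=d^*(E)$, and $\mu$ is supported on the orbit closure $Y:=\overline{\{T_g\omega:g\in G\}}$.

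The heart of the argument is the passage to an ergodic component. I would apply the ergodic decomposition for actions of countable amenable groups to write $\mu=\int_{\mathcal{E}}\nu\,d\tau(\nu)$ over the ergodic $G$-invariant measures, noting that $\tau$-a.e.\ $\nu$ is again supported on the closed invariant set $Y$. The key claim is that $\nu(A)\le d^*(E)$ for $\tau$-a.e.\ $\nu$. This is exactly where Proposition \ref{orbit} and the flexibility of $d^*$ enter: since such a $\nu$ is ergodic and supported on $Y=\overline{\{T_g\omega:g\in G\}}$, the point $\omega$ is quasi-generic for $\nu$ along some F\o lner sequence $(F_N^{\nu})$, which in general differs from $(F_N)$; evaluating the convergence of the averages at the continuous function $\mathbb{1}_A$ yields $\nu(A)=\bar{d}_{(F_N^{\nu})}(E)\le d^*(E)$. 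Combining $\int\nu(A)\,d\tau(\nu)=\mu(A)=d^*(E)$ with $\nu(A)\le d^*(E)$ $\tau$-a.e.\ forces $\nu(A)=d^*(E)$ for $\tau$-a.e.\ $\nu$, so I may fix one such ergodic component and rename it $\mu$; the system $(Y,\mathcal{B},\mu,(T_g)_{g\in G})$ is then ergodic and satisfies $\mu(A)=d^*(E)$.

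To finish I would verify \eqref{fcineq} for this fixed ergodic measure using quasi-genericity a second time, now for arbitrary Boolean combinations. For any choice of $g_1,\dots,g_k$ and $(w_1,\dots,w_k)\in\{0,1\}^k$, the set $B=(T_{g_1})^{-1}A^{w_1}\star\dots\star(T_{g_k})^{-1}A^{w_k}$ is clopen, so $\mathbb{1}_B$ is continuous; since the single F\o lner sequence $(F_N^{\mu})$ coming from quasi-genericity works simultaneously for all of $C(Y)$, I obtain $\mu(B)=\bar{d}_{(F_N^{\mu})}(g_1^{-1}E^{w_1}\star\dots\star g_k^{-1}E^{w_k})\le d^*(g_1^{-1}E^{w_1}\star\dots\star g_k^{-1}E^{w_k})$, which is precisely \eqref{fcineq}, the matching of operations and parentheses being inherited verbatim from the translation between $X$-sets and $G$-sets already recorded in the proof of Theorem \ref{fcfolner}.

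I expect the main obstacle to be the claim $\nu(A)\le d^*(E)$ for the ergodic components, together with the bookkeeping of the ergodic decomposition (measurability of $\nu\mapsto\nu(A)$ and the fact that $\tau$-a.e.\ component is carried by $Y$). Conceptually, the crucial mechanism—and the reason the argument succeeds for $d^*$ but fails for $\bar{d}_{(F_N)}$—is that the ergodic component is quasi-generic only along a new F\o lner sequence of its own, and only $d^*$, being the supremum over all F\o lner sequences, is insensitive to this change. This is in exact agreement with the heuristic in the Introduction that for a density $\bar{d}_{(F_N)}$ tied to a fixed averaging scheme one cannot in general arrange ergodicity.
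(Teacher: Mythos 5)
Your proposal is correct and follows essentially the same route as the paper's proof: the symbolic system on $\{0,1\}^G$, a weak* limit of the empirical measures along a F\o lner sequence realizing $d^*(E)$, the ergodic decomposition, and Proposition \ref{orbit} to transfer all estimates via quasi-genericity of $\omega$ (the continuity of indicators of clopen Boolean combinations being the same mechanism in both). The only cosmetic difference is bookkeeping: the paper selects a single ergodic component with $\mu_z(A)\ge\nu(A)=d^*(E)$ and then obtains $\mu_z(A)\le d^*(E)$ as the $k=1$, $w_1=1$, $g_1=e$ case of \eqref{fcineq}, whereas you show $\nu(A)\le d^*(E)$ for $\tau$-a.e.\ component and squeeze through the integral identity --- the same two-sided estimate applied in a different order.
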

\begin{proof} We start as in the proof of Theorem \ref{fcfolner}. Let $X=\{0,1\}^G$ and $(T_g)_{g \in G}$ be the action of $G$ on $X$ by homeomorphisms defined by the formula $(T_gx)_{g_0}=x_{gg_0}$ for all $g, g_0 \in G$. Define, as before, $\omega \in X$ by setting $\omega(g)=1$ if $g \in E$ and $\omega(g)=0$ otherwise.
\\ \\	
Let $Y=\overline{\{T_g\omega : g \in G\}}$. Put $A=\{x \in Y: x(e)=1\}$. Then, $A$ is a clopen set in $Y$. Moreover, we have that $T_g\omega \in A$ if and only if $g \in E$.
Let $(F_N)$ be a F\o lner sequence such that $d^*(E)=\bar{d}_{(F_N)}(E)$. Let $\nu$ be any weak* limit point of the sequence of measures
	\[ \frac{1}{|F_N|} \sum_{g \in F_N} \delta_{T_g\omega}.\]
	Then, as in Theorem \ref{fcfolner}, $\nu$ is a $G$-invariant probability measure on $Y$ such that $\nu(A)=d^*(E)$.
\\ \\	
	By the ergodic decomposition theorem (see Theorem 4.2 in \cite{v}), there is a probability measure $\lambda$ on the set of ergodic normalized measures $\mathcal{M}_G(X)$ such that
	\begin{equation}\label{inequalitydstar} \nu(C)=\int_{\mathcal{M}_G(X)} \mu_{z}(C) \ d\lambda(z) \end{equation}
	for all $C \in \mathcal{B}$. It follows from the equality \eqref{inequalitydstar} that there exists some $z$ such that $\mu_{z}(A)\geq \nu(A)=d^*(E)$. 
	\\ \\
	We show that the measure $\mu_{z}$, which we will now denote by $\mu$, works. Let $g_1,\dots,g_k \in G$ and observe that the set \\ $(T_{g_1})^{-1}A^{w_1} \star \dots \star (T_{g_k})^{-1}A^{w_k}$ is a clopen set in $Y$, so its indicator function is continuous. 
	\\ \\
	By Proposition \ref{orbit}, there exists a F\o lner sequence $(G_N)$, with respect to which the point $\omega$ is quasi-generic for the measure $\mu$. This implies that
	\begin{multline}\mu((T_{g_1})^{-1}A^{w_1} \star \dots \star (T_{g_k})^{-1}A^{w_k})=\lim_{N \to \infty} \frac{1}{|G_N|}\sum_{g \in G_N} \mathbb{1}_{T_{g_1}^{-1}A^{w_1} \star \dots \star T_{g_k}^{-1}A^{w_k}} (T_g\omega)= \\
	\label{inequalitydstar2}\lim_{N \to \infty}\frac{1}{|G_N|} \left|(g_1^{-1}E^{w_1}\star \dots \star g_k^{-1}E^{w_k}) \cap G_N\right|\leq d^*(g_1^{-1}E^{w_1}\star \dots \star g_k^{-1}E^{w_k}).\end{multline} 
	In particular, letting $k=1, w_1=1$ and $g_1=e$ in inequality \eqref{inequalitydstar2} we obtain $\mu(A) \leq d^*(E)$. Recalling the previous inequality $\mu(A) \geq \nu(A)=d^*(E)$ we see that $\mu(A)=d^*(E)$, so we are done.
\end{proof}
We conclude this section with some comments on why the utilization of $d^*$ allows us to achieve in Theorem \ref{fc} the goal of ergodicity (whereas, as we saw above, there are sets $E$ for which Theorem \ref{fcfolner} cannot guarantee it). In both proofs, we start with a F\o lner sequence $(F_N)$ wich satisfies $\bar{d}_{(F_N)}(E)>0$ (in Theorem \ref{fcfolner}) or $d^*(E)=\bar{d}_{(F_N)}(E)$ (in Theorem \ref{fc}). Then we consider weak* limits of the sequences of measures $\frac{1}{|F_N|}\sum_{g \in F_N} \delta_{T_g\omega}$ ($\omega=(\mathbb{1}_E(g))_{g \in G}$) along a subsequence of $(F_N)$. The advantage of $d^*$ comes in handy when, after invoking the ergodic decomposition in the proof of Theorem \ref{fc}, we start using shifts $(F_{N_k}g_k)$ of a relevant subsequence $(F_{N_k})$ in order to use quasi-genericity of $\omega$, as guaranteed by Proposition \ref{orbit}. Unlike the proof of Theorem \ref{fcfolner} where we are restricted to a given F\o lner sequence $(F_N)$ and its subsequences, in the proof of Theorem \ref{fc}, we are conveniently passing to a different F\o lner sequence without affecting the value of $d^*$ for expressions of the form $g_1^{-1}E^{w_1} \star \dotso \star g_k^{-1}E^{w_k}$ \footnote{This property of $d^*$, when applied to unions, is also behind Hindman's proof of Theorem \ref{intro2}.}. 
\section{Hindman's Theorem via ergodic theory and some other consequences of the ergodic version of Furstenberg's correspondence principle}
In this section we will give a short proof of a natural generalization of Hindman's theorem to the context of countable amenable groups. Strictly speaking, Hindman's theorem, as formulated in the introduction (Theorem \ref{intro2}) deals with $(\N,+)$, which is a semigroup rather than a group, but the results of this section are easily adjusted so that they hold for countable cancellative amenable semigroups. See the explanatory remark at the end of the section (see also Section 6 for a general semigroup version). We also deduce other ergodic-flavored corollaries which provide additional evidence to our claim that $d^*$ is better suited for applications.
\\ \\
We begin with showing that a general version of Hindman's theorem (see Theorem \ref{hindmanoriginal} in the introduction) is an immediate corollary of Theorem \ref{fc}:
\begin{theorem}\label{hind1}
	Let $G$ be a countable amenable group, and let $E$ be a subset of $G$ with $d^*(E)>0$. Then, for every $\varepsilon>0$, there exists $k \in \N$ and $g_1,\dots,g_k \in G$ such that $d^*(g_1^{-1}E \cup \dots \cup g_k^{-1}E)>1-\ve$.
\end{theorem}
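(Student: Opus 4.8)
The plan is to derive Theorem \ref{hind1} directly from the ergodic correspondence principle (Theorem \ref{fc}) by transporting the problem into the measure-preserving system and exploiting ergodicity. First I would invoke Theorem \ref{fc} to obtain an \emph{ergodic} probability measure preserving system $(X,\mathcal{B},\mu,(T_g)_{g \in G})$ together with a set $A \in \mathcal{B}$ satisfying $\mu(A)=d^*(E)>0$ and the union form of inequality \eqref{fcineq}, namely
\[ d^*(g_1^{-1}E \cup \dots \cup g_k^{-1}E) \geq \mu\left( (T_{g_1})^{-1}A \cup \dots \cup (T_{g_k})^{-1}A \right) \]
for all $k \in \N$ and $g_1,\dots,g_k \in G$. (This union case is the specialization of \eqref{fcineq} obtained by taking every star to be a union and all $w_i=1$.) Thus it suffices to produce finitely many group elements $g_1,\dots,g_k$ for which the measure on the right exceeds $1-\ve$.

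The key measure-theoretic fact I would establish is that, because the system is ergodic and $\mu(A)>0$, the $G$-orbit of $A$ under the $(T_g)^{-1}$ essentially exhausts $X$ up to a null set: the set $B:=\bigcup_{g \in G} (T_g)^{-1}A$ is $G$-invariant (since the action is by a group, left-translating the index set $g$ merely permutes the union), measurable as a countable union, and satisfies $\mu(B) \geq \mu(A)>0$. By ergodicity, any $G$-invariant set has measure $0$ or $1$, so $\mu(B)=1$. Here I am using that $G$ is countable, so $B$ is a genuine countable union and hence measurable.

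Having $\mu(B)=1$, I would use continuity of the measure from below along an exhaustion of $G$ by finite sets. Since $G$ is countable, fix an enumeration $G=\{h_1,h_2,\dots\}$ and set $B_k:=\bigcup_{i=1}^{k}(T_{h_i})^{-1}A$. Then $B_k \uparrow B$, so $\mu(B_k) \to \mu(B)=1$. Consequently, given $\ve>0$, there is some $k \in \N$ with $\mu(B_k)=\mu\left( (T_{h_1})^{-1}A \cup \dots \cup (T_{h_k})^{-1}A \right) > 1-\ve$. Choosing $g_i:=h_i$ for $1 \le i \le k$ and feeding these into the union inequality above yields
\[ d^*(g_1^{-1}E \cup \dots \cup g_k^{-1}E) \geq \mu\left( (T_{g_1})^{-1}A \cup \dots \cup (T_{g_k})^{-1}A \right) > 1-\ve, \]
which is exactly the desired conclusion.

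The main obstacle, such as it is, lies not in the combinatorial core but in making sure the union form of \eqref{fcineq} is legitimately available and pointed in the useful direction: Theorem \ref{fc} gives $\mu(\cdots) \le d^*(\cdots)$, i.e. the measure \emph{lower-bounds} the density, which is precisely what is needed here (we want the density to be large, and we control the measure from below via ergodicity). I would double-check that taking all stars to be unions and all $w_i=1$ is an admissible instance of the theorem's hypotheses, which it plainly is. A secondary subtle point worth a sentence is the measurability and $G$-invariance of $B$ justifying the appeal to ergodicity; both are immediate from countability of $G$ and the group (as opposed to mere semigroup) structure of the action, so no real difficulty arises.
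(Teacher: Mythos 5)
Your proof is correct and takes essentially the same route as the paper's: invoke Theorem \ref{fc} in its union form, observe that ergodicity forces $\mu\left(\bigcup_{g \in G} (T_g)^{-1}A\right)=1$, and use countability of $G$ together with continuity of $\mu$ from below to extract a finite subunion of measure exceeding $1-\ve$. The paper compresses your invariance/exhaustion details into the single remark that the result follows by ergodicity ``since $G$ is countable,'' so your write-up simply makes those routine verifications explicit.
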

\begin{proof} By Theorem \ref{fc} there exists an ergodic system $(X,\mathcal{B},\mu,(T_g)_{g \in G})$ and a set $A \in \mathcal{B}$ such that $\mu(A)=d^*(E)$ and for which 
	\[ d^*(h_1^{-1}E \cup \dots \cup h_k^{-1}E) \geq \mu((T_{h_1})^{-1}A \cup \dots \cup (T_{h_k})^{-1}A)\]
	for all $k \in \N$ and $h_1,\dots,h_k \in G$. By ergodicity of the action $(T_g)_{g \in G}$ we have
	\[\mu\left(\bigcup_{g \in G} (T_g)^{-1}A\right)=1,\]
	and so since $G$ is countable, the result follows.
\end{proof}
Another corollary that can be obtained from Theorem \ref{fc}, is an ergodicity-like statement for the group $G$. Namely:
\begin{corollary}\label{corsec3} Let $G$ be a countable amenable group, and let $E \subseteq G$ be such that $d^*(E) \in (0,1)$. Then there exists some $g \in G$ such that $d^*(E^c \cap g^{-1}E)>0$.
\end{corollary}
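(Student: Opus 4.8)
The plan is to argue by contradiction using the ergodic correspondence principle (Theorem \ref{fc}) together with the general $\bar{d}_{(F_N)}$ machinery, exploiting the fact that $d^*(E) \in (0,1)$ forces both $A$ and its complement to carry positive measure. Suppose, toward a contradiction, that $d^*(E^c \cap g^{-1}E) = 0$ for every $g \in G$. First I would invoke Theorem \ref{fc} to produce an ergodic system $(X,\mathcal{B},\mu,(T_g)_{g \in G})$ and a set $A \in \mathcal{B}$ with $\mu(A) = d^*(E)$ satisfying the inequality \eqref{fcineq}. Specializing \eqref{fcineq} to $k=2$, $(w_1,w_2)=(0,1)$, with $g_1 = e$ and $g_2 = g$, and choosing the star to be intersection, gives
\[
\mu(A^c \cap (T_g)^{-1}A) \leq d^*(E^c \cap g^{-1}E) = 0
\]
for all $g \in G$ (using $A = A^1$, $A^c = A^0$, and $T_e = \mathrm{id}$).

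The heart of the argument is then to deduce that this family of vanishing measures contradicts ergodicity. From $\mu(A^c \cap (T_g)^{-1}A) = 0$ for all $g$, I would conclude that up to measure zero, $(T_g)^{-1}A \subseteq A$ for every $g \in G$. Since $G$ is countable, the set $B := \bigcap_{g \in G} (T_g)^{-1}A$ agrees with $A$ up to a $\mu$-null set (each inclusion $(T_g)^{-1}A \subseteq A$ holds mod null, so intersecting over the countable group $G$ removes only a null set), and $B$ is genuinely $G$-invariant: for any $h \in G$, $(T_h)^{-1}B = \bigcap_{g \in G}(T_{gh})^{-1}A = B$ since $gh$ ranges over all of $G$. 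Thus $B$ is an honestly invariant set with $\mu(B) = \mu(A) = d^*(E) \in (0,1)$.

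The contradiction now follows immediately from ergodicity: an ergodic system admits no invariant set of measure strictly between $0$ and $1$, yet $\mu(B) = d^*(E) \in (0,1)$. Hence the assumption $d^*(E^c \cap g^{-1}E) = 0$ for all $g$ is untenable, and there must exist some $g \in G$ with $d^*(E^c \cap g^{-1}E) > 0$, as claimed. The main technical point to handle carefully is the passage from the pointwise-in-$g$ vanishing $\mu(A^c \cap (T_g)^{-1}A) = 0$ to a single genuinely invariant set; this is where countability of $G$ is essential, so that the intersection over all group elements is a measurable operation that only discards a null set. Everything else is a direct specialization of \eqref{fcineq} and the definition of ergodicity, so I expect no serious obstacle beyond bookkeeping with the $A^0/A^1$ notation.
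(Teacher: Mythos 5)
Your proof is correct and takes essentially the same route as the paper: both apply Theorem \ref{fc} with the complement option of \eqref{fcineq} to get $\mu(A^c \cap (T_g)^{-1}A) \leq d^*(E^c \cap g^{-1}E)$ and then conclude from ergodicity, the only difference being that the paper directly invokes the standard fact that an ergodic action with $\mu(A), \mu(A^c) > 0$ admits some $g$ with $\mu(A^c \cap (T_g)^{-1}A) > 0$, while you prove that fact by constructing the invariant set $B$. One small patch: the one-sided inclusions $(T_g)^{-1}A \subseteq A$ mod null do not by themselves show $\mu(A \setminus B) = 0$; you need measure preservation, $\mu((T_g)^{-1}A) = \mu(A)$, to upgrade each inclusion to an equality mod null, after which your countable-intersection argument goes through.
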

\begin{proof}
	Let $E \subseteq G$ be such that $d^*(E) \in (0,1)$. By Theorem \ref{fc} we can find an ergodic measure preserving system $(X,\mathcal{B},\mu,(T_g)_{g \in G})$ and a set $A \in \mathcal{B}$ such that $\mu(A)=d^*(E)$ and for all $h \in G$,
	\[ d^*(E^c \cap h^{-1}E) \geq \mu(A^c \cap (T_h)^{-1}A).\]
	Since $\mu(A)>0$ and $\mu(A^c)>0$ and the action $(T_g)_{g \in G}$ is ergodic, there is some $g \in G$ such that $\mu(A^c \cap (T_g)^{-1}A)>0$, so we are done.
\end{proof}
We proceed to show that an example similar to \eqref{hindmanexample} exists in the context of countable abelian groups.
	
\begin{proposition}\label{generalizedhindmanex}
	Let $G$ be a countably infinite abelian group. Let $(F_N) \subseteq G$ be a  F\o lner sequence. Then, there exists a set $E \subseteq G$ such that
	\begin{equation}\label{hindmanexabelian} \bar{d}_{(F_N)}(E)>0, \textrm{ and } \bar{d}_{(F_N)} \left( \bigcup_{g \in F} g^{-1}E \right)< \frac{3}{4},\end{equation}
	for all finite sets $F \subseteq G$.
\end{proposition}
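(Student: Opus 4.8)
The plan is to arrange that, \emph{within each large F\o lner set} $F_N$, the set $E$ looks like an approximately balanced union of ``tiles'', a tile being a translate of a finite F\o lner set $Q_N \subseteq G$ whose cardinality tends to $\infty$ but stays $o(|F_N|)$. Everything is then driven by one mechanism. If one colours (about) half of the tiles meeting $F_N$ as lying in $E$ and the other half as lying in $E^c$, then $|F_N \cap E|/|F_N| \to \frac{1}{2}$, which already gives $\bar d_{(F_N)}(E) = \frac{1}{2} \in (0, \frac{3}{4})$, so the first condition in \eqref{hindmanexabelian} holds. For the second, fix a finite set $F \subseteq G$. I would choose the tiles so that for all large $N$ the tile shape $Q_N$ \emph{absorbs} $F$; then each $g \in F$ sends tiles to tiles up to a boundary of size $o(|F_N|)$, so that $\bigcup_{g \in F} g^{-1}E$ agrees with $E$ on $F_N$ outside a set of size $o(|F_N|)$. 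Hence $\bar d_{(F_N)}\big(\bigcup_{g \in F} g^{-1}E\big) = \bar d_{(F_N)}(E) = \frac{1}{2} < \frac{3}{4}$, which is the second condition. In this way both inequalities in \eqref{hindmanexabelian} reduce to producing a tile structure adapted to the given sequence $(F_N)$.

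To produce the tiles I would exploit the structure of countable abelian groups. When $G$ is torsion one has $G = \bigcup_n G_n$ with $G_n$ an increasing chain of \emph{finite} subgroups, and I would take $Q_N = G_{n(N)}$ with cosets of $G_{n(N)}$ as tiles: since $F_N$ is F\o lner it is, for large $N$, almost invariant under any fixed $G_{n(N)}$, hence almost a disjoint union of $G_{n(N)}$-cosets, and for $F \subseteq G_{n(N)}$ every $g \in F$ fixes each coset exactly. In the general case one replaces cosets of finite subgroups by the tiles of a F\o lner tiling tower, whose existence for countable abelian groups follows from the standard structure theory (every finite subset lies in a finitely generated, hence $\Z^r \oplus \text{finite}$, subgroup that both tiles $G$ and is tiled by smaller tiles). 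The colouring ``half the tiles meeting $F_N$ lie in $E$'' must be made simultaneously for all $N$, which I would do by a diagonalisation over $N$, processing the $F_N$ in turn and only refining or respecting previously coloured tiles so that the global definition of $E$ stays consistent.

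The hard part will be the choice of the scale $n(N)$ (equivalently the tile shape $Q_N$). It must satisfy two competing demands: on the one hand $Q_N$ must grow, so that every fixed finite $F$ is eventually absorbed ($F \subseteq Q_N$ for all large $N$) --- this is exactly what makes the union $\bigcup_{g \in F} g^{-1}E$ collapse back to $E$ and is indispensable for the second inequality; on the other hand $Q_N$ must stay small relative to $F_N$, i.e. $|Q_N| = o(|F_N|)$ with $F_N$ almost $Q_N$-invariant, so that $F_N$ contains many tiles, the colouring is genuinely balanced, and --- crucially --- no $F_N$ can sit inside a single monochromatic tile (which would drive its $E$-density to $0$ or $1$ and destroy the band $(0, \frac{3}{4})$; this is the abelian analogue of the phenomenon that Hindman's interval example fails for adversarial F\o lner sequences). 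These demands are reconcilable because any fixed $F$ is absorbed at a \emph{fixed} finite scale while $|F_N| \to \infty$, leaving a usable window of scales; carrying out this reconciliation, while keeping the colouring consistent across overlapping F\o lner sets and supplying the approximate tilings of the individual $F_N$, is the technical heart of the argument.
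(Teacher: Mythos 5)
Your mechanism is the right one, and your two ``competing demands'' are exactly the relevant constraints: tiles must become invariant under any fixed finite $F$ (so that $\bigcup_{g\in F}g^{-1}E$ collapses back to $E$ up to boundary terms), while staying of size $o(|F_N|)$ (so that the colouring is balanced inside each $F_N$). But the argument stops at the decisive step, which you yourself label the ``technical heart'': producing \emph{one} global set $E$ whose colouring is simultaneously nearly balanced along \emph{every} member of the given sequence $(F_N)$. This is a genuine gap, not a routine verification, and the one concrete scheme you indicate for it --- ``processing the $F_N$ in turn and only refining or respecting previously coloured tiles'' --- is insufficient as stated. Balance of the stage-$N$ colouring within $F_N$ gives no control of its trace on a later set $F_M$: the intersection $F_M\cap F_N$ can be an essentially arbitrary sub-union of your stage-$N$ tiles (in $G=\Z$, for instance, a union of many intervals of length $q_N$ is itself a legitimate member of a F\o lner sequence once $q_N\to\infty$), so a later $F_M$ may meet the already-coloured region in a badly unbalanced set, driving $\bar{d}_{(F_M)}(E)$, and a fortiori $\bar{d}_{(F_M)}\bigl(\bigcup_{g\in F}g^{-1}E\bigr)$, above $\tfrac34$; and since you have committed to ``respecting'' the old tiles, stage $M$ offers no room to correct this. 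Thus the colouring must be chosen from the outset with quantitative foresight about all later intersections (e.g.\ via scales $q_N$ chosen so that every $F_M$ meets many tiles together with a random colouring and a Borel--Cantelli concentration argument, or via a genuine quantitative tiling theorem); no such mechanism is supplied, and both inequalities in \eqref{hindmanexabelian} --- including your claim $\bar{d}_{(F_N)}(E)=\tfrac12$ --- rest on it.

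For comparison, the paper does not tile the given sets $F_N$ at all. It first writes down an explicit Hindman-style set for one concrete F\o lner sequence adapted to a generating set: blocks $A_N$ in which the exponent of an infinite-order generator runs over a dyadic top range such as $[2^{2N-1},2^{2N}]$ (with a variant using even-indexed generators and a density calibration $b_i$ in the torsion case), so that the doubly exponential separation of scales makes all earlier blocks negligible inside $G_N$, while finitely many shifts fatten each block only by $O(1)$ in the distinguished coordinate. It then transfers the construction to an arbitrary $(F_N)$ via Fact~\ref{folnersequenceschange} (Lemma 4.1 of \cite{dhz}), which replaces a F\o lner sequence by an equivalent one assembled from $\varepsilon_N$-disjoint translates of the given sets --- precisely the quantitative tiling input your sketch presupposes but does not provide. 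So your outline identifies the correct phenomenon, and a completion along your lines is plausible, but as submitted it is a programme rather than a proof: the global consistency step is missing, and the greedy diagonalisation proposed in its place does not maintain the invariant actually needed.
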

\begin{proof} 
We will show first that the assertion of the proposition holds for a particular F\o lner sequence, and then upgrade it to an arbitrary F\o lner sequence. 
\\ \\
Assume first that $G$ is finitely generated and $\{g_1,\dots,g_k\}$ generate $G$. Then, one of the generators of $G$, say $g_1$ has infinite order. Consider the F\o lner sequence \\ $G_N:=\{g_1^{a_1}\dotso g_k^{a_N} : 0 \leq a_i \leq 2^{2N} \textrm{ for } 1 \leq i \leq  N\}$. Define \\ $A_N=\{g_1^{a_1}\dotso g_k^{a_N} : 2^{2N-1} \leq a_1 \leq 2^{2N} \textrm{ and } 0 \leq a_i \leq 2^{2N} \textrm{ otherwise }\}$ and put $E=\bigcup_{N \in \N} A_N$. It is not hard to check that the set $E$ satisfies \eqref{generalizedhindmanex}. 
\\ \\
Assume now that $G$ is infinitely generated, and let $\{g_n :n\in \N\}$ be a set of generators for the group $G$. We distinguish two cases. In the first case, one of the generators, say $g_1$ has infinite order. Consider the F\o lner sequence
	\[ G_N:=\{g_1^{a_1}g_2^{a_2}\cdot \dotso \cdot g_N^{a_N} : 0 \leq a_i \leq 2^{2N}, \textrm{ for }1 \leq i \leq N\},\]
	and set
	\begin{equation}A_N:=\{g_1^{a_1}g_2^{a_2}\cdot \dotso \cdot g_N^{a_N} : 2^{N} \leq a_1 \leq 2^{2N}, \textrm{ and } 0 \leq a_i \leq 2^{2N}\textrm{ for } 2 \leq i \leq N\}.\end{equation}
	Letting $E:=\bigcup_{N\in \N} A_N$, we get
	\[\bar{d}_{(G_N)}(E)=\bar{d}_{(G_N)}\left(\bigcup_{g \in F} g^{-1}E\right)<\frac{3}{4},\]
	for all finite sets $F \subseteq G$.
	\\ \\
	Now assume that all elements of $G$ have finite order and that the enumeration of $(g_n)$ is such that $\textrm{ord}(g_{n+1})\geq \textrm{ord}(g_n)$ for all $n \in \N$. Consider the F\o lner sequence
	\[ G_N:=\{g_1^{a_1}\cdot\dotso\cdot g_{2^{2N}}^{a_N}: 0 \leq a_i \leq \textrm{ord}(g_i)\textrm{ for all } 1 \leq i \leq 2^{2N}\}\]
	and the sets
	\[ A_N:=\{g_1^{a_1}\cdot\dotso\cdot g_{2^{2N}}^{a_N}: 0 \leq a_i \leq b_i\textrm{ for all } 1 \leq i \leq 2^{2N},\ \textrm{with }i \textrm{ even; } a_i=0 \textrm{ with }i\textrm{ odd }\},\]
	where $b_i$ is chosen so that $\frac{\left|(\bigcup_{j=1}^i A_j) \cap G_i\right|}{|G_i|} \in (\frac{1}{4},\frac{1}{2})$.
	Letting $E:=\bigcup_{N\in \N}A_N$, we get that 
	\[\bar{d}_{(G_N)}(E)\geq \frac{1}{4}, \quad \textrm{ and }\quad  \bar{d}_{(G_N)}\left(\bigcup_{g \in F} g^{-1}E\right)<\frac{3}{4},\]
	for all finite subsets $F \subseteq G$.
	\\ \\
Let now $(F_N)$ be an arbitrary F\o lner sequence. We indicate next how to construct the set $E$ that satisfies \eqref{hindmanexabelian} for $(F_N)$. To do this, we need the following general fact, which follows from Lemma 4.1 in \cite{dhz} (we thank Tomasz Downarowicz for providing this information).
\begin{fact}\label{folnersequenceschange}
Let $\varepsilon>0$. We say that a family of finite sets $\{A_n : n \in \N\}$ is $\varepsilon$-disjoint if each set $A_n$ admits a subset $A_n'$ such that $|A_n'|\geq (1-\varepsilon)|A_n|$, and such that the new family $\{A_n' : n \in \N\}$ is disjoint. Fix a F\o lner sequence $(F_N)$ and let $(\varepsilon_N)$ be a sequence of positive numbers with $\varepsilon_N \to 0$ as $N \to \infty$. Given another F\o lner sequence $(G_N)$, we can find a F\o lner sequence $(G_N')$ that is equivalent to $(G_N)$ (i.e., satisfying $\frac{|G_N \Delta G_N'|}{|G_N|} \to 0$ as $N \to \infty$) such that for each $N \in \N$, $G_N'$ is a union of $\varepsilon_N$-disjoint subsets of the form $\{F_Ng : g \in G\}$.
\end{fact}
It is not hard to see that Fact \ref{folnersequenceschange} implies that the set $E$ in question can be constructed with the help of the same argument utilized above for the special F\o lner sequence $(G_N)$.
\end{proof}
\begin{remark}
For given $0<a \leq b<1$, one can construct sets $E \subseteq \Z$ with $\bar{d}(E)=a$ such that $\bar{d}(\bigcup_{i=1}^N (E-i))=b$, for all large enough $N$. This can be done as follows. Start with a sequence of disjoint intervals $[a_n,b_n]$ such that the set $A:=\bigcup_{n \in \N} [a_n,b_n]$ satisfies $\bar{d}(A)=b$ and $\bar{d}(\bigcup_{i=1}^N (A-i))=b$ for all $N \in \N$ (this can be done by imitating Hindman's construction for $b=\frac{2}{3}$). Now let $\beta=\frac{b}{a}$ and let $E:=A \cap \{ \lfloor n\beta \rfloor : n \in \N\}$. One easily checks that $E$ satisfies the required properties.
\end{remark}
\begin{remark}
It is worth noting that the set $E$ constructed in Proposition \ref{generalizedhindmanex} also satisfies (as in Hindman's original example) 
\begin{equation}\label{hindmanequality}
\bar{d}_{(F_N)}(E^c \cap g^{-1}E)=0
\end{equation}
for all $g \in G$.
\end{remark}
It is of interest to know whether the phenomenon exhibited in Proposition \ref{generalizedhindmanex} takes place in non-commutative amenable groups. We cannot show this in complete generality, but for virtually nilpotent groups, we have the following theorem:
\begin{theorem}
Let $G$ be a finitely generated, countable virtually nilpotent group. Let $(F_N)$ be a F\o lner sequence. Then, there exists a set $E \subseteq G$ with 
\begin{equation}\label{nilpotent} \bar{d}_{(F_N)}(E)>0 \textrm{ and } \bar{d}_{(F_N)}\left(\bigcup_{g \in B} g^{-1}E \right)<\frac{3}{4}, \end{equation}
for all finite subsets $B \subseteq G$.
\end{theorem}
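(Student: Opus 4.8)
The plan is to imitate the proof of Proposition \ref{generalizedhindmanex}: first construct a set $E$ that works for one conveniently chosen F\o lner sequence, and then upgrade to the prescribed $(F_N)$ by means of Fact \ref{folnersequenceschange}. The genuinely new difficulty is that a finitely generated infinite virtually nilpotent group is no longer built from a single $\Z$-direction, so I would \emph{import} such a direction from a finite-index subgroup. Concretely, pick a finite-index \emph{normal} nilpotent subgroup $N \trianglelefteq G$ (replace any finite-index nilpotent subgroup by its normal core). As $G$ is infinite, $N$ is an infinite finitely generated nilpotent group, hence has infinite abelianization, so there is a surjective homomorphism $\psi \colon N \to \Z$. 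Fix coset representatives $x_1 = e, x_2, \dots, x_m$ for $G/N$.

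Next I would set up coordinates adapted to $\psi$. Choose a Mal'cev-type generating tuple $u_1, \dots, u_d$ of $N$ with $\psi(u_1)=1$ and $u_2, \dots, u_d \in \ker\psi$, so that every $u \in N$ is written uniquely as $u = u_1^{a_1}\cdots u_d^{a_d}$ and, because $\psi$ is a homomorphism into an abelian group, $\psi(u)=a_1$. Boxes $F_R^N := \{u_1^{a_1}\cdots u_d^{a_d} : 0 \le a_j < M_j(R)\}$, with the side lengths $M_j(R)$ chosen with the appropriate nilpotent weights so that $(F_R^N)$ is a F\o lner sequence of $N$ and with $M_1(R)$ running through the block endpoints $2^{2R+1}$, then form a F\o lner sequence in which $\psi$ is literally the first coordinate. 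Letting $\mathcal{H} := \bigcup_{n}[2^{2n}, 2^{2n+1})$ be Hindman's dyadic block set, I define $E := \psi^{-1}(\mathcal{H}) \subseteq N \subseteq G$ and induce a F\o lner sequence on $G$ by $G_R := \bigsqcup_{i=1}^m x_i F_R^N$; since left multiplication permutes the cosets and carries each $x_i F_R^N$ to (approximately) some $x_j F_R^N$, the sequence $(G_R)$ is indeed a left F\o lner sequence of $G$.

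The heart of the argument is the density bookkeeping, carried out coset by coset using the \emph{left} parametrization $x = x_i u$ of $C_i := x_i N$. For a finite $B \subseteq G$ and a target coset $C_i$, only the $g \in B$ with $g \in x_i^{-1}N$ contribute to $\bigcup_{g \in B} g^{-1}E$ inside $C_i$, and for such $g$ one has $g x_i \in N$ and $\psi(g x_i u) = \psi(g x_i) + \psi(u) = \psi(g x_i) + a_1$. Hence, in the coordinate $a_1$, the trace on $C_i$ is exactly $\{u : a_1 \in \bigcup_g (\mathcal{H} - \psi(g x_i))\}$, a \emph{bounded} thickening of $\mathcal{H}$ in the single direction $\psi$. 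Its density inside the box $x_i F_R^N$ equals $|\mathcal{H}' \cap [0, M_1(R))| / M_1(R)$, whose $\limsup$ is $\frac{2}{3}$, since thickening Hindman's blocks by a bounded amount does not change the upper density. Averaging over the $m$ cosets gives $\bar{d}_{(G_R)}\big(\bigcup_{g \in B} g^{-1}E\big) \le \frac{2}{3} < \frac{3}{4}$, while taking $B=\{e\}$ and the single contributing coset $C_1$ gives $\bar{d}_{(G_R)}(E) = \frac{2}{3m} > 0$, establishing \eqref{nilpotent} along $(G_R)$.

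Finally, exactly as in Proposition \ref{generalizedhindmanex}, Fact \ref{folnersequenceschange} lets me rerun this block construction with the prescribed F\o lner sequence $(F_N)$ playing the role of the tile, producing a set $E$ with the required properties along $(F_N)$. I expect the main obstacle to be precisely the non-commutativity: a priori, translating by an element outside $N$ permutes the cosets and could both reflect and skew the block pattern, as the twisted homomorphism $\psi \circ c_g$ on $N^{ab}$ suggests. The key observation that dissolves this difficulty is that, read in the left parametrization $x = x_i u$ and using that $\psi$ is a homomorphism on $N$, every such translate reappears inside its target coset as a bounded shift of $\mathcal{H}$ in the \emph{same} coordinate $a_1$; no transverse directions are ever created, so the single-direction upper-density bound $\frac{2}{3}$ survives the passage from $N$ to $G$.
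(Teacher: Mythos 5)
Your proposal is correct in outline, but it takes a genuinely different route from the paper. The paper's proof (given only as a sketch) is growth-theoretic: it takes the word-ball F\o lner sequence $F_N=\bigcup_{j=1}^{2^{2N}}F^j$ (balls are F\o lner because virtually nilpotent groups have polynomial growth) and lets $E$ be the union of the word-metric annuli $A_N$ of radii between $2^{2N-1}$ and $2^{2N}$, so that Hindman's block pattern is reproduced \emph{radially} and each translate $g^{-1}E$ sits inside boundedly thickened annuli. You instead fiber $G$ over $\Z$ through a surjection $\psi\colon N\to\Z$ from a finite-index normal nilpotent subgroup (which exists since an infinite finitely generated nilpotent group has infinite abelianization) and pull back the one-dimensional Hindman set; your coset bookkeeping is right --- by normality only $g\in x_i^{-1}N$ contributes to the trace on $x_iN$, where $g^{-1}E$ reads as $\{a_1\in\mathcal{H}-\psi(gx_i)\}$ --- and $G_R=\bigsqcup_i x_iF_R^N$ is indeed left F\o lner once $(F_R^N)$ is F\o lner in $N$. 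The comparison is instructive. The paper's route needs no subgroup structure but leans on the asymptotics $|B(n)|\sim cn^D$ (Pansu); moreover, with the paper's literal parameters the density of $E$ itself along the balls of radius $2^{2N}$ tends to $2^D/(2^D+1)$, which already exceeds $\frac{3}{4}$ once the growth degree satisfies $D\ge 2$, so the annuli there must be recalibrated (inner radius $\theta\, 2^{2N}$ with $1-\theta^D$ small) to obtain the stated bound. Your fibered construction avoids this calibration entirely: the pattern lives in a single coordinate and the transverse directions are full boxes, so the computation reduces exactly to the one-dimensional Hindman computation, yielding $\frac{2}{3}<\frac{3}{4}$ uniformly in the group, and the argument visibly extends to any amenable group possessing a finite-index subgroup that surjects onto $\Z$. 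Two glossed points you should repair: (i) unique Mal'cev-type coordinates require torsion-freeness, so either pass first to a torsion-free finite-index subgroup (and then to its normal core) or use a polycyclic series refining $\ker\psi\le N$, which gives unique coordinates with $\psi=a_1$ regardless of torsion; and (ii) since your basis is adapted to $\ker\psi$ rather than to the lower central series, ``nilpotent weights'' is not the correct justification for the boxes being F\o lner --- what works is to choose the inner box $K_R\subseteq\ker\psi$ to be $(S_R,\varepsilon_R)$-invariant for the finite set $S_R$ of conjugates $u_1^{-a}hu_1^{a}$ with $0\le a<2^{2R+1}$ and $h$ ranging over generators of $\ker\psi$; this anisotropic choice preserves the product structure on which your exact counting relies. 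Your final transfer to the prescribed $(F_N)$ via Fact \ref{folnersequenceschange} is asserted at exactly the same level of detail as in the paper's Proposition \ref{generalizedhindmanex}, so there is no discrepancy there.
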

\begin{proof}[Sketch of the proof]
First, by Fact \ref{folnersequenceschange}, it suffices to show the result for a particular F\o lner sequence $(F_N)$ of our choosing. Let $F:=\{x_1,\dots,x_k\}$ be a set of generators for $G$. Then, the sequence $\left(\bigcup_{j=1}^nF^j\right)$ (i.e. words of length at most $n$ generated by $F$) is a F\o lner sequence of polynomial growth. 
\\ \\
Let $F_N:= \bigcup_{j=1}^{2^{2N}}F^{j}$, and let $A_N$ the set of words in $F$ of length between $2^{2N-1}$ and $2^{2N}$. Then the set $E:=\bigcup_{N \in \N} A_N$ satisfies \eqref{nilpotent}.
\end{proof}
We will describe now another interesting application of Theorem \ref{fc}
\begin{definition}
Let $E \subseteq G$. We denote by $\mathcal{A}_E$ the algebra of sets generated by shifts of the set $E$, i.e. sets of the form $\{g^{-1}E : g \in G\}$ with the help of the operations of union, intersection and complement.
\end{definition}
\begin{theorem}\label{dstaraverage} Let $E \subseteq G$ be such that $d^*(E)>0$. Then, there exists a F\o lner sequence $(G_N)_{N \in \N}$ in $G$, such that for all F\o lner sequences $(F_N)_{N \in \N}$, for all $E_1, E_2 \in \mathcal{A}_E$, we have
	\begin{equation}\label{dstarequality} \lim_{N\to \infty}\frac{1}{|F_N|}\sum_{g \in F_N} d_{(G_K)}(E_1 \cap g^{-1}E_2)=
	d_{(G_K)}(E_1)d_{(G_K)}(E_2).\end{equation}
Moreover, if we let $E_1=E_2=E$ in \eqref{dstarequality}, we get
\begin{equation}\label{dstarequality2}
     \lim_{N\to \infty}\frac{1}{|F_N|}\sum_{g \in F_N} d_{(G_K)}(E \cap g^{-1}E)=d_{(G_K)}(E)^2=d^*(E)^2.
\end{equation}
\end{theorem}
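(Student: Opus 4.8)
The plan is to feed $E$ through the ergodic correspondence principle of Theorem \ref{fc}, convert the combinatorial averages in \eqref{dstarequality} into genuine ergodic averages, and then close the argument with the mean ergodic theorem. The Følner sequence $(G_N)$ must be chosen \emph{once and for all}, depending only on $E$, before quantifying over $(F_N)$, so the first task is to pin it down.

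First I would apply (the proof of) Theorem \ref{fc} to $E$. This produces $Y=\overline{\{T_g\omega : g \in G\}}\subseteq\{0,1\}^G$ with $\omega=(\mathbb{1}_E(g))_{g\in G}$, an ergodic $G$-invariant measure $\mu$ on $Y$, and the clopen set $A=\{x\in Y : x(e)=1\}$ with $\mu(A)=d^*(E)$. By Proposition \ref{orbit}, $\omega$ is quasi-generic for $\mu$ along some Følner sequence, and I take this to be the desired $(G_N)$. The payoff of choosing $(G_N)$ this way is that every $F\in\mathcal{A}_E$ corresponds to a clopen subset $\widetilde F\subseteq Y$ whose indicator is continuous; hence quasi-genericity upgrades the $\limsup$ defining $\bar d_{(G_N)}(F)$ to a genuine limit, giving $d_{(G_K)}(F)=\mu(\widetilde F)$ for \emph{every} $F\in\mathcal{A}_E$ simultaneously. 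In particular $d_{(G_K)}$ is well defined on the whole algebra.

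Next I would record the dictionary supplied by the correspondence. For $E_1,E_2\in\mathcal{A}_E$ and any $g\in G$, the set $E_1\cap g^{-1}E_2$ again lies in $\mathcal{A}_E$ (each $g^{-1}E_2$ is a Boolean combination of shifts $(hg)^{-1}E$ of $E$), and its clopen model is $A_1\cap T_g^{-1}A_2$, where $A_1,A_2$ correspond to $E_1,E_2$ as in the proof of Theorem \ref{fc}. Thus $d_{(G_K)}(E_1\cap g^{-1}E_2)=\mu(A_1\cap T_g^{-1}A_2)$ and $d_{(G_K)}(E_i)=\mu(A_i)$. Substituting and using $\mathbb{1}_{T_g^{-1}A_2}=\mathbb{1}_{A_2}\circ T_g$, the left-hand average in \eqref{dstarequality} becomes, for an arbitrary Følner sequence $(F_N)$, the expression $\int_Y \mathbb{1}_{A_1}\cdot\bigl(\tfrac{1}{|F_N|}\sum_{g\in F_N}\mathbb{1}_{A_2}\circ T_g\bigr)\,d\mu$.

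Finally I would invoke the mean ergodic theorem (in the amenable form already used in the proof of Proposition \ref{orbit}) for the ergodic system $(Y,\mu,(T_g)_{g\in G})$ along the given $(F_N)$: the average $\tfrac{1}{|F_N|}\sum_{g\in F_N}\mathbb{1}_{A_2}\circ T_g$ converges in $L^2(\mu)$ to the constant $\mu(A_2)$, precisely because ergodicity forces the invariant functions to be constants. Since $\mathbb{1}_{A_1}$ is bounded, Cauchy--Schwarz lets me pass to the limit inside the integral, yielding $\int_Y \mathbb{1}_{A_1}\cdot\mu(A_2)\,d\mu=\mu(A_1)\mu(A_2)=d_{(G_K)}(E_1)\,d_{(G_K)}(E_2)$, which is \eqref{dstarequality}; the display \eqref{dstarequality2} is then the special case $E_1=E_2=E$ together with $\mu(A)=d^*(E)$. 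The main obstacle is conceptual rather than computational: it is the verification that $(G_N)$ can be chosen so that $d_{(G_K)}$ is a genuine limit on \emph{all} of $\mathcal{A}_E$ (exactly what quasi-genericity buys) and the observation that it is \emph{ergodicity}, not mere invariance, that forces the ergodic average to factor as the product of the two densities.
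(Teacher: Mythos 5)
Your proposal is correct and follows essentially the same route as the paper: invoke (the proof of) Theorem \ref{fc} to get the ergodic system, take $(G_N)$ to be the quasi-genericity F\o lner sequence from Proposition \ref{orbit} so that $d_{(G_K)}$ agrees with $\mu$ on the clopen models of all of $\mathcal{A}_E$, and then close with the amenable mean ergodic theorem along the arbitrary $(F_N)$. If anything, you spell out two points the paper compresses --- the once-and-for-all choice of $(G_N)$ before quantifying over $(F_N)$ and $E_1,E_2$, and the $L^2$/Cauchy--Schwarz step behind ``ergodicity immediately yields \eqref{dstarequality}'' --- so no changes are needed.
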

\begin{proof}
	Let $E \subseteq G$ with $d^*(E)>0$ and $E_1, E_2 \in \mathcal{A}_E$. By Theorem \ref{fc} there exists an ergodic measure preserving system $(X,\mathcal{B},\mu,(T_g)_{g \in G})$ and a set $A \in \mathcal{B}$ with $\mu(A)=d^*(E)$ satisfying \eqref{fcineq}. Let $A_1, A_2 \in \mathcal{B}$ be the sets corresponding to the sets $E_1, E_2$. By the proof of Theorem \ref{fc}, the functions $\mathbb{1}_{A_1}$ and $\mathbb{1}_{A_2}$ are continuous. Let $(G_K)$ be a F\o lner sequence such that $\mu=\textrm{w*-}\lim_{N \to \infty} \frac{1}{|F_N|} \sum_{g \in F_N} \delta_{T_g\omega}$, where $\omega=(\mathbb{1}_E(g))_{g \in G}$. Thus, we can write
	\begin{equation}\label{dstarequality10} \mu(A_1 \cap g^{-1}A_2)=\lim_{N\to \infty} \frac{1}{|G_N|}\sum_{g \in G_N} \mathbb{1}_{A_1\cap g^{-1}A_2}(T_g\omega)=d_{(G_K)}(E_1 \cap g^{-1}E_2).\end{equation}
	Taking an average over $g \in G$ in \eqref{dstarequality10} along any left F\o lner sequence $(F_N)$ in $G$ immediately yields \eqref{dstarequality}, given that the action $(T_g)_{g \in G}$ is ergodic. By construction of $\mu$, we have that $\mu(A)=d_{(G_K)}(E)=d^*(E)$ whence \eqref{dstarequality2} also follows.
\end{proof}
We remark that \eqref{dstarequality} and \eqref{dstarequality2} do not hold for arbitrary sequences $(G_N)$: let $E$ be as in \eqref{hindmanexample}, take $E_1=E_2=E$ and put $G_N=[1,2^{2N}]$. Nonetheless, we have the following Proposition:
\begin{proposition} 
	Let $E \subseteq G$. Let $(G_N)$ be a F\o lner sequence in $G$. Then, there exists a F\o lner subsequence $(G_{N_k})$ such that for all F\o lner sequences $(F_N)$ and all sets $F \in \mathcal{A}_E$ we have
	\begin{equation}\label{inequality} \lim_{N\to \infty}\frac{1}{|F_N|}\sum_{g \in F_N} d_{(G_{N_k})}(F \cap g^{-1}F)\geq d_{(G_{N_k})}(F)^2.\end{equation}
If we let $F=E$ in \eqref{inequality} we have the following variant of \eqref{inequality}:
\begin{equation}\label{inequality5}
    \liminf_{N \to \infty} \frac{1}{|F_N|}\sum_{g \in F_N} \bar{d}_{(G_N)}(E \cap g^{-1}E) \geq \bar{d}_{(G_N)}(E)^2.
\end{equation}
\end{proposition}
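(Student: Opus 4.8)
The plan is to deduce \eqref{inequality} from the \emph{non-ergodic} correspondence already set up in the proof of Theorem \ref{fcfolner}, combined with the mean ergodic theorem for amenable group actions, and then to obtain \eqref{inequality5} by a crude comparison between a subsequential density and the upper density along $(G_N)$. Since $G$ is countable, the algebra $\mathcal{A}_E$ is countable, so by a diagonalization argument I first pass to a subsequence $(G_{N_k})$ of $(G_N)$ along which $d_{(G_{N_k})}(F):=\lim_k |F\cap G_{N_k}|/|G_{N_k}|$ exists for every $F\in\mathcal{A}_E$; for the purposes of \eqref{inequality5} I additionally arrange (this is where $F=E$ is used) that the subsequence realizes the upper density, i.e. $d_{(G_{N_k})}(E)=\bar{d}_{(G_N)}(E)$, which is possible because the latter is a $\limsup$. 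Next I build the symbolic system exactly as in Theorem \ref{fcfolner}: let $X=\{0,1\}^G$ with the shift action $(T_g)_{g\in G}$, let $\omega=(\mathbb{1}_E(g))_{g\in G}$, $Y=\overline{\{T_g\omega:g\in G\}}$, and $A=\{x\in Y:x(e)=1\}$. The indicators of cylinders in $Y$ are precisely the functions $\mathbb{1}_{\tilde F}$ attached to sets $F\in\mathcal{A}_E$ under the dictionary $g^{-1}E\leftrightarrow T_g^{-1}A$, and their linear span is dense in $C(Y)$; hence the convergence of densities along $(G_{N_k})$ forces the empirical measures $\frac{1}{|G_{N_k}|}\sum_{g\in G_{N_k}}\delta_{T_g\omega}$ to converge weak* to a $G$-invariant probability measure $\mu$ on $Y$ satisfying $\mu(\tilde F)=d_{(G_{N_k})}(F)$ for every $F\in\mathcal{A}_E$.

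The heart of the matter is the correspondence identity $\mu(\tilde F\cap T_g^{-1}\tilde F)=d_{(G_{N_k})}(F\cap g^{-1}F)$ (note $F\cap g^{-1}F\in\mathcal{A}_E$), which recasts the left-hand side of \eqref{inequality} as an inner product in $L^2(\mu)$:
\[ \frac{1}{|F_N|}\sum_{g\in F_N} d_{(G_{N_k})}(F\cap g^{-1}F)=\left\langle \mathbb{1}_{\tilde F},\ \frac{1}{|F_N|}\sum_{g\in F_N}U_g\mathbb{1}_{\tilde F}\right\rangle, \]
where $U_gf=f\circ T_g$ is the Koopman operator. By the mean ergodic theorem for amenable group actions, valid along an \emph{arbitrary} F\o lner sequence, the averages $\frac{1}{|F_N|}\sum_{g\in F_N}U_g\mathbb{1}_{\tilde F}$ converge in $L^2(\mu)$ to $P\mathbb{1}_{\tilde F}$, where $P$ is the orthogonal projection onto the $G$-invariant functions; this is exactly why the limit exists and is independent of $(F_N)$. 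Consequently the limit in \eqref{inequality} equals $\langle \mathbb{1}_{\tilde F},P\mathbb{1}_{\tilde F}\rangle=\|P\mathbb{1}_{\tilde F}\|_{L^2(\mu)}^2$. Since the constant function $\mathbf{1}$ is $G$-invariant (so $P\mathbf{1}=\mathbf{1}$) and $P$ is a self-adjoint projection, Cauchy--Schwarz gives $\|P\mathbb{1}_{\tilde F}\|_2^2\geq \langle P\mathbb{1}_{\tilde F},\mathbf{1}\rangle^2=\langle \mathbb{1}_{\tilde F},\mathbf{1}\rangle^2=\mu(\tilde F)^2=d_{(G_{N_k})}(F)^2$, which is precisely \eqref{inequality}.

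For \eqref{inequality5} I specialize $F=E$ and compare densities term by term: for every $g$ one has $d_{(G_{N_k})}(E\cap g^{-1}E)\leq \bar{d}_{(G_N)}(E\cap g^{-1}E)$, since a limit along the subsequence is bounded above by the $\limsup$ along the full sequence. Averaging over $g\in F_N$ and taking $\liminf_N$, the left-hand side of \eqref{inequality5} dominates the (existing) limit furnished by \eqref{inequality}, which is at least $d_{(G_{N_k})}(E)^2=\bar{d}_{(G_N)}(E)^2$ by the normalization fixed in the first step; this establishes \eqref{inequality5}.

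I expect the only genuine subtlety to be the assertion that the left-hand side of \eqref{inequality} has an honest \emph{limit} for every F\o lner sequence $(F_N)$, not merely a $\liminf$ lower bound. This is exactly the point at which the mean ergodic theorem (rather than a bare Cauchy--Schwarz estimate on the Ces\`aro averages) is indispensable, and it is also what forces the two distinct conventions ($\lim$ in \eqref{inequality}, $\liminf$ in \eqref{inequality5}) once one leaves the controlled subsequence $(G_{N_k})$ for the original sequence $(G_N)$; everything else is a routine repackaging of the correspondence from Theorem \ref{fcfolner} together with the identity $P\mathbf{1}=\mathbf{1}$.
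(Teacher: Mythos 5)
Your proof is correct, and it diverges from the paper's argument in two ways worth noting. For the main inequality \eqref{inequality}, the paper also diagonalizes over the countable algebra $\mathcal{A}_E$ and takes the weak* limit $\mu$ of the empirical measures along $(G_{N_k})$, but it then invokes the ergodic decomposition $\mu=\int \mu_t\, d\lambda(t)$, applies the mean ergodic theorem on each ergodic component to identify the limit as $\int \left(\int_X F_1\, d\mu_t\right)^2 d\lambda(t)$, and finishes with Jensen's inequality (see \eqref{ineq4}); you instead work directly with the possibly non-ergodic $\mu$, identify the limit via the amenable mean ergodic theorem as $\|P\mathbb{1}_{\tilde F}\|_{L^2(\mu)}^2$ where $P$ is the projection onto invariant functions, and bound it below by $\mu(\tilde F)^2$ using $P\mathbf{1}=\mathbf{1}$, self-adjointness, and Cauchy--Schwarz. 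The two bounds are literally the same quantity (Jensen over the ergodic decomposition is exactly Cauchy--Schwarz against $\mathbf{1}$, since $\|Pf\|_2^2=\int(\int f\, d\mu_t)^2 d\lambda(t)$), but your version avoids the ergodic decomposition theorem entirely, which is a genuine economy; the paper's version makes the ergodic components explicit, which fits the expository theme of the section. For \eqref{inequality5}, the paper does \emph{not} actually deduce it from \eqref{inequality}: it builds a second correspondence via Theorem \ref{fcfolner} with $\mu(A)=\bar{d}_{(G_N)}(E)$ and reruns the averaging argument on the inequality $\bar{d}_{(G_N)}(E\cap g^{-1}E)\geq \mu(A\cap (T_g)^{-1}A)$. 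Your route — first extracting a subsequence realizing $\bar{d}_{(G_N)}(E)$ for the single set $E$, then diagonalizing over $\mathcal{A}_E$ inside it, and finally comparing termwise via $d_{(G_{N_k})}(E\cap g^{-1}E)\leq \bar{d}_{(G_N)}(E\cap g^{-1}E)$ — derives \eqref{inequality5} directly from \eqref{inequality} with one judiciously chosen subsequence, which is arguably closer to the phrasing of the statement and saves constructing a second system. All the supporting steps you flag (that $F\cap g^{-1}F\in\mathcal{A}_E$, that the clopen cylinder indicators are dense in $C(Y)$ so the diagonalized densities force weak* convergence, and that the mean ergodic theorem holds along an arbitrary F\o lner sequence) are sound and are the same ingredients the paper relies on.
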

\begin{proof} Given that $G$ is countable, the family of sets $\mathcal{A}_E$ is countable. Thus, via a diagonal procedure, we can take a subsequence $(G_{N_k})$ of our given F\o lner sequence $(G_{N})$ so that
	\begin{equation}\label{cluster} \mu:=\textrm{w*-}\lim_{k \to \infty} \frac{1}{|G_{N_k}|}\sum_{g \in G_{N_k}} \delta_{T_g\omega}\end{equation}
exists, where $\omega=(\mathbb{1}_E(g))_{g \in G}$. 
		\\ \\
	Letting $A:=\{x: x(e)=1\}$ we see that $F(x)=x(0)=\mathbb{1}_A(x) \in C(X)$, so the function $F_1$ representing the set $E$ is also in $C(X)$. Thus, we can write
	\begin{equation}\label{ineqerg}\lim_{N\to \infty}\frac{1}{|F_N|}\sum_{g \in F_N} d_{(G_{N_k})}(F \cap g^{-1}F)=\lim_{N\to \infty}\frac{1}{|F_N|}\sum_{g \in F_N} \int F_1 \cdot T_g F_1 \ d\mu.\end{equation}
	Using the ergodic decomposition for $\mu$, we see that the last term in Equation \eqref{ineqerg} can be rewritten as
	\begin{equation}\label{ineq3}\lim_{N\to \infty}\frac{1}{|F_N|}\sum_{g \in F_N} \int \left(\int_X F_1 \cdot T_g F_1 \ d\mu_t \right) d\lambda(t), \end{equation}
	where each $\mu_t$ is an ergodic measure. Thus, by von Neumann's Mean Ergodic Theorem and Jensen's inequality, we have 
	\begin{equation}\label{ineq4} \int \left( \int_X F_1 \ d\mu_t\right)^2 d\lambda(t) \geq \left(\int_X F_1 \ d\mu\right)^2. \end{equation}
	By construction, the right hand side in \eqref{ineq4} is equal to $d_{(G_{N_k})}(F)^2$. To prove \eqref{inequality5}, we use Theorem \ref{fcfolner} to obtain a measure preserving system $(X,\mathcal{B},\mu,(T_g)_{g \in G})$ and a set $A \in \mathcal{B}$ with $\mu(A)=\bar{d}_{(G_N)}(E)$ satisfying inequality \eqref{fcfolnerineq}. In particular, this means that for all $g \in G$ we have
	\begin{equation}\label{dbarineq1}
	    \bar{d}_{(G_N)}(E \cap g^{-1}E) \geq \mu(A \cap (T_g)^{-1}A).
	\end{equation}
Using the same argument that leads to inequality \eqref{ineq4}, we get that for any F\o lner sequence $(F_N)$
\begin{equation}\label{dbarineq2} \lim_{N \to \infty} \frac{1}{|F_N|}\sum_{g \in F_N} \mu(A \cap (T_g)^{-1}A) \geq \mu(A)^2. \end{equation}
Combining \eqref{dbarineq1} and \eqref{dbarineq2} we obtain

\[ \liminf_{N \to \infty} \frac{1}{|F_N|} \sum_{g \in F_N} \bar{d}_{(G_N)}(E \cap g^{-1}E) \geq \mu^2(A)=\bar{d}_{(G_N)}(E)^2,\]
as desired.
\end{proof}
\begin{remark} As was mentioned at the beginning of the section, the results contained herein can be effortlessly carried over to countable cancellative amenable semigroups. Indeed, if $S$ is such a semigroup, then one can embed it into $G:=\{st^{-1}: s, t \in S\}$, which is now going to be a countable amenable group (see Proposition 1.17 in \cite{paterson}). It is straightforward to check that a F\o lner sequence in $S$ becomes a F\o lner sequence in $G$, and this is all that is needed to push the results to this more general context.
\end{remark}
\section{Ergodic sequences and Hindman's theorem}
The goal of this section is to extend Hindman's covering theorem (Theorem \ref{intro2}) to unions of the form $\bigcup_{n=1}^N(E-k_n)$. In particular, we will use Ergodic Theory to characterize (and provide numerous examples of) the sequences $(k_n)$ with the property that for any $E \subseteq \Z$ with $d^*(E)>0$ one has $d^*\left(\bigcup_{n=1}^N (E-k_n)\right) \to 1$ as $N \to \infty$. The ergodic approach can easily be extended to amenable groups. We will discuss this after the proof of Theorem \ref{hindseq}.
\begin{definition}\label{seq10} We say that a sequence of integers $(k_n)_{n \in \N}$ has the \emph{combinatorial sweeping out property} if for every $E \subseteq \Z$ with $d^*(E)>0$ we have
	\begin{equation}\label{hindmanineq7} d^*\left( \bigcup_{n=1}^N (E-k_n)\right) \xrightarrow[N \to \infty]{} 1\end{equation}
\end{definition}
The class of sequences satisfying \eqref{hindmanineq7} is quite wide. For example, as we will see below, ergodic sequences have the combinatorial sweeping out property.
\begin{definition}\label{ergodicsequence} We say that a sequence of positive integers $(k_n)$ is an \emph{ergodic sequence} if for every ergodic measure preserving system $(X,\mathcal{B},\mu,T)$  we have
\begin{equation}\label{ergseq1} \lim_{N \to \infty}\frac{1}{N}\sum_{n=1}^N \mu(A \cap T^{-k_n}B)=\mu(A)\mu(B).\end{equation}
for all $A, B \in \mathcal{B}$.
\end{definition}
\begin{remark}
One can show (see Theorem \ref{equidistributionweak} below) that a sequence is ergodic if and only if for all $f \in L^2(\mu)$ we have
	\begin{equation}\label{ergseq0}\lim_{N \to \infty} \frac{1}{N}\sum_{n=1}^N T^{k_n}f=\int_X f \ d\mu,\end{equation}
where the convergence is with respect to the $L^2(\mu)$ norm.
\end{remark}
The following definition deals with an ergodic counterpart of the notion of combinatorial sweeping out:
\begin{definition}
We say that a sequence of integers $(k_n)_{n \in \N}$ has the \emph{ergodic sweeping out property} if for every ergodic measure preserving system $(X,\mathcal{B},\mu,T)$ and for every $A \in \mathcal{B}$ with $\mu(A)>0$ we have
	\begin{equation}\label{hindmanineq8} \mu \left( \bigcup_{n \in \N} T^{-k_n}A \right)=1.\end{equation}
\end{definition}
As we will see in Theorem \ref{hindseq}, the notions of ergodic sweeping out and combinatorial sweeping out, in fact, coincide, so one can use the term sweeping out unambiguously. But first we will show, as promised, that ergodic sequences have the combinatorial sweeping out property. 
\begin{proposition}\label{ergodicsequencesgood} Let $(k_n)$ be an ergodic sequence. Then $(k_n)$ has the combinatorial sweeping out property.
\end{proposition}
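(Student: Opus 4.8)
The plan is to show that every ergodic sequence has the combinatorial sweeping out property by passing through the ergodic sweeping out property and then invoking the ergodic Furstenberg correspondence principle (Theorem \ref{fc}). First I would establish the easy implication that an ergodic sequence $(k_n)$ has the \emph{ergodic} sweeping out property. Indeed, fix an ergodic measure preserving system $(X,\mathcal{B},\mu,T)$ and a set $A \in \mathcal{B}$ with $\mu(A)>0$. Applying the defining relation \eqref{ergseq1} with $B=A$ gives
\[ \lim_{N \to \infty} \frac{1}{N}\sum_{n=1}^N \mu(A \cap T^{-k_n}A)=\mu(A)^2>0, \]
so in particular $\mu(A \cap T^{-k_n}A)>0$ for infinitely many $n$, and certainly $\mu(\bigcup_{n \in \N} T^{-k_n}A)>0$. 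The point is then that the set $U:=\bigcup_{n \in \N} T^{-k_n}A$ has positive measure, and I would like to upgrade this to $\mu(U)=1$ using ergodicity; the cleanest route is to observe that $U$ is almost invariant in the appropriate sense, or alternatively to argue that if $\mu(U)<1$ then applying \eqref{ergseq1} with $A$ replaced by the positive-measure complement $U^c$ and $B=A$ would force some $T^{-k_n}A$ to meet $U^c$, contradicting the definition of $U$. This yields \eqref{hindmanineq8}, so $(k_n)$ has the ergodic sweeping out property.

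Next I would translate the ergodic statement into the combinatorial one. Let $E \subseteq \Z$ with $d^*(E)>0$ and fix $\ve>0$. By Theorem \ref{fc} (in the case $G=\Z$, which is Theorem \ref{intro1}), there exists an ergodic system $(X,\mathcal{B},\mu,T)$ and a set $A \in \mathcal{B}$ with $\mu(A)=d^*(E)>0$ satisfying the union form of the correspondence inequality, namely
\[ d^*\left(\bigcup_{n=1}^N (E-k_n)\right) \geq \mu\left(\bigcup_{n=1}^N T^{-k_n}A\right) \]
for all $N$. By the ergodic sweeping out property just established, $\mu(\bigcup_{n \in \N} T^{-k_n}A)=1$, and by continuity of measure from below there is some $N$ with $\mu(\bigcup_{n=1}^N T^{-k_n}A)>1-\ve$. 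Combining these two facts gives $d^*(\bigcup_{n=1}^N (E-k_n))>1-\ve$, which is exactly \eqref{hindmanineq7}, establishing the combinatorial sweeping out property.

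The main obstacle I anticipate is the upgrade from positivity to full measure in the ergodic sweeping out step: showing $\mu(U)=1$ rather than merely $\mu(U)>0$. The subtlety is that $U=\bigcup_{n \in \N} T^{-k_n}A$ need not itself be $T$-invariant, so one cannot directly apply the $0$-$1$ law for ergodic invariant sets. The remedy is to exploit the full strength of \eqref{ergseq1}: if $\mu(U^c)>0$, apply the hypothesis with the pair $(U^c, A)$ to get $\lim_{N} \frac{1}{N}\sum_{n=1}^N \mu(U^c \cap T^{-k_n}A)=\mu(U^c)\mu(A)>0$, so $\mu(U^c \cap T^{-k_n}A)>0$ for some $n$; but $T^{-k_n}A \subseteq U$ by definition, forcing $U^c \cap T^{-k_n}A=\emptyset$, a contradiction. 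Hence $\mu(U^c)=0$, i.e. $\mu(U)=1$. Everything else is a routine invocation of Theorem \ref{fc} and continuity of the measure, so this one measure-theoretic point is where the real content lies.
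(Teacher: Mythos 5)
Your proof is correct and takes essentially the same route as the paper: the paper establishes $\mu\left(\bigcup_{n \in \N} T^{-k_n}A\right)=1$ by precisely the contradiction you give in your final paragraph (applying \eqref{ergseq1} with the complement of the union paired against $A$, noting each term $\mu(U^c \cap T^{-k_n}A)$ vanishes), and then concludes via Theorem \ref{fc} and continuity of the measure, exactly as you do. The preliminary musings in your first paragraph about positivity and almost-invariance are superfluous, since the complement argument alone carries the whole ergodic step.
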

\begin{proof} We first note that if $(k_n)$ is an ergodic sequence,  $(X,\mathcal{B},\mu,T)$ is an ergodic measure preserving system and $A \in \mathcal{B}$ is such that $\mu(A)>0$, then 
	\begin{equation}\label{ergseq2} \mu\left(\bigcup_{n \in \N} T^{-k_n}A\right)=1.\end{equation}
(Otherwise taking $B=X \setminus \bigcup_{n \in \N} T^{-k_n}A$ we would get a contradiction with \eqref{ergseq1}.)
\\ \\
Now let $E \subseteq \Z$ be such that $d^*(E)>0$ and take $\varepsilon>0$. By Theorem \ref{fc} there exists an ergodic measure preserving system $(X,\mathcal{B},\mu,T)$ and a set $A \in \mathcal{B}$ with $\mu(A)=d^*(E)$ satisfying
\begin{equation}\label{hindmanineq2} d^*\left(\bigcup_{n=1}^{N}(E-k_n)\right) \geq \mu\left( \bigcup_{n=1}^{N} T^{-k_n}A \right)\end{equation}
for all $N \in \N$. Using \eqref{ergseq2} and continuity of $\mu$ we see that $(k_n)$ satisfies \eqref{hindmanineq1}. 	
\end{proof}
We would like to note that in the proof of Proposition \ref{ergodicsequencesgood} the ergodic Furstenberg correspondence principle, Theorem \ref{fc}, was utilized. Theorem \ref{fc} will also play an instrumental role in the proof of the equivalence of ergodic sweeping out and combinatorial sweeping out (see Theorem \ref{hindseq}). 
\\ \\
There are sequences with the combinatorial sweeping out property that are not ergodic. A rather cheap example is provided by the sequence $k_n:=[\log n]$. While this sequence does not satisfy \eqref{ergseq1}, it takes on all nonnegative integer values and hence is sweeping out. A more interesting example is given by the sequence $k_n:=[n^2+\log n]$. By \cite{boskoqw}, $(k_n)$ is not ergodic. However, one can show that for any ergodic measure preserving system $(X,\mathcal{B},\mu,T)$ and any sets $A, B \in \mathcal{B}$ with $\mu(A)>0$ and $\mu(B)>0$ there is some $n \in \N$ such that $\mu(A \cap T^{-([n^2+\log n])}B)>0$. This implies that $\mu\left( \bigcup_{n \in \N} T^{-[n^2+\log n]}A \right)=1$. This fact together with Theorem \ref{hindseq} below imply that $(k_n)$ is sweeping out.
\begin{definition} We say that an invertible measure preserving system $(X,\mathcal{B},\mu,T)$ has a topological model if there exists a measure-theoretically isomorphic system $(\hat{X},\hat{\mathcal{B}},\hat{\mu},\hat{T})$, where $\hat{X}$ is a compact metric space and $T$ is a homeomorphism from $\hat{X}$ to itself.
\end{definition}
\begin{theorem}[Jewett-Krieger Theorem, \cite{j}, \cite{kr}]\label{jk} Every ergodic invertible measure preserving system $(X,\mathcal{B},\mu,T)$ has a \\ uniquely ergodic \footnote{A measure preserving system $(X,\mathcal{B},\mu,T)$ is called \emph{uniquely ergodic} if $X$ is a compact metric space, $T: X \rightarrow X$ is a homeomorphism and $\mu$ is the unique $T$-invariant normalized Borel measure on $X$. } topological model.
\end{theorem}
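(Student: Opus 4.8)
The statement is the Jewett--Krieger theorem, and the plan is to realize the abstract system as a symbolic system whose coding is arranged so that block frequencies are uniform over every orbit, which is exactly what unique ergodicity demands. First I would dispose of the degenerate case: if $\mu$ has an atom then, by ergodicity and invertibility, $X$ is measure-theoretically a finite cyclic permutation, which is already a finite, hence uniquely ergodic, topological model; so I may assume $T$ is aperiodic and $(X,\mathcal{B},\mu)$ is a non-atomic Lebesgue space. I would then fix a refining sequence of finite measurable partitions $P_1 \le P_2 \le \cdots$ that is generating, i.e.
\[ \textstyle\bigvee_{n} \bigvee_{k \in \Z} T^{-k}P_n = \mathcal{B} \pmod{\mu}. \]
Each $P_n$ induces a symbolic factor $\phi_n \colon X \to A_n^{\Z}$ sending $x$ to the sequence of $P_n$-labels of $(T^k x)_{k}$, with image a subshift $\Omega_n$ carrying the pushforward measure $\mu_n=(\phi_n)_*\mu$. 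The difficulty is that $(\Omega_n,\sigma,\mu_n)$ is in general \emph{not} uniquely ergodic.

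The technical heart is a uniformization lemma: given a finite partition $P$ and $\varepsilon>0$, there is a finite partition $Q$ with symmetric-difference distance $d(P,Q)<\varepsilon$ such that the subshift generated by the $Q$-coding is uniquely ergodic. I would prove this by the Rokhlin lemma together with a ``uniform filling'' argument: using aperiodicity, cover all but an $\varepsilon$-fraction of $X$ by a Rokhlin tower of large height $h$, record the $P$-name down each column, and then redefine the labels on the tower so that the set of column-names that actually occur, and the frequency with which each long block appears, is forced to be the same no matter where one starts reading. The two competing constraints are that $Q$ must move $P$ by less than $\varepsilon$ in measure (so that the isomorphism type survives) while the block statistics become \emph{exactly} uniform, not merely $\mu$-typical; balancing these is the Kakutani--Rokhlin / Hahn--Katznelson technique, and it is here that the construction genuinely uses ergodicity.

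With the lemma available I would run an induction producing a refining sequence $Q_1 \le Q_2 \le \cdots$, each with uniquely ergodic coding, such that $Q_n$ is $\varepsilon_n$-close to $Q_{n-1}\vee P_n$ with $\sum_n \varepsilon_n < \infty$; then $\bigvee_n Q_n$ still generates $\mathcal{B}$ mod $\mu$. Setting $\hat{X} := \varprojlim_{n} \Omega(Q_n)$ with $\hat{T}=\sigma$ gives a compact metric space and a homeomorphism. An inverse limit of uniquely ergodic systems along equivariant factor maps is again uniquely ergodic, so $\hat{X}$ carries a unique invariant measure $\hat{\mu}=\varprojlim_{n} \mu_n$; and because the $Q_n$ generate, the full coding map $x \mapsto (Q_n\text{-names of } x)_n$ is a measure-theoretic isomorphism of $(X,\mathcal{B},\mu,T)$ onto $(\hat{X},\hat{\mathcal{B}},\hat{\mu},\hat{T})$, as required.

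The main obstacle is precisely the uniformization lemma and, more pointedly, carrying it out compatibly along a refining generating sequence: one must make each successive tower-filling uniform and close to the previous partition \emph{simultaneously}, so that uniqueness of the invariant measure survives to the inverse limit while the generators are preserved. This simultaneous control --- uniform block statistics for all points together with smallness of the perturbation --- is the delicate step that separates Krieger's general argument from Jewett's earlier weakly mixing case.
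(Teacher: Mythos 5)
The paper itself offers no proof of this statement: it is imported wholesale from Jewett \cite{j} and Krieger \cite{kr}, so there is no internal argument to compare against, and your proposal must be judged on its own. As an outline it follows the standard published route (Krieger's argument as organized in Denker--Grillenberger--Sigmund, or Weiss's strictly ergodic models): dispose of the atomic case, perturb a generating sequence of partitions so that each symbolic coding is a uniquely ergodic subshift, and pass to an inverse limit. Several of your framing steps are genuinely correct and worth affirming: the reduction to the aperiodic non-atomic case, the summable-perturbation argument that $\bigvee_n Q_n$ still generates, and the inverse-limit step --- any invariant measure on $\varprojlim \Omega(Q_n)$ is determined by its marginals, which are forced by unique ergodicity at each stage, and continuous equivariant images of uniquely ergodic systems are uniquely ergodic (lift an invariant measure on the factor and average), so the limit system is indeed uniquely ergodic and the a.e.-defined coding map is an isomorphism onto it by uniqueness of the invariant measure.

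There are, however, two concrete gaps at the heart of the argument. First, the uniformization lemma cannot be obtained from a single Rokhlin tower as you describe: a tower of height $h$ controls empirical block frequencies only for blocks of length $o(h)$, and only up to errors from the residual set and the tower boundary, whereas unique ergodicity requires uniform convergence of frequencies for blocks of \emph{every} length along \emph{every} orbit. This forces an infinite nested hierarchy of Kakutani--Rokhlin towers with summable filling errors --- precisely the Hahn--Katznelson copying machinery you name but do not execute; asserting the lemma with a one-tower sketch leaves the theorem's actual content unproven. Second, your induction is internally inconsistent as stated: you require both that $Q_1 \le Q_2 \le \cdots$ is refining (so that one-block merge codes give the factor maps $\Omega(Q_n) \to \Omega(Q_{n-1})$ needed for the inverse limit) and that $Q_n$ is merely $\varepsilon_n$-close to $Q_{n-1} \vee P_n$. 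The plain lemma --- perturb an arbitrary partition $P$ to a nearby $Q$ with uniquely ergodic coding --- does not preserve refinement of $Q_{n-1}$, so applying it at stage $n$ destroys the factor structure. What the actual proof requires is a \emph{relative} version of the lemma: given a partition whose coding is already uniquely ergodic, refine it (exactly, not approximately) to a finer partition, $\varepsilon$-close to $Q_{n-1}\vee P_n$, whose coding remains uniquely ergodic, with the new towers built inside the old tower hierarchy. That relative uniformization is the delicate step that distinguishes Krieger's theorem from an easy induction, and it is exactly the point your proposal gestures at but does not supply.
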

The following theorem characterizes sequences which are "good" for Hindman's covering theorem (see Theorem \ref{intro2}) and establishes the equivalence between measurable sweeping out and combinatorial sweeping out:
\begin{theorem}\label{hindseq} A sequence of integers has the combinatorial sweeping out property if and only if it has the ergodic sweeping out property.
\end{theorem}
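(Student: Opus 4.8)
The plan is to prove both implications of the equivalence, using the Ergodic Furstenberg Correspondence Principle (Theorem \ref{fc}) for the direction that ergodic sweeping out implies combinatorial sweeping out, and using the Jewett--Krieger Theorem (Theorem \ref{jk}) together with a symbolic dynamics / quasi-generic point argument for the converse.

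\textbf{Ergodic sweeping out $\Rightarrow$ combinatorial sweeping out.} First I would assume $(k_n)$ has the ergodic sweeping out property, fix $E \subseteq \Z$ with $d^*(E)>0$, and fix $\varepsilon>0$. By Theorem \ref{fc} there is an ergodic measure preserving system $(X,\mathcal{B},\mu,T)$ and $A \in \mathcal{B}$ with $\mu(A)=d^*(E)>0$ satisfying, via the union version of \eqref{fcineq},
\[ d^*\left(\bigcup_{n=1}^N (E-k_n)\right) \geq \mu\left(\bigcup_{n=1}^N T^{-k_n}A\right) \]
for all $N \in \N$. Since $\mu(A)>0$, the ergodic sweeping out hypothesis gives $\mu\left(\bigcup_{n \in \N} T^{-k_n}A\right)=1$, and by continuity of $\mu$ from below the quantities $\mu\left(\bigcup_{n=1}^N T^{-k_n}A\right)$ increase to $1$. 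Hence for $N$ large enough the right-hand side exceeds $1-\varepsilon$, forcing $d^*\left(\bigcup_{n=1}^N (E-k_n)\right)>1-\varepsilon$. This establishes \eqref{hindmanineq7}, so $(k_n)$ is combinatorially sweeping out. This direction is essentially the argument already used in Proposition \ref{ergodicsequencesgood}.

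\textbf{Combinatorial sweeping out $\Rightarrow$ ergodic sweeping out.} This is the direction I expect to be the main obstacle, since one must pass from a purely combinatorial hypothesis about subsets of $\Z$ back to an arbitrary abstract ergodic system. The plan is to argue contrapositively: suppose $(k_n)$ fails the ergodic sweeping out property, so there exist an ergodic system $(X,\mathcal{B},\mu,T)$ and a set $A$ with $\mu(A)>0$ but $\mu\left(\bigcup_{n \in \N} T^{-k_n}A\right)=1-\delta$ for some $\delta>0$. By Theorem \ref{jk} we may replace this system by a measure-theoretically isomorphic uniquely ergodic topological model, so without loss of generality $X$ is a compact metric space, $T$ a homeomorphism, and $\mu$ the unique invariant measure; moreover $A$ may be taken, up to a null set, to be (for instance) a set whose indicator is suitably approximated by continuous functions. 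I would then pick a point $x_0 \in X$ that is generic for $\mu$ (by unique ergodicity every point is generic), and define $E := \{ m \in \Z : T^m x_0 \in A\}$. Genericity yields $d^*(E) \geq \bar d_{([1,N])}(E) = \mu(A)>0$, and a corresponding computation gives
\[ d^*\left(\bigcup_{n=1}^N (E-k_n)\right) = \mu\left(\bigcup_{n=1}^N T^{-k_n}A\right) \leq 1-\delta \]
for every $N$, so the combinatorial sweeping out property fails for this $E$. The care needed here is to handle the boundary of $A$: one arranges that $\mu(\partial A)=0$ so that $\mathbb{1}_A$ is a $\mu$-almost-everywhere continuous function and the ergodic averages along $x_0$ converge to the correct values, including for the finite unions $\bigcup_{n=1}^N T^{-k_n}A$ whose indicators are then also $\mu$-a.e.\ continuous.

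The technical heart of the converse is therefore the interplay between the Jewett--Krieger topological model and the choice of a genuinely covering set $A$ with negligible boundary, ensuring that generic-point orbit statistics translate faithfully into densities of the combinatorial set $E$. I would organize the write-up so that the easy implication is dispatched in a few lines referencing Theorem \ref{fc}, and the bulk of the proof is devoted to the contrapositive of the converse, where the reduction to a uniquely ergodic model and the verification that $d^*\left(\bigcup_{n=1}^N(E-k_n)\right)$ stays bounded below $1$ constitute the main steps.
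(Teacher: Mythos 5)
Your first implication is correct and is exactly the paper's argument: Theorem \ref{fc} plus continuity of $\mu$ from below. The genuine issue is in your converse. Your plan hinges on replacing $A$, inside the Jewett--Krieger model, by a representative with $\mu(\partial A)=0$, so that $\mathbb{1}_A$ and the indicators of the finite unions $\bigcup_{n=1}^N T^{-k_n}A$ are sandwiched by continuous functions and unique ergodicity converts orbit statistics into the exact equality $d^*\left(\bigcup_{n=1}^N (E-k_n)\right)=\mu\left(\bigcup_{n=1}^N T^{-k_n}A\right)$. But such a representative need not exist: an arbitrary measurable set in a topological model cannot in general be modified on a null set to have null topological boundary. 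For instance, if $\mu$ is fully supported and $A$ is a closed, nowhere dense set of positive measure (a ``fat Cantor set''), then every $A'$ with $\mu(A \triangle A')=0$ has empty interior while $\overline{A'}$ contains $\mu$-almost every density point of $A$, so $\mu(\partial A') \geq \mu(A)>0$. Since the Jewett--Krieger theorem gives you no control over the topological realization of $A$, this step can fail, and with it your claimed equality. Note also that for the \emph{upper} bound you really need uniform-in-window control: $d^*$ is a supremum over all intervals $[M,N)$ with $N-M \to \infty$, so almost-everywhere genericity of $x_0$ (which only controls averages anchored at a fixed base point) is not enough; you must dominate the relevant indicator by a continuous function and invoke unique ergodicity, which is available only when boundaries are negligible or the set is compact and separated from its complement.

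The paper repairs exactly this point by trading your exact equality for two one-sided estimates. By regularity of $\mu$ it chooses a compact $K_1 \subseteq A$ with $\mu(K_1)>0$ and a compact $K_2 \subseteq X \setminus \bigcup_{n \in \N} T^{-k_n}A$ with $\mu(K_2) \geq \frac{\delta}{2}$, defines $E:=\{n : T^n x_0 \in K_1\}$ for a point $x_0$ chosen (via the mean ergodic theorem along a subsequence) so that $d^*(E) \geq \mu(K_1)>0$, and then, for each $N$, separates the compact set $T^{-k_1}K_1 \cup \dots \cup T^{-k_N}K_1$ from $K_2$ by a Urysohn function $f$; unique ergodicity applied to the continuous majorant $f$ yields $d^*\left(\bigcup_{n=1}^N (E-k_n)\right) \leq \int_X f \, d\mu \leq 1-\frac{\delta}{2}$ uniformly over all windows. (The paper also records the small but necessary reduction to an invertible system before invoking Jewett--Krieger, which your write-up leaves implicit.) Your architecture --- contrapositive, topological model, generic point, symbolic definition of $E$ --- is the right one, but the null-boundary device must be replaced by this compact-approximation-plus-Urysohn argument for the proof to go through.
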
	
\begin{proof} In one direction, assume that $(k_n)$ has the ergodic sweeping out property. Let $E \subseteq \Z$ with $d^*(E)>0$. Let $(X,\mathcal{B},\mu,T)$ be a measure preserving system and $A \in \mathcal{B}$ which are guaranteed by Theorem \ref{fc} and satisfy the following special case of \eqref{fcineq}:  
	\[ d^*\left(\bigcup_{n=1}^N (E-k_n)\right) \geq \mu\left(\bigcup_{n=1}^N T^{-k_n}A\right) \textrm{ for all }N \in \N.\]
Since $\mu(A)=d^*(E)>0$ and the system $(X,\mathcal{B},\mu,T)$ is ergodic, equality \eqref{hindmanineq8} holds. Continuity of the measure $\mu$ and equality \eqref{hindmanineq8} together imply that 
\[ d^*\left( \bigcup_{n=1}^N (E-k_n)\right) \xrightarrow[N \to \infty]{} 1 \]
and so $(k_n)$ is combinatorially sweeping out.
\\ \\
In the other direction, we will show the contrapositive. Assume that there exists an ergodic system $(X,\mathcal{B},\mu,T)$ and a set $A \in \mathcal{B}$ with $\mu(A)>0$ such that $\mu\left(\bigcup_{n \in \N} T^{-k_n}A \right)=1-\delta$ for some $\delta>0$. Without loss of generality, one can assume that $(X,\mathcal{B},\mu,T)$ is invertible. Indeed, otherwise we can work with the invertible extension of $(X,\mathcal{B},\mu,T)$.
\\ \\
In view of the Jewett-Krieger theorem (Theorem \ref{jk}), we can also assume that $(X,T)$ is a uniquely ergodic topological dynamical system. 
\\ \\
Since $X$ is a compact metric space, the probability measure $\mu$ is regular. Let $K_1 \subseteq A$ be a compact set such that $\mu(K_1)>0$ and $K_2 \subseteq X \setminus \bigcup_{n \in \N} T^{-k_n}A$ a compact subset such that $\mu(K_2) \geq \frac{\delta}{2}$. Since $T$ is ergodic, von Neumann's mean ergodic theorem implies
\[ \lim_{N \to \infty} \frac{1}{N}\sum_{n=1}^N T^nf=\int_X f \ d\mu, \]
where $f=\mathbb{1}_{K_1}$ and the convergence is in the $L^2(\mu)$-norm. Thus, there is a subsequence $(N_k)$ and $X_0 \subseteq X$ with $\mu(X_0)=1$ such that for all $x \in X_0$ we have
\[ \lim_{k \to \infty} \frac{1}{N_k}\sum_{n=1}^{N_k} f(T^nx)=\int_X f \ d\mu.\]
Let $x_0 \in X_0$ and consider the set 
\[ E:=\{ n \in \Z: T^nx_0 \in K_1\}.\]
Note that by the choice of $x_0$ and $E$
\[ \lim_{k \to \infty}\frac{1}{N_k}\sum_{n=1}^{N_k}\mathbb{1}_{E}(n)=\lim_{k \to \infty} \frac{1}{N_k}\sum_{n=1}^{N_k} \mathbb{1}_{K_1}(T^nx_0)=\int_X \mathbb{1}_{K_1}\ d\mu=\mu(K_1)>0\]
which implies that $d^*(E)>0$. We claim that for all $N \in \N$, we have 
\[d^*\left(\bigcup_{n=1}^N (E-k_n)\right) \leq 1-\frac{\delta}{2}.\]
Indeed, let $N \in \N$. By our choice of $K_2$, the set $T^{-k_1}K_1 \cup \dots \cup T^{-k_N} K_1$ is a compact set disjoint from $K_2$, so by Urysohn's lemma there is a continuous function $f: X \rightarrow [0,1]$ such that $f(x)=1$ if $x \in T^{-k_1}K_1 \cup \dots \cup T^{-k_N}K_1 $ and $f(x)=0$ if $x \in K_2$. Thus,
\begin{multline}\label{erseq} d^*\left(\bigcup_{n=1}^N (E-k_n)\right)=\limsup_{N-M\to \infty} \frac{1}{N-M} \sum_{n=M}^{N-1} \mathbb{1}_{T^{-k_1}K_1 \cup \dots \cup T^{-k_N} K_1}(T^nx_0) \\ \leq\lim_{N-M\to \infty} \frac{1}{N-M} \sum_{n=M}^{N-1} f(T^nx_0)=\int_X f \ d\mu \leq 1-\frac{\delta}{2}.\end{multline}
(Note that we used the fact that for uniquely ergodic systems, $\lim_{N-M\to \infty} \frac{1}{N-M} \sum_{n=M}^{N-1} f(T^nx_0)$ exists for any continuous function $f$ and any $x_0\in X$). 
\end{proof}
While we have chosen to stick with $\Z$ for the sake of clarity, it is worth mentioning that one can establish a version of Theorem \ref{hindseq} for general countable amenable groups. Indeed, it is not hard to define in total analogy with Definition \ref{seq10} and \ref{ergodicsequence} the notions of combinatorial and measurable sweeping out for general countable amenable groups. 
To carry out the amenable generalization of Theorem \ref{hindseq}, one has to invoke a general form of Jewett-Krieger's theorem due to Rosenthal (see \cite{rosenthal}). Note that Definition \ref{ergodicsequence} naturally extends to $\Z^d$ and even to any amenable group, and can be used to provide numerous examples of ergodic (and hence sweeping out) sequences. One can show, for example, that every subset of positive density of a minimally almost periodic group is ergodic.
\\ \\
We give now some examples of ergodic sequences both in $\Z$ and $\Z^d$. (See \cite{berles} and \cite{boskoqw}.) 
\begin{itemize}
    \item[(1)]  $\{[bn^c] : n \in \N\}$, where $c \notin \Q$, $c>1$ and $b\neq 0$.
    \item[(2)] $\{[bn^c+dn^a] : n \in \N\}$, where $b,d \neq 0$, $b/d \notin \Q$, $c \geq 1$, $a>0$ and $a \neq c$.
    \item[(3)] $\{[bn^c(\log n)^d] : n \in \N\}$, where $b \neq 0$, $c \notin \Q$, $c>1$ and $d$ is any number
    \item[(4)] $\{[bn^c(\log n)^d] : n \in \N\}$, where $b \neq 0$, $c \in \Q$, $c>1$ and $d \neq 0$.
    \item[(5)] $\{[bn^c+d(\log n)^a]: n \in \N \}$, where $b, d \neq 0$, $c \geq 1$ and $a>1$. 
\end{itemize}
Another class of examples of ergodic sequences is provided by sequences of the form $[g(n)]$, where $g$ is any \emph{tempered function} \footnote{Let $k$ be a non-negative integer. A real-valued function $g$ which is $(k+1)$ times continuously differentiable on $[x_0,\infty)$, where $x_0 \geq 0$, is called a \emph{tempered function of order} $k$ if (a) $g^{(k+1)}(x)$ tends monotonically to zero as $x \to \infty$, and (b) $\lim_{x \to \infty} x|g^{(k+1)}(x)|=+\infty$.} (see \cite{bknutson}, Theorem 7.1)
\\ \\
The paper \cite{bks} provides a class of examples of ergodic sequences involving primes for $\Z^d$ actions. It is clear that these sequences will be sweeping out for $\Z^d$ due to a straightforward generalization of Lemma \ref{ergodicsequencesgood}. Namely, sequences of the form $\{([\xi_1(p_n)],\dots,[\xi_d(p_n)]) : n \in \N\}$, where $p_n$ denotes the $n$-th prime with the standard order, and where $\xi_1,\dots,\xi_d$ are functions in a Hardy field with subpolynomial growth such that either
    \begin{equation}\label{hardyfield1}
    \lim_{x \to \infty} \frac{\xi(x)}{x^{l+1}}=\lim_{x \to \infty} \frac{x^l}{\xi(x)}=0 \textrm{ for some } l \in \N, \textrm{ or } \lim_{x \to \infty} \frac{\xi(x)}{x}=\lim_{x \to \infty} \frac{\log x}{\xi(x)}=0,
    \end{equation}
    and such that 
    any combination of the form $\sum_{i=1}^d b_i\xi_i$ also satisfies \eqref{hardyfield1} for all $(b_1,\dots,b_d) \in \R^d\setminus \{\vec{0}\}$. A particular case of this would be sequences of the form $\{([p_n^{c_1}],\dots,[p_n^{c_d}]) : n \in \N\}$, where $p_n$ denotes the $n$-th prime, and where $c_1,\dots,c_d$ are distinct positive real numbers such that $c_i \notin \N$ for all $i$. (This special case was obtained in \cite{by}). 
    \\ \\
    We conclude with showing that strong and weak convergence lead to the same definition of an ergodic sequence. We did not need this fact for this section, but we believe it is of independent interest. Note that Theorem \ref{equidistributionweak} can be viewed as a generalization of the well-known fact that the $L^2$ version of the mean ergodic theorem follows from its weak convergence version.
\begin{theorem}\label{equidistributionweak}
For a sequence $(k_n) \subseteq \Z$, the following are equivalent:
\begin{itemize}
    \item[(1)] For every ergodic measure preserving system $(X,\mathcal{B},\mu,T)$ and for all $A, B \in \mathcal{B}$   
\begin{equation}\label{ergseq1bis} \lim_{N \to \infty}\frac{1}{N}\sum_{n=1}^N \mu(A \cap T^{-k_n}B)=\mu(A)\mu(B).\end{equation}
    \item[(2)] For every ergodic measure preserving system $(X,\mathcal{B},\mu,T)$ and for all $f \in L^2(\mu)$
	\begin{equation}\label{ergseq}\lim_{N \to \infty} \frac{1}{N}\sum_{n=1}^N T^{k_n}f=\int_X f \ d\mu.\end{equation}
	where the convergence is with respect to the $L^2(\mu)$ norm.
\end{itemize}

\end{theorem}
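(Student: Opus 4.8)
The plan is to prove the equivalence by showing $(2) \Rightarrow (1)$ trivially and $(1) \Rightarrow (2)$ via a standard Hilbert-space argument. The direction $(2) \Rightarrow (1)$ is immediate: assuming \eqref{ergseq}, I would apply the averaged operators to $f = \mathbb{1}_B$ and take inner products against $\mathbb{1}_A$. Since $\mu(A \cap T^{-k_n}B) = \int_X \mathbb{1}_A \cdot T^{k_n}\mathbb{1}_B \, d\mu = \langle \mathbb{1}_A, T^{k_n}\mathbb{1}_B\rangle$, the Cesàro average of these quantities converges to $\langle \mathbb{1}_A, \mu(B) \cdot \mathbb{1}\rangle = \mu(A)\mu(B)$ by the continuity of the inner product (strong convergence implies weak convergence), which is exactly \eqref{ergseq1bis}.

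The substantive direction is $(1) \Rightarrow (2)$. The key observation is that $(1)$ is precisely the statement that the weak limit $\lim_N \frac{1}{N}\sum_{n=1}^N T^{k_n} f = \int_X f \, d\mu$ holds for all $f$, at least when tested against indicators and hence (by linearity and density) against all $g \in L^2(\mu)$; so my starting point is that \eqref{ergseq} holds in the \emph{weak} topology, and I must upgrade this to \emph{norm} convergence. Write $P f := \int_X f \, d\mu \cdot \mathbb{1}$ for the orthogonal projection onto the constants, and set $S_N f := \frac{1}{N}\sum_{n=1}^N T^{k_n} f$. Decompose $f = Pf + f_0$ where $f_0 = f - Pf$ has $\int_X f_0 \, d\mu = 0$. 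Since each $T^{k_n}$ fixes constants, $S_N(Pf) = Pf$ exactly, so it suffices to prove $\|S_N f_0\| \to 0$ for mean-zero $f_0$, for which weak convergence gives $S_N f_0 \rightharpoonup 0$.

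The heart of the argument is converting weak convergence $S_N f_0 \rightharpoonup 0$ into norm convergence $\|S_N f_0\| \to 0$. The plan is to use the identity $\|S_N f_0\|^2 = \langle S_N f_0, S_N f_0\rangle$ and exploit that the $T^{k_n}$ are unitary (using the invertible extension if necessary, exactly as in the proof of Theorem \ref{hindseq}). Expanding, $\|S_N f_0\|^2 = \frac{1}{N^2}\sum_{m,n=1}^N \langle T^{k_m}f_0, T^{k_n}f_0\rangle = \frac{1}{N^2}\sum_{m,n=1}^N \langle f_0, T^{k_n - k_m}f_0\rangle$. The cleanest route I would take is to apply hypothesis $(1)$ itself to the \emph{product system} $(X \times X, \mathcal{B} \otimes \mathcal{B}, \mu \times \mu, T \times T)$: a short computation shows that $\|S_N f_0\|_{L^2(\mu)}^2$ equals the Cesàro average over $n$ of $\langle F, (T\times T)^{k_n} F\rangle_{L^2(\mu\times\mu)}$ for the mean-zero function $F(x,y) = f_0(x)\overline{f_0(y)}$, and hypothesis $(1)$ (extended from indicators to $L^2$ by density) forces this to converge to $\left(\int F \, d(\mu\times\mu)\right) = \left|\int f_0 \, d\mu\right|^2 = 0$, provided $T \times T$ is ergodic.

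The main obstacle is precisely this ergodicity requirement: $T \times T$ need \emph{not} be ergodic even when $T$ is. This is exactly why the hypothesis is phrased for \emph{all} ergodic systems and is the point where ergodicity of the product is not available for free. The standard fix, which I would adopt, is the classical Koopman--von Neumann spectral argument: decompose $L^2(\mu) = \mathcal{H}_{\mathrm{rat}} \oplus \mathcal{H}_{\mathrm{wm}}$ into the rational (eigenfunction) part and the weakly mixing part relative to the action, and verify norm convergence on each piece separately, handling the eigenfunction contributions via the spectral measure and the weakly mixing part via the fact that its diagonal measure has no atoms. Alternatively, and more in the spirit of this paper, I would avoid the product system entirely and instead derive from $(1)$ directly that the spectral measure $\sigma_{f_0}$ of $f_0$ (on the circle, for the unitary $T$) satisfies $\lim_N \frac{1}{N}\sum_{n=1}^N \widehat{\sigma_{f_0}}(k_n) = \sigma_{f_0}(\{1\}) = 0$, and then show $\|S_N f_0\|^2 = \int_{\mathbb{T}} \left|\frac{1}{N}\sum_{n=1}^N z^{k_n}\right|^2 d\sigma_{f_0}(z) \to 0$ by dominated convergence, using that the integrand tends to $\sigma_{f_0}(\{1\})$-supported mass, which is zero. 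Whichever spectral route is chosen, the crux is transferring the averaged hypothesis into a statement about the spectral measure's atom at $1$ and then invoking bounded convergence; the book-keeping to pass from indicators to general $L^2$ functions by density is routine.
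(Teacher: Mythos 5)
Your overall architecture coincides with the paper's proof: the trivial direction is handled identically, and for $(1)\Rightarrow(2)$ the paper also goes through the Herglotz/spectral representation $\bigl\|\frac{1}{N}\sum_{n=1}^N T^{k_n}f\bigr\|^2=\int_{\mathbb{T}}\bigl|\frac{1}{N}\sum_{n=1}^N e^{2\pi i k_n x}\bigr|^2 \, d\nu_f(x)$ followed by bounded convergence and the identification $\nu_f(\{0\})=\bigl|\int_X f \, d\mu\bigr|^2$ (the paper's \eqref{lastequationsec4}); your exact-cancellation identity $\|S_Nf-\int f\,d\mu\|^2=\|S_Nf\|^2-|\int f \, d\mu|^2$ matches \eqref{ergseq3}, and your rejection of the product-system route is sound for the reason you give. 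However, your chosen spectral route has a genuine gap at its crux. You justify the dominated-convergence step solely by the statement $\lim_N\frac{1}{N}\sum_{n=1}^N\widehat{\sigma_{f_0}}(k_n)=\sigma_{f_0}(\{1\})=0$. This is strictly weaker than what the step requires, namely the \emph{pointwise} Weyl condition $\frac{1}{N}\sum_{n=1}^N z^{k_n}\to 0$ for every $z\in\mathbb{T}\setminus\{1\}$. To see the insufficiency concretely: take $\sigma=\frac{1}{2}(\delta_i+\delta_{-i})$ and $k_n\equiv 1 \pmod 4$; then $\widehat{\sigma}(k_n)=0$ for all $n$ and $\sigma(\{1\})=0$, yet $\int_{\mathbb{T}}\bigl|\frac{1}{N}\sum_{n=1}^N z^{k_n}\bigr|^2 \, d\sigma(z)=1$ for all $N$. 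Worse, the two facts you extract do not actually use hypothesis $(1)$ beyond weak convergence in the given system: $\sigma_{f_0}(\{1\})=0$ follows from ergodicity and $\int_X f_0 \, d\mu=0$ alone, and the Ces\`{a}ro statement is just $(1)$ tested against $f_0$ itself; weak convergence alone cannot yield norm convergence, so something essential is missing.

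The missing idea---which occupies the first half of the paper's proof---is to apply hypothesis $(1)$ to \emph{auxiliary ergodic rotation systems}: for each $x\in\mathbb{T}\setminus\{0\}$, rotation by $x$ on the compact group $G_x:=\overline{\{nx:n\in\Z\}}$ with Haar measure (either all of $\mathbb{T}$, or a finite cyclic group when $x$ is rational) is an ergodic system, and testing the $L^2$-extension \eqref{ergseqequi} of $(1)$ against a nontrivial character $\chi$ of $G_x$ yields \eqref{equidistributionchi}, i.e.\ $\frac{1}{N}\sum_{n=1}^N\chi(k_n x)\to 0$; taking $\chi(x)=e^{2\pi i x}$ gives exactly the pointwise condition $\frac{1}{N}\sum_{n=1}^N e^{2\pi i k_n x}\to 0$ for every $x\neq 0$. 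The rational (finite-rotation) case is essential---it is precisely what excludes sequences such as $k_n\equiv 1\pmod 4$ in the counterexample above. Once this pointwise input is in hand, your dominated-convergence computation closes the argument exactly as in the paper. In short: right architecture, but you have mislocated the crux---it is not the atom of $\sigma_{f_0}$ at $1$, but the equidistribution of $(k_n x)$ in $G_x$ for \emph{all} $x\neq 0$, which must be extracted from $(1)$ by quantifying over all ergodic systems, rotations included.
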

\begin{proof}
Strong convergence implies weak convergence, so one direction is trivial. In the other direction, notice that for any $x \in \mathbb{T}$, \eqref{ergseq1bis} implies that $(k_nx)_{n \in \N}$ is uniformly distributed in $\overline{\{nx : n \in \Z\}} \subseteq \mathbb{T}$. To see this we argue as follows. First, observe that, by a standard approximation argument, \eqref{ergseq1bis} implies that for all $f, g \in L^2(\mu)$ 
\begin{equation}\label{ergseqequi}
    \lim_{N \to \infty}\frac{1}{N}\sum_{n=1}^N \int_X f\cdot  T^{k_n} \bar{g} \ d\mu=\int_X f \ d\mu \int_X \bar{g} \ d\mu.
\end{equation}
Let $X=\T$, $x \in \mathbb{T} \setminus \{0\}$ and put $T:X \rightarrow X$ the rotation by $x$, i.e. $Tz=z+x$ for all $z \in \T$. Let $\chi$ be a non-trivial character of the compact abelian group $G_x:=\overline{\{ nx : n \in \Z\}}$ (note that either $G_x=\mathbb{T}$ or it is a finite subgroup of $\mathbb{T}$), and set $f(y)=\bar{\chi}(y)$ and $g(y)=f(y)=\bar{\chi}(y)$. With these choices, equation \eqref{ergseqequi} becomes
\begin{equation}\label{weylequi}
     \lim_{N \to \infty}\frac{1}{N}\sum_{n=1}^N \int_X \bar{\chi}(y) \chi(y+k_nx) \ d\nu(y)=\int_X \chi(y) \ d\nu(y) \int_X \bar{\chi}(y) \ d\nu(y),
\end{equation}
where $\nu$ is the Haar measure on $G_x$. Now, simplifying \eqref{weylequi} we get
\begin{equation}\label{equidistributionchi}
     \lim_{N \to \infty}\frac{1}{N}\sum_{n=1}^N \chi(k_nx)=0,
\end{equation}
so $(k_nx)$ is equidistributed on $G_x$, as desired.
Then, for any $f \in L^2(\mu)$, we have
\begin{equation}\label{ergseq3} \left|\left| \frac{1}{N}\sum_{n=1}^N T^{k_n}f-\int_X f \ d\mu\right|\right|^2= \left|\left| \frac{1}{N}\sum_{n=1}^N T^{k_n}f\right|\right|^2-\left| \int_X f \ d\mu\right|^2
\end{equation}
after expanding the inner product and recombining. Next we are going to invoke the classical Herglotz's theorem,  which states that the positive definite sequence $a(n)=\langle T^n f, f \rangle$ has a representation $a(n)=\int_{\mathbb{T}} e^{2\pi i n x} \ d\nu_f(x)$, for some finite positive measure $\nu_f$ on $\T$. Thus, the right hand side of \eqref{ergseq3} becomes
\begin{multline}\label{ergseq4}
    \frac{1}{N^2}\sum_{n,m=1}^N \langle T^{k_n-k_m}f, f \rangle- \left| \int_X f \ d\mu \right|^2=\frac{1}{N^2}\sum_{n,m=1}^N \int_{\mathbb{T}} e^{2\pi i (k_n-k_m)x} \ d\nu_f(x)- \left| \int_X f \ d\mu \right|^2= \\
\int_{\T} \left| \frac{1}{N}\sum_{n=1}^N e^{2\pi i k_nx}\right|^2 \ d\nu_f(x)-\left| \int_X f \ d\mu\right|^2.
\end{multline}
Using \eqref{equidistributionchi} for $\chi(x)=e^{2\pi i x}$, and invoking ergodicity of $T$ we get 
\begin{equation}\label{lastequationsec4}
\lim_{N \to \infty} \int_{\T} \left| \frac{1}{N}\sum_{n=1}^N e^{2\pi i k_nx}\right|^2 \ d\nu_f(x)-\left| \int_X f \ d\mu\right|^2=\nu_f(\{0\})-\left| \int_X f \ d\mu\right|^2=0.
\end{equation} 
(In the last step we used the fact that $\nu_f(\{0\})=\lim_{N \to \infty} \frac{1}{N}\sum_{n=1}^N\int_{\mathbb{T}} e^{2\pi i n x} \ d\nu_f(x)= \\ \lim_{N \to \infty} \frac{1}{N}\sum_{n=1}^N \langle T^nf, f \rangle=\left| \int_X f \ d\mu\right|^2$.)
\end{proof}

\begin{remark}
Theorem \ref{equidistributionweak} can be extended to the context of locally compact abelian groups. We omit the details.
\end{remark}
\section{A characterization of countable amenable weakly mixing groups}
In this section, we will establish a characterization of countable weakly mixing groups with the help of the amenable version of Hindman's covering theorem (Theorem \ref{hind1}). 
Recall that a group $G$ is \emph{weakly mixing} (or minimally almost periodic) if any ergodic measure preserving action of $G$ on a probability space $\mathds{X}=(X,\mathcal{B},\mu)$ is automatically weakly mixing i.e., the diagonal action of $G$ on $\mathds{X} \times \mathds{X}$ is ergodic. (In this case, the diagonal action on an arbitrary finite product $\mathds{X} \times \dots \times \mathds{X}$ is also ergodic).
\\ \\
We begin with the following proposition:
\begin{proposition}\label{wmgr} Let $G$ be a countable amenable WM group. Let $E \subseteq G$ with $d^*(E)>0$ and let $d \in \N$. Then, for all $\varepsilon>0$, there exist $g_1,\dots,g_k \in G$ such that 
	\begin{equation}\label{dstar1} d^*\left(\bigcup_{i=1}^k (g_i,\dots,g_i)^{-1}\underbrace{(E \times \dots \times E)}_{d \ \textrm{times}} \right)>1-\varepsilon.
	\end{equation}
\end{proposition}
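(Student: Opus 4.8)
The plan is to reduce the multidimensional statement \eqref{dstar1} to the one-dimensional Hindman-type covering result (Theorem \ref{hind1}) applied to the product action, exploiting that a weakly mixing group yields ergodic diagonal actions on finite products. First I would invoke the ergodic Furstenberg correspondence principle, Theorem \ref{fc}, applied to the set $E \subseteq G$ with $d^*(E)>0$: this produces an ergodic measure preserving system $(X,\mathcal{B},\mu,(T_g)_{g \in G})$ together with a set $A \in \mathcal{B}$ satisfying $\mu(A)=d^*(E)>0$ and the covering-type inequality \eqref{fcineq}. The key point is to set up a correspondence for the $d$-fold product set $E \times \dots \times E \subseteq G \times \dots \times G$; to do this cleanly I would work with the diagonal action $(T_g \times \dots \times T_g)_{g \in G}$ of $G$ on the product space $(X^d, \mathcal{B}^{\otimes d}, \mu^{\otimes d})$ and the set $A^{\times d} := A \times \dots \times A$, which has $\mu^{\otimes d}(A^{\times d}) = \mu(A)^d = d^*(E)^d > 0$.

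The crucial step, and the one where the WM hypothesis enters, is to observe that since $G$ is weakly mixing, the diagonal action of $G$ on $X^d$ is \emph{ergodic} (this is precisely the content of the definition: ergodicity of the single action $(T_g)_{g\in G}$ together with weak mixing forces ergodicity of the diagonal action on any finite product). Granting this, the system $(X^d, \mu^{\otimes d}, (T_g \times \dots \times T_g)_{g \in G})$ is an ergodic measure preserving action of $G$, so by the ergodicity argument already used in the proof of Theorem \ref{hind1}, we have
\[ \mu^{\otimes d}\left( \bigcup_{g \in G} (T_g \times \dots \times T_g)^{-1} A^{\times d} \right) = 1. \]
Since $G$ is countable and $\mu^{\otimes d}(A^{\times d})>0$, continuity of the measure gives, for every $\varepsilon>0$, finitely many $g_1,\dots,g_k \in G$ with
\[ \mu^{\otimes d}\left( \bigcup_{i=1}^k (T_{g_i} \times \dots \times T_{g_i})^{-1} A^{\times d} \right) > 1-\varepsilon. \]

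The final step is to transfer this measure-theoretic lower bound back to a density lower bound for the product set via a correspondence inequality of the form \eqref{fcineq}, now applied to the product system: one wants
\[ d^*\left( \bigcup_{i=1}^k (g_i,\dots,g_i)^{-1}(E \times \dots \times E) \right) \geq \mu^{\otimes d}\left( \bigcup_{i=1}^k (T_{g_i} \times \dots \times T_{g_i})^{-1} A^{\times d} \right), \]
where $d^*$ on the left denotes the upper Banach density in the product group $G^d$ (equivalently, one may directly apply Theorem \ref{hind1} to the product set inside $G^d$ using the diagonal subaction, since $G^d$ is itself a countable amenable group and $E^{\times d}$ has positive upper Banach density there). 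Combining the two displayed inequalities yields \eqref{dstar1}.

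I expect the main obstacle to be the bookkeeping in the last step: the set $E \times \dots \times E$ lives in $G^d$, but the shifts $(g_i,\dots,g_i)$ are \emph{diagonal}, so one must be careful that the relevant correspondence inequality holds for the diagonal subaction rather than the full $G^d$-action, and that the upper Banach density used in \eqref{dstar1} is the one adapted to this diagonal structure. Verifying that the measure built in Theorem \ref{fc} (via quasi-generic points and a common F\o lner sequence) can be simultaneously used for the product set—i.e. that the single quasi-generic point $\omega$ for $E$ gives, through the diagonal, a quasi-generic point for $E^{\times d}$ along the same F\o lner sequence—is the technical heart of the argument and the place where one must check compatibility of the F\o lner averaging with the product construction.
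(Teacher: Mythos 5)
Your skeleton coincides with the paper's proof: apply Theorem \ref{fc} to $E$ to get an ergodic $(X,\mathcal{B},\mu,(T_g)_{g\in G})$ with $\mu(A)=d^*(E)$, use the WM hypothesis to make $\mu^{\otimes d}$ ergodic for the diagonal action on $X^d$, conclude $\mu^{\otimes d}\bigl(\bigcup_{g\in G}(T_g\times\dots\times T_g)^{-1}(A\times\dots\times A)\bigr)=1$, and extract finitely many $g_i$ by continuity of the measure. However, the step you defer to the end --- the transfer inequality $d^*\bigl(\bigcup_{i=1}^k(g_i,\dots,g_i)^{-1}(E\times\dots\times E)\bigr)\geq\mu^{\otimes d}\bigl(\bigcup_{i=1}^k(T_{g_i}\times\dots\times T_{g_i})^{-1}(A\times\dots\times A)\bigr)$ --- is the actual content of the paper's argument, and the two routes you sketch for closing it both fail. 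First, applying Theorem \ref{hind1} to $E\times\dots\times E$ inside the countable amenable group $G^d$ produces a covering by \emph{arbitrary} shifts $(h_1,\dots,h_d)\in G^d$, not diagonal ones, so it proves a strictly weaker statement than \eqref{dstar1}. Second, the claim that the quasi-generic point $\omega$ gives, ``through the diagonal, a quasi-generic point for $E^{\times d}$ along the same F\o lner sequence'' is false if read as diagonal averaging: the measures $\frac{1}{|G_N|}\sum_{g\in G_N}\delta_{(T_g\omega,\dots,T_g\omega)}$ are supported on the topological diagonal of $Y^d$, so any weak* limit is a diagonal self-joining of $\mu$ and never $\mu^{\otimes d}$ (except in degenerate cases). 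There is no usable correspondence principle for the ``diagonal subaction'' obtained this way.

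The correct mechanism, which is what the paper does, is to notice that the diagonal appears only in the \emph{set} being measured, not in the averaging. Let $(G_N)$ be the F\o lner sequence along which $\omega$ is quasi-generic for $\mu$ (Proposition \ref{orbit}); then $G_N\times\dots\times G_N$ is a F\o lner sequence in $G^d$, so $d^*$ in $G^d$ dominates $\bar{d}_{(G_N\times\dots\times G_N)}\bigl(\bigcup_{i=1}^k(g_i,\dots,g_i)^{-1}(E\times\dots\times E)\bigr)$, and this density can be computed exactly: by inclusion--exclusion one reduces to intersections, each intersection of diagonal shifts of $E\times\dots\times E$ is itself a product set of the form $\bigl(\bigcap_{i\in S}g_i^{-1}E\bigr)\times\dots\times\bigl(\bigcap_{i\in S}g_i^{-1}E\bigr)$, its density along the product F\o lner sequence factors into a product of densities along $(G_N)$, each factor exists and equals $\mu\bigl(\bigcap_{i\in S}(T_{g_i})^{-1}A\bigr)$ by quasi-genericity of $\omega$ (the relevant indicator functions being continuous on $Y$), and reassembling the inclusion--exclusion yields exactly $\mu^{\otimes d}\bigl(\bigcup_{i=1}^k(T_{g_i}\times\dots\times T_{g_i})^{-1}(A\times\dots\times A)\bigr)$. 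With this factorization argument substituted for your transfer step, the rest of your proposal is the paper's proof.
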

\begin{proof} By Theorem \ref{fc} there exists an ergodic measure preserving system $(X,\mathcal{B},\mu,(T_g)_{g \in G})$ and a set $A \in \mathcal{B}$ with $\mu(A)=d^*(E)$ satisfying \eqref{fcineq}. Since $G$ is a WM group, the measure $\mu \otimes \mu$ will be ergodic for the diagonal action on $X \times X$.
\\ \\
Let $(G_N)$ be a F\o lner sequence such that \\ $\mu=\textrm{w*-}\lim_{N \to \infty} \frac{1}{|G_N|}\sum_{g \in G_N} \delta_{T_g\omega}$, where $\omega=(\mathbb{1}_E(g))_{g \in G}$, as in Section 2. Notice that, for each $k \in \N$, we have
\begin{equation}\label{propsec5}
d^*\left(\bigcup_{i=1}^k (g_i,\dots,g_i)^{-1}(E \times \dots \times E) \right) \geq d_{(G_N \times \dots \times G_N)}\left(\bigcup_{i=1}^k (g_i,\dots,g_i)^{-1}(E \times \dots \times E) \right),
\end{equation}
for all $g_1,\dots,g_k \in G$. Applying the inclusion-exclusion principle we can change the unions in \eqref{propsec5} for intersections. The density of the typical intersection can be computed with a product of densities (note that $d_{(G_N \times G_N)}(E_1 \cap E_2 \times F_1 \cap F_2)=d_{(G_N)}(E_1 \cap E_2)d_{(G_N)}(F_1 \cap  F_2)$). Finally, using the definition of $\mu$ we see that the quantity on the right hand side in \eqref{propsec5} equals
\[ \mu \otimes \dots \otimes \mu \left( \bigcup_{i=1}^k (T_{g_i} \times \dots \times T_{g_i}) (A \times \dots \times A) \right)\]
Since $G$ is a WM group, the measure $\mu \otimes \dots \otimes \mu$ is ergodic for the diagonal $G$-action on the product space $X \times \dots \times X$. Since $\mu(A)>0$ we have $(\mu \otimes \dots \otimes \mu)(A \times \dots \times A)>0$, whence \\ $(\mu \otimes \dots \otimes \mu)\left(\bigcup_{g \in G} (T_g \times \dots \times T_g)^{-1}(A \times \dots \times A)\right)=1$, so the result follows by continuity of $\mu$.
\end{proof}
We will show next that the covering property \eqref{dstar1} characterizes weakly mixing groups:
\begin{theorem} Let $G$ be a countable amenable group that is not WM. Then there exists a set $E \subseteq G$ with $d^*(E) \in (0,1)$ such that for all $r \in \N$ and any finite subset $\{h_1,\dots,h_r\}$ of $G$ we have
	\begin{equation}\label{section5thm}d^*\left( \bigcup_{i=1}^r(h_i,h_i)^{-1}(E \times E)\right)\leq C<1,\end{equation}
  where the constant $C$ in \eqref{section5thm} is independent of $\{h_1,\dots,h_r\}$.
\end{theorem}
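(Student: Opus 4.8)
The plan is to prove this by manufacturing $E$ directly from a nontrivial finite-dimensional unitary representation of $G$, which the failure of the WM property supplies. Since $G$ is not WM, there is an ergodic system $(X,\mathcal B,\mu,(S_g)_{g\in G})$ that is not weakly mixing, so by the spectral characterization of weak mixing the Koopman representation on $L^2_0(\mu)$ contains a nonzero finite-dimensional $G$-invariant subspace $W$. Restricting to $W$ gives a finite-dimensional unitary representation $\rho\colon G\to U(W)$, and ergodicity forces $\rho$ to be nontrivial (otherwise $W$ would consist of nonconstant invariant functions). Setting $K:=\overline{\rho(G)}$, a nontrivial compact metrizable group, and $\phi:=\rho\colon G\to K$, I obtain a homomorphism with dense image, and the candidate set will be $E:=\phi^{-1}(U)$ for a carefully chosen $U\subseteq K$.

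The engine of the argument is that the left-translation action of $G$ on $K$ (and on $K\times K$) through $\phi$ is uniquely ergodic: any weak* limit of $\frac1{|F_N|}\sum_{g\in F_N}\delta_{\phi(g)}$ is invariant under left translation by the dense subgroup $\phi(G)$, hence by all of $K$, hence equals Haar measure $m_K$. Thus $\frac1{|F_N|}\sum_{g\in F_N}\delta_{\phi(g)}\to m_K$ along \emph{every} F\o lner sequence, and likewise for $\phi\times\phi$ with limit $m_K\times m_K$. By the portmanteau theorem, for every F\o lner sequence and every closed $C\subseteq K\times K$ one gets $\bar{d}_{(F_N)}((\phi\times\phi)^{-1}(C))\le (m_K\times m_K)(C)$, with the reverse inequality for open sets; taking the supremum over F\o lner sequences passes both bounds to $d^*$. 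Under the convention $g^{-1}E=\{x:gx\in E\}$ one computes $(h,h)^{-1}(E\times E)=(\phi\times\phi)^{-1}\big(\phi(h)^{-1}U\times\phi(h)^{-1}U\big)$, so writing $k_i:=\phi(h_i)$ the union in question is the $\phi\times\phi$-preimage of $V:=\bigcup_{i=1}^r (k_i^{-1}U)\times(k_i^{-1}U)$.

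The crucial geometric observation is that $V$ lies inside a set \emph{independent} of the $h_i$: if $(a,b)\in V$ then some $k_i\in Ua^{-1}\cap Ub^{-1}$, which forces $b^{-1}a\in U^{-1}U$, so $V\subseteq V_0:=\{(a,b)\in K\times K: b^{-1}a\in U^{-1}U\}$. By left invariance of Haar measure $(m_K\times m_K)(V_0)=m_K(U^{-1}U)$. Choosing $U$ to be a compact symmetric neighborhood of the identity with nonempty interior and $m_K(U^{-1}U)<1$ (using continuity of multiplication to place $U^{-1}U$ inside a prescribed proper open set of measure $<1$; for finite $K$ one simply takes $U=\{e\}$) makes $V_0$ closed, and the displayed equidistribution yields, for every finite $\{h_1,\dots,h_r\}$,
\[ d^*\Big(\bigcup_{i=1}^r (h_i,h_i)^{-1}(E\times E)\Big)\le d^*\big((\phi\times\phi)^{-1}(V_0)\big)\le (m_K\times m_K)(V_0)=m_K(U^{-1}U)=:C<1,\]
a bound independent of the $h_i$. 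The same equidistribution gives $m_K(U^\circ)\le d^*(E)\le m_K(U)$, so arranging $0<m_K(U^\circ)$ and $m_K(U)<1$ secures $d^*(E)\in(0,1)$.

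The step I expect to require the most care is the uniformity over all F\o lner sequences: everything hinges on upgrading ``$\phi$ equidistributes'' from convergence along one sequence to a statement valid for every F\o lner sequence at once, which is precisely what unique ergodicity of the translation action delivers, and on applying the portmanteau inequalities only to closed/open sets (hence the insistence that $U$ be compact with nonempty interior, so that $V_0$ is closed). A secondary point needing a short separate verification is the existence, in an arbitrary nontrivial compact group, of a symmetric $U$ with $0<m_K(U)$ and $m_K(U^{-1}U)<1$; this is routine but should be checked in both the infinite and the finite cases.
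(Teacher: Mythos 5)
Your proof is correct, and its skeleton matches the paper's: both extract a nontrivial finite-dimensional unitary representation from the failure of WM, pass to the compact group $K=\overline{\pi(G)}$ in which the image is dense, take $E$ to be the pullback of a small neighborhood, and bound the union of diagonal translates of $E\times E$ by a fixed proper subset of $K\times K$. But you execute all three technical steps differently, and in each case more economically. Where the paper cites Schmidt's theorem for the representation, you derive it from the spectral characterization of weak mixing applied to an ergodic non--weakly-mixing action (both routes are standard, and your check that ergodicity forces $\rho$ to be nontrivial is right). More substantively, the paper flags as ``essential'' the fact that F\o lner sequences in $G\times G$ need not be products, and handles it via the translate lemma of Beiglb\"ock--Bergelson--Fish to reduce $d^*$ in $G\times G$ to product F\o lner sequences, along which limits are evaluated using unique ergodicity of $(R_g)$ together with the regularity condition $\mu(\partial U')=0$; you bypass the reduction entirely, since $\phi(G)\times\phi(G)$ is dense in $K\times K$ and $G\times G$ is amenable, so empirical measures along \emph{any} F\o lner sequence of $G\times G$ converge to Haar measure, and the one-sided portmanteau inequalities (closed sets for upper bounds, open sets for lower) remove the need for any boundary-null hypothesis on $U$. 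One wording caveat here: what is uniquely ergodic is the $G\times G$-action on $K\times K$ through $\phi\times\phi$, not any $G$-action on $K\times K$ --- your parenthetical ``(and on $K\times K$)'' should say this explicitly, since the diagonal $G$-action on $K\times K$ is badly non-ergodic (that non-ergodicity is precisely what the theorem exploits); your subsequent use of $\phi\times\phi$ shows you intend the correct action. Finally, for the geometric bound the paper constructs a bi-invariant metric via Birkhoff--Kakutani, picks $g_0$ at maximal distance from $e$, and shows $\Delta\cdot(U\times U)$ misses the open set $B(g_0,\tfrac{b}{16})\times B(e,\tfrac{b}{16})$; your containment $\bigcup_i (k_i^{-1}U)\times(k_i^{-1}U)\subseteq V_0=\{(a,b):b^{-1}a\in U^{-1}U\}$ with $(m_K\times m_K)(V_0)=m_K(U^{-1}U)$ by Fubini identifies exactly the same obstruction (indeed $\Delta_K\cdot(U\times U)=V_0$), but replaces the metric estimate with a one-line continuity argument for choosing $U$ with $m_K(U^{-1}U)<1$, with the finite nontrivial $K$ handled by $U=\{e\}$. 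The net trade-off: your argument is more self-contained (no BBF lemma, no $\mu(\partial U')=0$ bookkeeping) at the cost of invoking the spectral characterization of weak mixing for general group actions rather than a citation; the paper's version, by computing exact limits along product F\o lner sequences, yields slightly more information than the inequalities you need, but that surplus is not used in the statement.
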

\begin{proof} Before constructing the set $E$, we need to do some preparatory work. First, we observe that by a Corollary of Lemma 3.3 in \cite{bergelsonfish}, (see equation (5) in \cite{bergelsonfish}) for any $B \subseteq G$, and for any F\o lner sequence $(F_n) \subseteq G$, there is a sequence $(t_n)$ such that
\[ d^*(B)=d_{F_nt_n}(B).\]
Hence, it suffices to show that for any pair of F\o lner sequences $(I_N)\subseteq G$ and $(J_N) \subseteq G$ there exists $0<C<1$ such that
\begin{equation}
    \bar{d}_{(I_N \times J_N)} \left( \bigcup_{i=1}^r (h_i,h_i)^{-1}(E \times E) \right) \leq C<1. 
\end{equation}
for all $r \in \N$ and all finite sets $\{h_1,\dots,h_r\}$. Note that this step is essential because general F\o lner sequences of $G \times G$ need not be of the form $(I_N \times J_N)$.
\\ \\
Next, we observe that since $G$ is not WM it must admit a non-trivial finite dimensional representation $\pi: G \rightarrow U(k)$, where $U(k)$ is the unitary group of $k \times k$ complex matrices (see for example \cite{schmidt}, Theorem 3.4). Thus, $H=\{ \pi(g) : g \in G\}$ is a non-trivial subgroup of $U(k)$.
\\ \\
Let $X=\overline{H}$, the closure of $H$ in $U(k)$. Then, $X$ is a compact metric group with the topology inherited from $\C^{k^2}$. As such, it carries a unique normalized Haar measure (fully supported on $X$) which we denote by $\mu$.
\\ \\
Moreover, $G$ acts on $X$ by translations as follows: let $R_gx:=\pi(g)\cdot x$. Note that the action $(R_g)_{g \in G}$ is minimal because $\pi(G)=H$ and $H$ is dense in $X$. Clearly, $(X,\textrm{Borel}(X),\mu,(R_g)_{g \in G})$ is a uniquely ergodic measure preserving system.
\\ \\
Let $d$ be a bi-invariant metric on $X$ (such a metric exists by the Birkhoff-Kakutani Theorem, see \cite{birk} and \cite{kaku}). Let $g_0 \in X$ be such that $d(e,g_0)=b=\max\{ d(e,g) : g \in G\}>0$ (since $X$ is compact and non-trivial, $0<b<\infty$). Let $U:=B(e,\frac{b}{16})$ (the open ball of radius $\frac{b}{16}$ centered at $e$). Clearly $\mu(U)>0$ since $\mu$ is fully supported on $X$. Let $U':=B(e,r)$ where $0<r<\frac{b}{16}$ is such that $\mu(\partial U')=0$. Such an $r$ must exist (see for example Lemma 2.21 in \cite{bcrz}).
At this point we are ready to define the set $E$. Namely, choose an arbitrary $x_0 \in X$ and put
\begin{equation}\label{definitione}
E:=\{g \in G: R_gx_0 \in U'\}.
\end{equation}
Since $(X,\mu,(R_g)_{g \in G})$ is uniquely ergodic and $\mu(\partial U')=0$, the function $\mathbb{1}_{U'}$ can be approximated by continuous functions. From these two facts, it follows that $\textrm{w*-}\lim_{N \to \infty} \frac{1}{|F_N|}\sum_{g \in F_N} \delta_{T_gx_0}$ converges to $\mu$.
\\ \\
Next, observe that for all $g \in X$, the open set $W:=B(g_0,\frac{b}{16}) \times B(e,\frac{b}{16})$ satisfies
\begin{equation}\label{diagonalempty}
(g,g)^{-1}(U \times U) \cap W=\emptyset,
\end{equation}
because otherwise, by the triangle inequality we would get that $d(e,g)\leq d(e,g_1u)+d(g_1u_1,g_1u_2)+d(g_1u_2,g_0)<2\cdot \frac{b}{16}+d(u_1,u_2)<\frac{b}{8}+\frac{b}{8}<b$, where $g_1 \in G$ and $u_1,u_2 \in U$ are such that $d(e,g_1u_1)<\frac{b}{16}$ and $d(g_0,g_1u_2)<\frac{b}{16}$, a contradiction with our choice of $e, g_0$.
\\ \\
It follows from \eqref{diagonalempty} and from the fact that $G$ acts minimally on $X$ that $\overline{\Delta\cdot (U\times U)}\neq X \times X$, where $\Delta$ is the diagonal in $X^2$. Thus,
\[(\mu\otimes \mu)(\Delta\cdot(U \times U)):=C<1.\]
Note also that since $\mu$ is fully supported on $X$, $\mu \otimes \mu$ is fully supported on $X \times X$, and so we have $(\mu \otimes \mu)(\Delta \cdot (U \times U)>0$. Let $(I_N), (J_N)$ be two F\o lner sequences in $G$, and $\{h_1,\dots,h_r\}$ a finite set of elements of $G$. Then,
\begin{equation}\bar{d}_{(I_N \times J_N)}\left(\bigcup_{i=1}^r(h_i,h_i)^{-1}(E \times E)\right)=\limsup_{N\to \infty} \frac{1}{|I_N||J_N|}\sum_{g\in F_N} \sum_{h \in J_N} \mathbb{1}_{\bigcup_{i=1}^r(h_i,h_i)^{-1}(E \times E)}(g,h). \label{diag2}\end{equation}
By \eqref{definitione}, the last term in equation \eqref{diag2} is equal to
\begin{equation} \limsup_{N\to \infty} \frac{1}{|I_N||J_N|}\sum_{g\in F_N} \sum_{h \in J_N} \mathbb{1}_{\bigcup_{i=1}^r(T_{h_i}\times T_{h_i})^{-1}(U' \times U')}(T_gx_0,T_hx_0). \label{diag3}\end{equation}
We now use the inclusion-exclusion principle to separate the variables in each summand. Let $\prod_{i=1}^r \mathbb{1}_{T_{h_i}U'}$, $1 \leq t \leq r$, be a typical term obtained through this process. By unique ergodicity of the action $(R_g)_{g \in G}$ we see that for any F\o lner sequence $(F_N)$
\[ \lim_{N \to \infty} \frac{1}{|F_N|}\sum_{g \in F_N} \mathbb{1}_{\bigcap_{i=1}^tT_{h_i}U'}(T_gx_0)=\mu\left(\bigcap_{i=1}^tT_{h_i}U'\right). \]
Indeed, since $U'$ is an open set with $\mu(\partial U')=0$, we also have that $\mu(\partial (T_gU'))=0$ for all $g \in G$ given that $T_g(\partial U')=\partial(T_gU')$ as $T_g$ is a measure preserving homeomorphism. Consequently, functions of the form $\prod_{i=1}^t \mathbb{1}_{T_{g_i}U'}$, $1 \leq t \leq r$, can be approximated by continuous functions. Thus, the limit in equation \eqref{diag3} in fact exists and (after performing the inclusion-exclusion principle backwards) is equal to 
\begin{equation}(\mu \otimes \mu)\left(\bigcup_{i=1}^r (T_{h_i} \times T_{h_i})^{-1}(U' \times U')\right) \leq(\mu\otimes \mu)(\Delta\cdot(U \times U))=C<1, \end{equation}
which completes the proof.
\end{proof}
\section{A general form of Hindman's covering theorem}
One may wonder if a version of Hindman's covering theorem (Theorem \ref{hind1}) is valid for discrete amenable semigroups which are not necessarily countable or cancellative. In this section we will show that this is indeed the case. 
\\ \\
Recall that a discrete semigroup $G$ is left amenable if there exists a left invariant mean $m: \ell^{\infty}(G) \rightarrow \C$ \footnote{We say that $m \in \ell^{\infty}(G)^*$ is a left \emph{invariant mean} if it is a continuous linear functional from $\ell^{\infty}(G)$ to $\C$ such that (i) for every $f \in \ell^{\infty}(G)$ and for every $g \in G$ we have $m({}_gf)=m(f)$, where ${}_gf(x):=f(gx)$ for all $x \in G$, (ii) $m(f) \geq 0$ for any bounded function $f: G \rightarrow [0,\infty)$, and (iii) $m(\mathbb{1}_G)=1$.}. (Note that, for discrete countable groups, this is equivalent to the definition of left amenability given in Section 2.)  
\\ \\
In the context of means, a notion of largeness presents itself. Let us denote by $\mathfrak{M}(G)$ the space of left invariant means on $G$. We say a subset $E$ of a discrete left amenable semigroup $G$ is large if $m(\mathbb{1}_E)>0$ for some mean $m \in \mathfrak{M}(G)$. This leads to the following definition of a notion of upper Banach density that is valid in all amenable semigroups (see Definition 2.7 \cite{bg}):
\begin{definition}\label{densitysemigroup}
Let $G$ be an amenable semigroup, and let \\ $E \subseteq G$. The \emph{upper Banach density} of $E$ is
\[ d^*(E):=\sup\{m(\mathbb{1}_E) : m \in \mathfrak{M}(G).\} \]
\end{definition}
Notice that since $\mathfrak{M}(G)$ is weak* compact there is some $m \in \mathfrak{M}(G)$ that achieves the supremum in Definition \ref{densitysemigroup}. If $G$ is a discrete countable amenable group and $E \subseteq G$, then the upper Banach density of $E$ as given in Definition \ref{densitygroups} agrees with the one in Definition \ref{densitysemigroup}. Moreover, in this case, we have $d^*(E)=\max \{ m(\mathbb{1}_E) : m \in \mathfrak{M}(G)\}$. 
We are now in a position to formulate a general version of Hindman's theorem.
\begin{theorem}\label{hindmansemi} 
	Let $G$ be an amenable semigroup, and let $E$ be a subset of $G$ with $d^*(E)>0$. Then, for every $\varepsilon>0$, there exists $k \in \N$ and $g_1,\dots,g_k \in G$ such that $d^*(g_1^{-1}E \cup \dots \cup g_k^{-1}E)>1-\ve$.
\end{theorem}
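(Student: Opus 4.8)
The plan is to run a semigroup analogue of the Gelfand-theoretic correspondence principle sketched in the Introduction, replacing F\o lner averaging by invariant means and quasi-generic points by a fixed-point argument. First I would fix a mean $m_0 \in \mathfrak{M}(G)$ achieving $d^*(E) = m_0(\mathbb{1}_E) =: \alpha > 0$ (such a mean exists by weak-* compactness of $\mathfrak{M}(G)$). Left translation ${}_gf(x) := f(gx)$ defines an action of $G$ on $\ell^\infty(G)$ by $*$-automorphisms under which $m_0$ is invariant; let $\mathcal{A}_E$ be the smallest $G$-invariant unital $C^*$-subalgebra of $\ell^\infty(G)$ containing $\mathbb{1}_E$. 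By Gelfand's representation theorem $\mathcal{A}_E \cong C(X)$ for a compact Hausdorff space $X$, the translation action is implemented by homeomorphisms $(S_g)_{g \in G}$ (a genuine left action $S_{gh}=S_gS_h$, since dualizing reverses the order of composition), and the idempotent $\mathbb{1}_E$ corresponds to a clopen set $A \subseteq X$. By Riesz representation, $m_0|_{\mathcal{A}_E}$ is given by an $S$-invariant regular Borel probability measure $\mu$ on $X$ with $\mu(A) = \alpha$, and each translate $\mathbb{1}_{g^{-1}E} = {}_g\mathbb{1}_E$ corresponds to $\mathbb{1}_{S_g^{-1}A}$, so that any finite union $\bigcup_i g_i^{-1}E$ corresponds to $\bigcup_i S_{g_i}^{-1}A$.

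Next I would pass to an ergodic component. The set of $S$-invariant regular Borel probability measures on $X$ is a weak-* compact convex (Choquet) simplex whose extreme points are precisely the ergodic measures; applying the Choquet--Bishop--de Leeuw ergodic decomposition to $\mu$ and using that $\rho \mapsto \rho(A)$ is weak-* continuous (as $A$ is clopen), the identity $\int \rho(A)\,d\lambda(\rho) = \mu(A) = \alpha > 0$ yields an ergodic $S$-invariant measure $\rho$ with $\rho(A) > 0$. Now set $W := \bigcup_{g \in G} S_g^{-1}A$. Since $\rho(S_g^{-1}A) = \rho(A) > 0$ we have $\rho(W) > 0$, and since $S_h^{-1}W = \bigcup_{g} S_{gh}^{-1}A \subseteq W$ together with measure preservation forces $S_h^{-1}W = W$ up to a null set for every $h \in G$, ergodicity gives $\rho(W) = 1$. (This step never uses an identity element, which is convenient as $G$ need not be a monoid.) Because $\rho$ is regular and $W$ is open, inner regularity produces a compact $C \subseteq W$ with $\rho(C) > 1 - \ve$; compactness of $C$ against the open cover $\{S_g^{-1}A\}_{g \in G}$ then yields finitely many $g_1, \dots, g_k \in G$ with $\rho\big(\bigcup_{i=1}^k S_{g_i}^{-1}A\big) \geq \rho(C) > 1 - \ve$. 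This is exactly where non-countability of $G$ is absorbed: the finite subcover is extracted topologically rather than through countable additivity.

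It remains to transfer this estimate back to $d^*$, which is the crux of the argument. The measure $\rho$ restricts, via Gelfand, to a $G$-invariant state $\rho'$ on $\mathcal{A}_E$, but a priori $\rho'$ need not be dominated by $d^*$. The key lemma I would prove is that \emph{every $G$-invariant state on a $G$-invariant unital $C^*$-subalgebra $\mathcal{A} \subseteq \ell^\infty(G)$ extends to an invariant mean on $\ell^\infty(G)$}. To see this, let $K$ be the set of states on $\ell^\infty(G)$ extending $\rho'$; it is nonempty (Hahn--Banach), convex, and weak-* compact. Since $\mathcal{A}$ is $G$-invariant and $\rho'$ is invariant, $K$ is invariant under the affine, weak-* continuous left action $(g \cdot \sigma)(f) := \sigma({}_gf)$, and Day's fixed-point theorem for left amenable semigroups provides a fixed point $\tilde\rho \in K$, which is the desired invariant mean. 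Then, since $F := \bigcup_{i=1}^k g_i^{-1}E$ lies in $\mathcal{A}_E$ and $\tilde\rho$ extends $\rho'$,
\[ d^*(F) \;\geq\; \tilde\rho(\mathbb{1}_F) \;=\; \rho'(\mathbb{1}_F) \;=\; \rho\Big(\bigcup_{i=1}^k S_{g_i}^{-1}A\Big) \;>\; 1-\ve, \]
which completes the proof. The main obstacles are thus the two places where metrizability and countability are unavailable: the ergodic decomposition, handled by Choquet theory, and the passage from the ergodic component back to $d^*$, handled by the state-extension lemma built on Day's fixed-point theorem (the device that replaces the quasi-generic point of Proposition \ref{orbit} in the uncountable setting).
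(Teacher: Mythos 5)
Your proof is correct, but it takes a genuinely different route from the paper's. The paper (Theorem \ref{ergbeta} together with Lemmas \ref{ergthmmeans}, \ref{sec6lem1} and \ref{sec6lem2}) works with all of $\ell^\infty(G)\cong C(\beta G)$: it applies Choquet's theorem directly to the compact convex set $\mathfrak{M}(G)$, so the extreme point $\tilde m$ it extracts is automatically an invariant mean and no extension step is needed; the finite covering is then proved for \emph{abstract} ergodic systems, handling the uncountability of $G$ by a supremum-over-countable-unions contradiction argument whose engine is the Ryll-Nardzewski unique invariant mean on $\mathrm{WAP}(G)$ applied to matrix coefficients (Eberlein), followed by continuity of the measure. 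You instead decompose on the small Gelfand spectrum $X$ of $\mathcal{A}_E$, which buys you a topological replacement for Lemma \ref{sec6lem2}: since each $S_g^{-1}A$ is clopen, the full union $W$ is open and almost invariant, ergodicity gives $\rho(W)=1$, and inner regularity plus compactness extract the finite subcover --- no WAP theory at all. The price is exactly the gap you identified: an invariant state on $\mathcal{A}_E$ is not a priori dominated by $d^*$, and your extension lemma via Day's fixed-point theorem for left amenable semigroups is a correct and clean way to pay it (the paper avoids this cost by decomposing inside $\mathfrak{M}(G)$ from the start). Three small repairs: (i) for a general non-cancellative semigroup, $f\mapsto {}_gf$ is only a unital $*$-endomorphism and $S_g$ only a continuous self-map of $X$, not an automorphism/homeomorphism --- harmless, since your argument uses only preimages $S_g^{-1}$; (ii) since $X$ need not be metrizable, the Choquet step requires Bishop--de Leeuw, and to locate an ergodic $\rho$ with $\rho(A)>0$ you should note that $\{\rho : \rho(A)>0\}$ is a cozero (hence Baire) set of positive $\lambda$-measure, so pseudo-support on the extreme points suffices (the paper's proof of Theorem \ref{ergbeta} elides the same point for $\mathfrak{M}(G)$); (iii) ``extreme $=$ ergodic'' must be taken in the sense that sets invariant modulo null sets are trivial, which is what your argument for $W$ uses; this does follow from extremality by conditioning $\rho$ on an almost invariant set of intermediate measure. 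With these glosses your proof is complete, and it is arguably more self-contained than the paper's, at the cost of losing the paper's Lemma \ref{sec6lem2}, which is a statement about arbitrary ergodic systems and has independent interest.
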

The proof of Theorem \ref{hindmansemi} requires a few preliminary results which will be given next. Recall that the space of \emph{weakly almost periodic} functions on $G$, denoted by $\textrm{WAP}(G)$ is comprised of functions $f \in \ell^{\infty}(G)$ such that the weak closure of their shifts, i.e. $\overline{\{ _gf : g \in G\}}$ is weakly compact (i.e., compact with respect to the weak topology induced by functionals on $\ell^{\infty}(G)$) .
\begin{theorem}[Ryll-Nardzewski, cf. \cite{paterson}, page 86] There is a unique left invariant mean on $\textrm{WAP}(G)$.
\end{theorem}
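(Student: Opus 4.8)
The plan is to construct the invariant mean through the Ryll-Nardzewski fixed point theorem applied to weakly closed convex hulls of orbits, and then to read off uniqueness from a short weak-continuity argument. After reducing to real-valued functions (by treating real and imaginary parts separately), I would fix $f \in \textrm{WAP}(G)$ and form
\[ K_f := \overline{\textrm{co}}^{\,w}\{ {}_g f : g \in G\} \subseteq \ell^{\infty}(G), \]
the weakly closed convex hull of the left orbit. Since $f$ is weakly almost periodic, the orbit $\{{}_g f : g \in G\}$ is relatively weakly compact, so by the Krein--\v{S}mulian theorem $K_f$ is a nonempty, weakly compact, convex set.

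Next I would exhibit an invariant constant inside $K_f$. Each left translation $L_s \colon \phi \mapsto {}_s \phi$ is a bounded linear (hence weakly continuous) affine map carrying $K_f$ into itself, and the semigroup they generate is noncontracting (this is immediate when $G$ is a group, where the $L_s$ are surjective isometries of $(\ell^{\infty}(G), \|\cdot\|_{\infty})$). The Ryll-Nardzewski fixed point theorem then yields a common fixed point $\phi \in K_f$; the identity ${}_s \phi = \phi$ for all $s \in G$ forces $\phi$ to be a constant function, say $\phi = M(f)\mathbb{1}$. The delicate point -- and the step I expect to be the main obstacle -- is that this constant is the \emph{only} constant in $K_f$. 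This is precisely where weak almost periodicity must be used in an essential way: via Grothendieck's double-limit criterion (equivalently, the simultaneous weak compactness of the right orbit), one shows that the left- and right-averaging procedures applied to $f$ converge to one and the same scalar, which pins down the constant uniquely. Granting this, $M \colon \textrm{WAP}(G) \to \R$ is well defined.

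It then remains to verify that $M$ is a left invariant mean and that it is the unique one. Positivity and $M(\mathbb{1}) = 1$ are immediate, and linearity follows by running the same construction on the joint orbit $\{({}_g f, {}_g h) : g \in G\}$ in $\ell^{\infty}(G) \oplus \ell^{\infty}(G)$, so that a single averaging scheme handles $f$ and $h$ at once; applying the continuous map $(\alpha,\beta) \mapsto \alpha + \beta$ and invoking uniqueness of the constant in $K_{f+h}$ gives $M(f+h) = M(f) + M(h)$. Left invariance is automatic, since $K_{{}_g f} \subseteq K_f$ forces the unique constants to agree, i.e. $M({}_g f) = M(f)$. Finally, uniqueness of the mean drops out: if $m$ is \emph{any} left invariant mean, then $m$ is a bounded, hence weakly continuous, functional with $m({}_g f) = m(f)$ for all $g$, so $m(K_f)$ cannot escape the single point $m(f)$; evaluating at the constant $M(f)\mathbb{1} \in K_f$ gives $M(f) = m(M(f)\mathbb{1}) = m(f)$. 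As $f$ was arbitrary, $m = M$, which establishes both existence and uniqueness.
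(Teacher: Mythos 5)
The paper offers no proof of this statement; it is quoted verbatim from Paterson's book, so there is no internal argument to compare against. Judged on its own, your outline is the classical Ryll-Nardzewski proof (and essentially the one in the cited source), and for \emph{groups} it is correct: $K_f$ is weakly compact by Krein--\v{S}mulian, the translations are weakly continuous affine isometries and hence form a noncontracting family, the common fixed point is a constant, and since by Mazur's theorem $K_f$ is also the \emph{norm}-closed convex hull of the orbit, any left invariant mean $m$, being norm (hence weakly) continuous and constant on convex combinations of translates, is constant on $K_f$, which yields $m(f)=M(f)$. You correctly isolate the crux --- that $K_f$ contains at most one constant --- and name the right tool. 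The one substantive elaboration your sketch needs there: the double-limit criterion by itself compares a constant in the \emph{left} hull with a constant in the \emph{right} hull, so you must first produce such a right-hull constant by running the fixed point argument on the right orbit (legitimate, since Grothendieck's criterion is symmetric, so $f$ is also right weakly almost periodic); then the exchange of iterated limits --- using that weak convergence in $\ell^{\infty}(G)$ implies pointwise convergence, which is all the double averages require --- identifies every constant in the left hull with that single right-hull constant. Your joint-orbit device for additivity is valid: the pair orbit is relatively weakly compact in the product of the weak topologies, the fixed point is a pair of constants, and projecting and summing gives $M(f+h)=M(f)+M(h)$.

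One genuine caution, concerning scope rather than your reasoning: your proof covers only groups (as you acknowledge, noncontraction is where the group structure enters), whereas the paper invokes the theorem for discrete \emph{semigroups} (e.g., in Lemma \ref{ergthmmeans}). This is not a gap you could have closed, because the statement is false for arbitrary discrete semigroups: in the two-element right-zero semigroup ($xy=y$ for all $x,y$) one has $gx=x$, so ${}_g f=f$ for every $f$, every mean on $\ell^{\infty}(G)=\textrm{WAP}(G)$ is left invariant, and uniqueness fails --- even though this semigroup is left amenable. So the group hypothesis (or some substitute) in the classical Ryll-Nardzewski theorem is essential, and the mismatch lies in how the paper cites and applies the result, not in your argument.
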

We will be using the fact (due to Eberlein, \cite{eberlein}) that for a discrete semigroup $G$, functions of the form $f(g)=\langle h_1, T_gh_2\rangle$, for $h_1, h_2 \in L^2(\mu)$ are in $\textrm{WAP}(G)$ (see, for example, Theorem 3.1 in \cite{burckel}).
\begin{lemma}\label{ergthmmeans} Let $G$ be a discrete semigroup and let $(X,\mathcal{B},\mu,(T_g)_{g \in G})$ be an ergodic measure preserving system. Let $m$ be the unique left invariant mean on $\textrm{WAP}(G)$ and $f_1, f_2 \in L^2(\mu)$. Then,
\[ m(\langle f_1, T_gf_2\rangle)=\int_X f_1 \ d\mu \int_X \bar{f}_2 \ d\mu\]
\end{lemma}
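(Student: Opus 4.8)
The plan is to realize the left-invariant mean $m$ as an orthogonal projection onto the invariant vectors of the Koopman representation and then use ergodicity to collapse that subspace to the constants. First I would note that, by Eberlein's theorem quoted above, for fixed $f_1,f_2\in L^2(\mu)$ the function $\phi(g):=\langle f_1,T_gf_2\rangle$ lies in $\mathrm{WAP}(G)$, so $m(\phi)$ is well defined, and by the Ryll--Nardzewski theorem $m$ is the \emph{unique} left-invariant mean there. The assignment $(f_1,f_2)\mapsto m(\langle f_1,T_gf_2\rangle)$ is a bounded sesquilinear form on $L^2(\mu)$: each $T_g$ is an isometry (the system is measure preserving), so $\|\phi\|_\infty\le\|f_1\|_2\|f_2\|_2$, while $m$, being positive and normalized, has norm one, whence $|m(\phi)|\le\|f_1\|_2\|f_2\|_2$. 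By the Riesz representation theorem for bounded sesquilinear forms there is a bounded operator $P$ on $L^2(\mu)$ with $m(\langle f_1,T_gf_2\rangle)=\langle f_1,Pf_2\rangle$ for all $f_1,f_2$.

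The heart of the argument is to show that $Pf_2$ is $G$-invariant. For fixed $h\in G$, left-invariance of $m$ gives $m(\phi)=m({}_h\phi)$, where ${}_h\phi(g)=\phi(hg)=\langle f_1,T_{hg}f_2\rangle$. Writing $T_{hg}=T_hT_g$ and moving $T_h$ to the first slot via its adjoint, one rewrites $m({}_h\phi)=\langle T_h^*f_1,Pf_2\rangle=\langle f_1,T_hPf_2\rangle$; comparing with $m(\phi)=\langle f_1,Pf_2\rangle$ and letting $f_1$ range over $L^2(\mu)$ yields $T_hPf_2=Pf_2$ for every $h$. Thus $Pf_2$ is a common fixed vector of the representation, and since the system is ergodic the only such vectors are the constants, so $Pf_2=c\,\mathbb{1}_X$ for a scalar $c$ depending linearly on $f_2$. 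To pin down $c$, I would test against $f_1=\mathbb{1}_X$: on one hand $\langle\mathbb{1}_X,Pf_2\rangle=\bar c$, and on the other $\langle\mathbb{1}_X,T_gf_2\rangle=\int_X\bar{f}_2\,d\mu$ is independent of $g$ (by measure preservation), so $m$ of it equals $\int_X\bar{f}_2\,d\mu$; hence $c=\int_X f_2\,d\mu$. Substituting back gives $m(\langle f_1,T_gf_2\rangle)=\langle f_1,(\int_X f_2\,d\mu)\,\mathbb{1}_X\rangle=\int_X f_1\,d\mu\int_X\bar{f}_2\,d\mu$, as claimed.

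The main obstacle is precisely this invariance step: it requires translating the left-invariance of $m$, a statement about shifts of functions on $G$, into invariance of the \emph{vector} $Pf_2$ under the $T_h$. This rests on matching the action convention ($T_{gh}=T_gT_h$, so that the left shift ${}_h\phi$ produces $T_h$ in the first argument after passing to the adjoint) with the \emph{left}-invariance of $m$, and on the fact that each $T_h$ is an isometry whose adjoint $T_h^*$ is available even when $T_h$ is not invertible (the genuinely semigroup case). If the chosen convention instead places the shift in the second argument, one obtains the dual identity $PT_h=P$; in that case I would finish by combining $P|_{\mathcal I}=\mathrm{Id}$ (immediate, since $\langle f_1,T_gv\rangle$ is constant for invariant $v$) with the mean-ergodic decomposition $L^2(\mu)=\mathcal I\oplus\overline{\mathrm{span}}\{v-T_hv\}$, which reduces to the same computation. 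Everything else---boundedness, the scalar computation, and the passage through ergodicity to the constants---is routine.
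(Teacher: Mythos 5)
Your proposal is correct and follows essentially the same route as the paper's proof: represent $m(\langle f_1,T_gf_2\rangle)$ via Riesz representation (the paper fixes $f_2$ and produces the vector $F_2$, you package the same thing as an operator $P$), use left-invariance of $m$ together with the adjoint $T_h^*$ to show the resulting vector is $G$-invariant, invoke ergodicity to conclude it is constant, and compute the constant by testing against $\mathbb{1}_X$. Your explicit verification of boundedness of the sesquilinear form and your attention to the action convention are small refinements the paper leaves implicit, but the argument is the same.
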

\begin{proof}
Let $F_2 \in L^2(\mu)$ be defined via
\[ \langle F_1, F_2 \rangle=m(\langle F_1, T_g f_2 \rangle)\]
for all $F_1 \in L^2(\mu)$. We will show that $F_2=\int_X \bar{f}_2 \ d\mu$, which implies the result. First observe that since the action $(T_g)_{g \in G}$ is ergodic, the only invariant functions are the constants. Next, we show that $T_{h}F_2=F_2$ for all $h \in G$, which implies $F_2$ is a constant by ergodicity. Indeed,
\[ \langle F_1, T_{h}F_2\rangle=\langle (T_{h})^*F_1, F_2 \rangle=m(\langle (T_{h})^*F_1, T_gf_2\rangle=m(\langle F_1, (T_{hg})f_2 \rangle)=\]
\[m(\langle F_1, T_gf_2 \rangle)=\langle F_1, F_2 \rangle. \]
Hence, $F_2$ is a constant, so $F_2=\langle F_2, \mathbb{1}_X \rangle$. Thus, for all $f_1 \in L^2(\mu)$ we have 
\[ \int_X f_1 \cdot \bar{F}_2 \ d\mu=\langle f_1, F_2 \rangle=\langle f_1, \langle F_2,\mathbb{1}_X \rangle \rangle=\langle \langle f_1,\mathbb{1}_X \rangle, F_2 \rangle=\]
\[m(\langle \langle f_1, \mathbb{1}_X \rangle, (T_{g})f_2 \rangle)=m\left( \int_X f_1 \ d\mu \int_X T_g \bar{f}_2 \ d\mu \right)=\int_X f_1 \ d\mu \int_X \bar{f}_2 \ d\mu, \]
so we are done.
\end{proof}
\begin{remark} It follows from Lemma \ref{ergthmmeans} that for any (not necessarily ergodic) measure preserving action of a discrete semigroup $G$ on a probability space $(X,\mathcal{B},\mu)$ and any $A \in \mathcal{B}$ we have
\[ m(\mu(A \cap (T_g)^{-1}A)) \geq \mu^2(A).\]
Indeed, invoking a general form of the ergodic decomposition we have
\[ m(\mu(A \cap (T_g)^{-1}A))=m\left( \int_{\Omega} \mu_t(A \cap (T_g)^{-1}A)\ d\nu(t)\right),\]
where $\mu_t$ is an ergodic measure for $\nu$-a.e. $t \in \Omega$. Then, since $m$ is linear, Lemma \ref{ergthmmeans} implies
\[ m\left( \int_{\Omega} \mu_t(A \cap (T_g)^{-1}A)\ d\nu(t)\right)=\int_{\Omega} m(\mu_t(A \cap (T_g)^{-1}A)) \ d\nu(t)=\int_{\Omega} \mu_t(A)^2 \ d\nu(t).\]
Applying Jensen's inequality we get
\[ \int_{\Omega} \mu_t(A)^2 \ d\nu(t) \geq \left( \int_{\Omega} \mu_t(A) \ d\nu(t) \right)^2=\mu^2(A),\]
as desired.
\end{remark}
\begin{lemma}\label{sec6lem1}
Let $G$ be a discrete semigroup and let $(X,\mathcal{B},\mu,(T_g)_{g \in G})$ an ergodic measure preserving system. Let $A, B \in \mathcal{B}$ with $\mu(A)>0$ and $\mu(B)>0$. Then,
\begin{equation}\label{lemma1eq1}
    R_{A,B}:=\left\{ g \in G : \mu(B \cap (T_{g})^{-1}A)>\frac{\mu(A)\mu(B)}{2}\right\} \neq \emptyset. \footnote{As is easily seen from the proof of Lemma \ref{sec6lem1}, one actually has that $R_{A,B,\lambda}:=\left\{ g \in G : \mu(B \cap (T_{g})^{-1}A)>\lambda\mu(A)\mu(B)\right\}\neq \emptyset$ for any $\lambda \in (0,1)$.}
\end{equation}
\end{lemma}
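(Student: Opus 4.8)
The plan is to recognize $g \mapsto \mu(B \cap T_g^{-1}A)$ as precisely the type of inner-product function to which Lemma \ref{ergthmmeans} applies, and then to extract the conclusion from positivity of the invariant mean. First I would write, for the unitary $G$-representation on $L^2(\mu)$ given by $(T_gf)(x)=f(T_gx)$,
\[ \mu(B \cap (T_g)^{-1}A)=\int_X \mathbb{1}_B \cdot (T_g\mathbb{1}_A) \ d\mu=\langle \mathbb{1}_B, T_g\mathbb{1}_A\rangle, \]
where the last equality uses that $\mathbb{1}_A$ is real-valued (this is exactly the representation already used in the Remark following Lemma \ref{ergthmmeans}, with $A$ and $B$ now distinct). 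By the theorem of Eberlein quoted above, the function $\varphi(g):=\langle \mathbb{1}_B, T_g\mathbb{1}_A\rangle$ belongs to $\textrm{WAP}(G)$, so the unique left invariant mean $m$ on $\textrm{WAP}(G)$ is defined on $\varphi$.

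Next I would apply Lemma \ref{ergthmmeans} with $f_1=\mathbb{1}_B$ and $f_2=\mathbb{1}_A$, which computes the mean of $\varphi$ exactly:
\[ m(\varphi)=\int_X \mathbb{1}_B \ d\mu \int_X \overline{\mathbb{1}_A} \ d\mu=\mu(B)\mu(A)>0, \]
the strict positivity being guaranteed by the hypotheses $\mu(A)>0$ and $\mu(B)>0$. Finally, I would argue by contradiction: if $R_{A,B}=\emptyset$, then $\varphi(g)\leq \tfrac{1}{2}\mu(A)\mu(B)$ for every $g \in G$, i.e. the constant function $\tfrac{1}{2}\mu(A)\mu(B)\,\mathbb{1}_G-\varphi$ is nonnegative. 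Since $m$ is a mean, hence a positive functional with $m(\mathbb{1}_G)=1$ and therefore monotone, this forces $m(\varphi)\leq \tfrac{1}{2}\mu(A)\mu(B)$, contradicting $m(\varphi)=\mu(A)\mu(B)>0$. Thus $R_{A,B}\neq \emptyset$. Replacing $\tfrac{1}{2}$ by an arbitrary $\lambda \in (0,1)$, the identical monotonicity argument (using $\lambda<1$) yields the strengthening $R_{A,B,\lambda}\neq\emptyset$ recorded in the footnote.

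In truth there is no substantial obstacle remaining once Lemma \ref{ergthmmeans} and the WAP membership are in hand; the only points requiring care are bookkeeping ones. I would make sure the conjugation convention in $\langle\cdot,\cdot\rangle$ matches so that $\langle \mathbb{1}_B, T_g\mathbb{1}_A\rangle$ really equals $\mu(B\cap T_g^{-1}A)$ (harmless here since both indicators are real), and I would explicitly invoke that $\varphi \in \textrm{WAP}(G)$ so that $m(\varphi)$ is even meaningful. The single conceptual step worth flagging is that the whole argument lives on $\textrm{WAP}(G)$ rather than on all of $\ell^\infty(G)$: this is why the existence and \emph{uniqueness} of the invariant mean (Ryll-Nardzewski) together with Eberlein's theorem are essential, and why the proof succeeds in the general, possibly non-cancellative and uncountable, amenable-semigroup setting.
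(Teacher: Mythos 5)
Your proof is correct and follows essentially the same route as the paper's: both argue by contradiction, apply Lemma \ref{ergthmmeans} with $f_1=\mathbb{1}_B$, $f_2=\mathbb{1}_A$ to get $m(\mu(B\cap (T_g)^{-1}A))=\mu(A)\mu(B)$, and conclude from positivity of the mean. Your write-up is in fact slightly more careful than the paper's, since you explicitly invoke Eberlein's theorem to justify that $g\mapsto\langle \mathbb{1}_B, T_g\mathbb{1}_A\rangle$ lies in $\textrm{WAP}(G)$ (the paper records this fact only in the surrounding discussion) and you note the monotonicity step and the footnote's $\lambda$-generalization explicitly.
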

\begin{proof}
We proceed by contradiction. Assume that for all $g \in G$ we have $\mu(B \cap (T_g)^{-1}A) \leq \frac{\mu(A)\mu(B)}{2}$. Let $m$ be the unique left invariant mean on $\textrm{WAP}(G)$. Let $A, B \in \mathcal{B}$ with $\mu(A)>0$ and $\mu(B)>0$. By Lemma \ref{ergthmmeans}, we have
\begin{equation}\label{equality2lem1}
    m(\mu(B \cap (T_{g})^{-1}A))=\mu(A)\mu(B).
\end{equation}
Equation \eqref{equality2lem1} contradicts the assumption that, for all $g \in G$, $\mu(B \cap (T_g)^{-1}A) \leq \frac{\mu(A)\mu(B)}{2}$, (given that $m(f)\geq 0$ if $f \geq 0$) so we are done.
\end{proof}
\begin{lemma}\label{sec6lem2}
Let $G$ be a discrete semigroup and $(X,\mathcal{B},\mu,(T_g)_{g \in G})$ an ergodic measure preserving system. Let $A \in \mathcal{B}$ with $\mu(A)>0$. Then, for all $\varepsilon>0$, there are $g_1,\dots,g_k \in G$ such that 
\[ \mu\left(\bigcup_{i=1}^k (T_{g_i})^{-1}A\right)>1-\varepsilon.\]
\end{lemma}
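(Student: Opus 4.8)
The plan is to build the translates greedily, using Lemma~\ref{sec6lem1} to guarantee that at each stage a suitable new translate captures a definite proportion of the still-uncovered mass, and to show that the covered measure then increases by a fixed positive amount at every step, forcing termination after finitely many steps.

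First I would dispose of the trivial case $\varepsilon \geq 1$ and assume $0 < \varepsilon < 1$. Set $U_0 := \emptyset$ and proceed inductively: having chosen $g_1,\dots,g_j$ and formed $U_j := \bigcup_{i=1}^j (T_{g_i})^{-1}A$, suppose $\mu(U_j) \leq 1-\varepsilon$. Then $B := X \setminus U_j$ is measurable with $\mu(B) \geq \varepsilon > 0$, so Lemma~\ref{sec6lem1}, applied to the pair $A$, $B$, produces an element $g_{j+1} \in G$ with
\[ \mu\big(B \cap (T_{g_{j+1}})^{-1}A\big) > \frac{\mu(A)\mu(B)}{2} \geq \frac{\mu(A)\varepsilon}{2}. \]
Setting $U_{j+1} := U_j \cup (T_{g_{j+1}})^{-1}A$ and noting that the newly added mass is exactly $\mu\big((T_{g_{j+1}})^{-1}A \cap B\big)$, we obtain
\[ \mu(U_{j+1}) - \mu(U_j) = \mu\big((T_{g_{j+1}})^{-1}A \cap B\big) > \frac{\mu(A)\varepsilon}{2}. \]
The key observation is that this increment is bounded below by the fixed positive constant $\frac{\mu(A)\varepsilon}{2}$, independent of $j$. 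Since $\mu(U_j) \leq 1$ for every $j$, the induction cannot continue indefinitely: for $k := \lceil 2/(\mu(A)\varepsilon)\rceil$ the hypothesis $\mu(U_j) \leq 1-\varepsilon$ must fail for some $j < k$, and we arrive at a union $\bigcup_{i=1}^j (T_{g_i})^{-1}A$ of measure $> 1-\varepsilon$, which is the desired cover.

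I expect the entire difficulty of the statement to have been front-loaded into Lemma~\ref{sec6lem1} (and, behind it, Lemma~\ref{ergthmmeans} together with the Ryll--Nardzewski uniqueness of the invariant mean on $\mathrm{WAP}(G)$): it is precisely the nonemptiness of $R_{A,B}$ --- equivalently, the fact that the mean of $g \mapsto \mu(B \cap (T_g)^{-1}A)$ equals $\mu(A)\mu(B) > 0$ --- that lets us find a productive translate at each stage. The only point requiring genuine care is to floor $\mu(B)$ away from $0$ by invoking $\mu(U_j) \leq 1-\varepsilon$; passing to the complement of the current union is exactly what makes the increments uniformly positive and forces termination, whereas an arbitrary new translate could add negligible new mass. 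Note that, in contrast to the countable-group argument of Theorem~\ref{hind1}, no appeal to continuity of the measure along a countable union is needed here --- the argument is finitary and applies verbatim to arbitrary, possibly uncountable and non-cancellative, amenable semigroups.
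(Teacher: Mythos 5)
Your proof is correct, and it rests on the same key input as the paper's --- Lemma \ref{sec6lem1} applied with $B$ the complement of the currently covered set --- but the architecture is genuinely different. The paper argues by contradiction: it considers the supremum of $\mu\bigl(\bigcup_{g\in B}(T_g)^{-1}A\bigr)$ over \emph{countable} subsets $B\subseteq G$, shows via Lemma \ref{sec6lem1} that this supremum must equal $1$ (any putative value $1-\delta<1$ can be strictly exceeded by adjoining one well-chosen translate $g_0$), and only then finitizes, using continuity of $\mu$ along an increasing sequence of finite subunions of a countable union. Your greedy induction shortcuts both steps: as long as the process continues, the uncovered set $B=X\setminus U_j$ satisfies $\mu(B)\geq\varepsilon$, so each application of Lemma \ref{sec6lem1} adds mass more than $\mu(A)\varepsilon/2$ --- a lower bound uniform in $j$ --- and the process must halt within $\lceil 2/(\mu(A)\varepsilon)\rceil$ steps. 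This buys you two things the paper's proof does not give: an explicit bound on the number $k$ of translates needed, depending only on $\mu(A)$ and $\varepsilon$, and, as you correctly observe, the elimination of any appeal to continuity of $\mu$ along countable unions --- the paper's detour through subsets $B$ with $|B|\leq|\N|$ exists precisely to make that continuity step available, and your finitary argument renders it unnecessary, while (like the paper's) working for arbitrary, possibly uncountable, discrete semigroups $G$. Your bookkeeping is sound: the increments are strict, so after $j$ productive steps $\mu(U_j)>j\,\mu(A)\varepsilon/2\,$, and the loop condition $\mu(U_j)\leq 1-\varepsilon$ indeed fails within the stated number of steps; the trivial case $\varepsilon\geq 1$ is also properly dispatched by a single translate, since $\mu\bigl((T_g)^{-1}A\bigr)=\mu(A)>0$.
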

\begin{proof}
Let $A \in \mathcal{B}$ with $\mu(A)>0$. We claim that 
\begin{equation}\label{eq2lem2}
    \sup \left\{ \mu\left(\bigcup_{g \in B} (T_g)^{-1}A\right) : B\subseteq G \textrm{ and } |B| \leq|\N|\right\}=1.
\end{equation}
We proceed by contradiction. Let us assume that this is not the case. Then there exists $0<\delta<1$ such that 
\begin{equation}\label{eq1lem2}
    \sup \left\{ \mu\left(\bigcup_{g \in B} (T_g)^{-1}A\right) : B\subseteq G \textrm{ and }  |B| \leq |\N|\right\}=1-\delta.
\end{equation}
Let $0<\varepsilon'<\frac{\mu(A)\delta}{2}$, and choose $B \subseteq G$ with $|B| \leq|\N|$ such that 
\begin{equation}
    \mu \left( \bigcup_{g \in B} (T_g)^{-1}A \right)\geq 1-\delta-\varepsilon'.
\end{equation}
By assumption, there is some $C \subseteq X \setminus \bigcup_{g \in B} (T_g)^{-1}A $ with \\ $\mu(C) \geq \delta$. Now, by Lemma \ref{sec6lem1}, we can find $g_0 \in G$ such that $\mu(C \cap (T_{g_0})^{-1}A) \geq \frac{\mu(A)\mu(C)}{2}$. (Notice that, in particular, it follows that $g_0 \notin B$). We have
\begin{equation} \mu\left( \bigcup_{g \in B \cup \{g_0\}} (T_g)^{-1}A \right) \geq 1-\delta-\varepsilon'+\mu(C \cap (T_{g_0})^{-1}A)\geq 1-\delta-\varepsilon'+\frac{\mu(A)\delta}{2}>1-\delta, \end{equation}
by our choice of $\varepsilon'$ and $g_0$. This contradicts the definition of supremum, since clearly $B \cup \{g_0\}$ is still a countable subset of $G$. Thus, \eqref{eq2lem2} holds. 
\\ \\
To complete the proof, let $\varepsilon>0$ and choose $B \subseteq G$ with $|B|\leq |\N|$ such that 
\[ \mu \left( \bigcup_{g \in B} (T_g)^{-1}A \right)>1-\frac{\varepsilon}{2}.\]
Since $\mu$ is continuous and $B$ is countable, there exist $g_1,\dots,g_k \in B$ such that
\[ \mu \left( \bigcup_{i=1}^k (T_{g_i})^{-1}A \right)>1-\frac{\varepsilon}{2}-\frac{\varepsilon}{2},\]
so we are done.
\end{proof}
Before proving an ergodic version of Furstenberg's correspondence principle for means, we formulate two additional results. We start with a version of Theorem \ref{fcfolner} for means, which we will juxtapose with its ergodic counterpart, Theorem \ref{ergbeta}, below.
\begin{theorem}[Furstenberg correspondence principle for means (cf. \cite{bl1} and \cite{bmc})]\label{fc2}
Let $G$ be a discrete amenable semigroup. Let $m \in \mathfrak{M}(G)$, $E \subseteq G$ with $m(\mathbb{1}_E)>0$. Then there exists a probability measure preserving system $(X,\mathcal{B},\mu,(T_g)_{g \in G})$ such that $X$ is compact and Hausdorff, $\mathcal{B}=\textrm{Borel}(X)$ and $(T_g)_{g \in G}$ is a $G$-action on $X$ by continuous self-maps of $X$. Finally, $A$ is a set in $\mathcal{B}$ for which $\mu(A)=m(\mathbb{1}_B)$, and $\mu$ is such that for all $k \in \N$, $g_1,\dots,g_k \in G$ we have
\begin{equation}\label{equality0} m\left(\mathbb{1}_E\prod_{i=1}^k \mathbb{1}_{g_i^{-1}E}\right)=\mu(A\cap T_{g_1}^{-1}A \cap \dots \cap T_{g_k}^{-1}A). \end{equation} 
\end{theorem}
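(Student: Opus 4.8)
The plan is to realize the invariant mean $m$ as an honest Borel probability measure by means of the Gelfand representation, exactly along the lines sketched for $\bar{d}$ in the Introduction (this is the method naturally adapted to means, since a mean need not arise from averages along a F\o lner sequence and so there is no generic point to exploit as in the proof of Theorem \ref{fcfolner}). First I would let $\mathcal{A}\subseteq \ell^{\infty}(G)$ be the unital commutative $C^*$-subalgebra generated by the left translates $\{{}_g\mathbb{1}_E : g\in G\}$, where ${}_gf(x)=f(gx)$, so that ${}_g\mathbb{1}_E=\mathbb{1}_{g^{-1}E}$. The crucial point is that this generating family is invariant under every left shift, since ${}_h({}_g\mathbb{1}_E)={}_{gh}\mathbb{1}_E$; hence $\mathcal{A}$ is left-shift-invariant, and being an algebra it contains every finite product $\mathbb{1}_E\prod_{i=1}^k\mathbb{1}_{g_i^{-1}E}=\mathbb{1}_{E\cap g_1^{-1}E\cap\dots\cap g_k^{-1}E}$. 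By the Gelfand representation theorem, $\mathcal{A}$ is isometrically $*$-isomorphic to $C(X)$ for a compact Hausdorff space $X$ (its spectrum); write $\Gamma\colon\mathcal{A}\to C(X)$ for this isomorphism. Note that $X$ need not be metrizable, which is consistent with the statement asking only for $X$ compact and Hausdorff.

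Next I would produce the action together with the measure. Each left shift ${}_g$ restricts to a unital $*$-endomorphism of $\mathcal{A}$, so by Gelfand duality it is induced by a continuous self-map $T_g\colon X\to X$ via $\Gamma({}_gf)=\Gamma(f)\circ T_g$. Tracking the composition law is what fixes the order: from ${}_h{}_g={}_{gh}$ one gets $\Gamma(f)\circ T_g\circ T_h=\Gamma({}_h{}_gf)=\Gamma({}_{gh}f)=\Gamma(f)\circ T_{gh}$, so $T_{gh}=T_g\circ T_h$ and $(T_g)_{g\in G}$ is a genuine $G$-action by continuous (not necessarily invertible) self-maps, as befits a semigroup. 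For the measure, I would restrict $m$ to $\mathcal{A}$ and transport it through $\Gamma^{-1}$ to a state on $C(X)$; by the Riesz--Markov representation theorem this state is integration against a regular Borel probability measure $\mu$ on $X$, i.e. $m(f)=\int_X\Gamma(f)\,d\mu$ for all $f\in\mathcal{A}$. Left-invariance of $m$ then upgrades to $T_g$-invariance of $\mu$: for $F=\Gamma(f)\in C(X)$ we have $\int_X F\circ T_g\,d\mu=\int_X\Gamma({}_gf)\,d\mu=m({}_gf)=m(f)=\int_X F\,d\mu$, and since $\Gamma$ is onto $C(X)$ this forces $(T_g)_*\mu=\mu$. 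Thus $(X,\mathrm{Borel}(X),\mu,(T_g)_{g\in G})$ is a probability measure preserving system with $G$ acting by continuous self-maps.

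Finally I would define $A$ and verify the identity. Since $\mathbb{1}_E$ is a self-adjoint idempotent of $\mathcal{A}$, its image $\Gamma(\mathbb{1}_E)$ is a self-adjoint idempotent of $C(X)$, hence of the form $\mathbb{1}_A$ for a clopen set $A\subseteq X$; then $\mu(A)=\int_X\Gamma(\mathbb{1}_E)\,d\mu=m(\mathbb{1}_E)$, as required. For the main identity I combine the two homomorphism properties of $\Gamma$: on one hand $\Gamma(\mathbb{1}_{g_i^{-1}E})=\Gamma({}_{g_i}\mathbb{1}_E)=\mathbb{1}_A\circ T_{g_i}=\mathbb{1}_{T_{g_i}^{-1}A}$, and on the other hand $\Gamma$ is multiplicative, so
\[ \Gamma\Big(\mathbb{1}_E\prod_{i=1}^k\mathbb{1}_{g_i^{-1}E}\Big)=\mathbb{1}_A\prod_{i=1}^k\mathbb{1}_{T_{g_i}^{-1}A}=\mathbb{1}_{A\cap T_{g_1}^{-1}A\cap\dots\cap T_{g_k}^{-1}A}. \]
Integrating against $\mu$ and using $m(f)=\int_X\Gamma(f)\,d\mu$ then yields \eqref{equality0} exactly.

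The $C^*$-algebra and Riesz--Markov bookkeeping is routine; the one place that requires genuine care is the passage from shifts to dynamics, namely checking that the endomorphisms ${}_g$ descend to an order-correct $G$-action by continuous self-maps of the spectrum and that left-invariance of $m$ is precisely what delivers invariance of $\mu$. A secondary point worth spelling out explicitly is that, because $G$ is merely a semigroup, the maps $T_g$ need not be homeomorphisms, so invariance must be phrased as $(T_g)_*\mu=\mu$ and the preimages $T_{g}^{-1}A$ used throughout, rather than invoking any inverse map.
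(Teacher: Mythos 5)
Your proposal is correct and takes essentially the paper's own route: the paper (deferring to \cite{bl1}) realizes $m$ as a $G$-invariant regular Borel probability measure on $\beta G$, which is precisely the Gelfand spectrum of $\ell^{\infty}(G)$ with the left translations extended to continuous self-maps and $A=\overline{E}$ clopen, while you run the identical Gelfand--Riesz duality on the subalgebra generated by the translates of $\mathbb{1}_E$, obtaining a factor of $\beta G$; your care with the composition law $T_{gh}=T_g\circ T_h$ and with phrasing invariance as $(T_g)_*\mu=\mu$ matches what the semigroup setting demands. One pedantic repair: since a semigroup need not contain an identity, $\mathbb{1}_E$ itself should be adjoined to the generating family $\{{}_g\mathbb{1}_E : g\in G\}$ (shift-invariance of the family is unaffected, as each ${}_h\mathbb{1}_E$ is already a generator), because your final step needs $\mathbb{1}_E\in\mathcal{A}$ in order to define $A$ via $\Gamma(\mathbb{1}_E)=\mathbb{1}_A$.
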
	
\begin{proof} The proof for discrete amenable groups provided in \cite{bl1} extends verbatim to our context without any major modification.
\end{proof}
\begin{remark} The proof of Theorem \ref{fc2} in \cite{bl1} can be easily adjusted to include unions and complements as in Theorem \ref{fcfolner}.
\end{remark}
The other result we need can be found in \cite{paterson}:
\begin{theorem}[Proposition 0.1 \cite{paterson}] \label{pat} The space of left invariant means $\mathfrak{M}(G)$ is a weak*-compact, convex spanning subset of $\ell^{\infty}(G)^*$.
\end{theorem}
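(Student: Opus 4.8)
The plan is to verify the three asserted properties — convexity, weak*-compactness, and the spanning property — separately, exploiting the fact that $\ell^\infty(G)$ is a unital commutative C*-algebra whose dual carries a Jordan-type decomposition.

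Convexity is immediate. If $m_0, m_1 \in \mathfrak{M}(G)$ and $t \in [0,1]$, then $m_t := (1-t)m_0 + tm_1$ is positive, satisfies $m_t(\mathbb{1}_G)=1$, and for every $g \in G$ and $f \in \ell^\infty(G)$ one has $m_t({}_g f) = (1-t)m_0({}_g f) + t\,m_1({}_g f) = (1-t)m_0(f) + t\,m_1(f) = m_t(f)$. Thus all three defining conditions of a left invariant mean pass to convex combinations.

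For weak*-compactness I would first note that any mean satisfies $\|m\| = m(\mathbb{1}_G) = 1$, so $\mathfrak{M}(G)$ lies in the closed unit ball of $\ell^\infty(G)^*$, which is weak*-compact by the Banach--Alaoglu theorem; nonemptiness of $\mathfrak{M}(G)$ is exactly the hypothesis that $G$ is left amenable. It then suffices to show that $\mathfrak{M}(G)$ is weak*-closed, as a closed subset of a compact set is compact. This I would obtain by writing $\mathfrak{M}(G)$ as an intersection of weak*-closed sets: the positivity conditions $\{m : m(f) \geq 0\}$ for each $f \geq 0$, the normalization $\{m : m(\mathbb{1}_G) = 1\}$, and the invariance conditions $\{m : m({}_g f) - m(f) = 0\}$ for each $g \in G$ and $f \in \ell^\infty(G)$. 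Each is the preimage of a closed set under a weak*-continuous evaluation $m \mapsto m(h)$ (with $h$ equal to $f$, to $\mathbb{1}_G$, or to ${}_g f - f$), hence weak*-closed, and an intersection of weak*-closed sets is weak*-closed.

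The spanning property is the step I expect to require the most care, both because its precise meaning must be pinned down and because of the semigroup subtlety. The natural reading is that the real-linear span of the means recovers the relevant part of $\ell^\infty(G)^*$. Here I would use that, $\ell^\infty(G)$ being a commutative C*-algebra, every bounded functional $\phi$ admits a Jordan decomposition $\phi = (\phi_1 - \phi_2) + i(\phi_3 - \phi_4)$ into positive functionals, each of which, after normalization, is a nonnegative multiple of a mean; this shows the means span $\ell^\infty(G)^*$. To transfer this to invariant functionals one argues that left translation acts on $\ell^\infty(G)$ by positivity-preserving algebra endomorphisms, so that for an invariant $\phi$ the uniqueness of the decomposition forces the positive parts $\phi_j$ to be invariant as well, whence each is a multiple of an invariant mean. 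The main obstacle is precisely this last point in the semigroup case: since left translation by $g$ need not be invertible, one cannot directly invoke uniqueness of the Jordan decomposition under an automorphism and must instead argue more carefully (or appeal to the relevant structure theory from \cite{paterson}) to conclude invariance of the Jordan components.
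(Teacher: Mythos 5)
The paper contains no proof of this statement: it is imported verbatim from Paterson \cite{paterson} (Proposition 0.1), so your argument can only be judged on its own merits rather than against an in-paper proof. Your treatment of convexity and weak*-compactness is correct and is the standard argument (Banach--Alaoglu plus weak*-closedness of the positivity, normalization and invariance conditions, with nonemptiness being the amenability hypothesis); nothing to add there.

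On the spanning part, you were right to hesitate over the meaning. Read literally, the left invariant means cannot span $\ell^{\infty}(G)^*$ for any nontrivial $G$, since their linear span consists of left invariant functionals only. In Paterson, Proposition 0.1 concerns the set of \emph{all} means, which does span the dual by exactly the Jordan-decomposition argument you give; your reinterpretation in the invariant setting (every left invariant bounded functional is a combination of left invariant means) is the sensible reading. Note also that the paper only ever uses compactness and convexity, namely when applying Choquet's theorem in the proof of Theorem \ref{ergbeta}, so the spanning clause is not load-bearing there.

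The one genuine gap is the step you explicitly left open, but the ``obstacle'' you flag in the semigroup case is not actually one: uniqueness of the Jordan decomposition does not require the translations to be invertible, only that the adjoint of translation preserve positivity and the norm of positive functionals. Since $f \mapsto {}_gf$ is a positive unital map on $\ell^{\infty}(G)$ (it fixes $\mathbb{1}_G$), its adjoint $L_g^*$ sends positive functionals to positive functionals with $\|L_g^*\psi\| = (L_g^*\psi)(\mathbb{1}_G) = \psi(\mathbb{1}_G) = \|\psi\|$ for all $\psi \geq 0$. Hence if $\phi$ is hermitian and invariant, with Jordan decomposition $\phi = \phi_+ - \phi_-$ satisfying $\|\phi\| = \|\phi_+\| + \|\phi_-\|$, then $\phi = L_g^*\phi = L_g^*\phi_+ - L_g^*\phi_-$ is another decomposition into positive parts whose norms again sum to $\|\phi\|$, and uniqueness of the norm-additive decomposition forces $L_g^*\phi_{\pm} = \phi_{\pm}$; invariance of the hermitian parts of a complex invariant functional is immediate since ${}_g\overline{f} = \overline{{}_gf}$. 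With these three lines inserted, your argument is complete, for semigroups just as for groups.
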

We are now in position to formulate and prove a general version of the ergodic Furstenberg correspondence principle.
\begin{theorem}[Ergodic Furstenberg correspondence principle for means]\label{ergbeta}
Let $E \subseteq G$ be such that $m(\mathbb{1}_E)>0$ for some mean \\ $m \in \mathfrak{M}(G)$. Then there exists a mean $\tilde{m} \in \mathfrak{M}(G)$, and an ergodic measure preserving system $(X,\mathcal{B},\mu,(T_g)_{g \in G})$ such that for all $k \in \N$ we have
\begin{equation}\label{ergbetaz}
\tilde{m}(\mathbb{1}_{E^{w_0} \star g_1^{-1}E^{w_1}\star \dotso \star g_k^{-1}E^{w_k}})=\mu(A^{w_0} \star (T_{g_1})^{-1}A^{w_1} \star \dots \star (T_{g_k})^{-1}A^{w_k}),
\end{equation}
where $A \in \mathcal{B}$ is such that $0<\mu(A)=m(\mathbb{1}_E)\leq \tilde{m}(\mathbb{1}_E)$, and each of the stars denotes either union or intersection with the understanding that 
\begin{itemize}
\item[(i)] for all $1 \leq i \leq k-1$, the operation represented by $\star$ which stands between $E^{w_i}$ and $E^{w_{i+1}}$ is the same as the operation appearing between $A^{w_i}$ and $A^{w_{i+1}}$.
\item[(ii)] the choices of parentheses which are needed to make the expressions on both sides of formula \eqref{fcfolnerineq} well defined also match. 
\end{itemize}
\end{theorem}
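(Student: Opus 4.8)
The plan is to build on the non-ergodic correspondence principle for means (Theorem \ref{fc2}, together with the Remark extending it to unions and complements) and then to pass to an ergodic component by means of the ergodic decomposition, exactly as the passage from Theorem \ref{fcfolner} to Theorem \ref{fc} was carried out in Section 2. The one genuinely new ingredient is that, in the absence of a F\o lner sequence along which to average, the ``change of averaging scheme'' that produces ergodicity must be implemented at the level of invariant means, through a Hahn--Banach extension argument. First I would reduce to the case where $m$ realizes the upper Banach density, i.e.\ $m(\mathbb{1}_E)=d^*(E)=\max\{m'(\mathbb{1}_E):m'\in\mathfrak{M}(G)\}$; this maximum is attained by weak* compactness of $\mathfrak{M}(G)$ (Theorem \ref{pat}). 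Applying Theorem \ref{fc2} and its union/complement refinement to this $m$ yields a measure preserving system $(X,\mathcal{B},\mu_0,(T_g)_{g\in G})$ with $X$ compact Hausdorff and a clopen set $A$ with $\mu_0(A)=m(\mathbb{1}_E)=d^*(E)$, in such a way that the correspondence realizes the uniformly closed $G$-invariant unital subalgebra $\mathcal{A}_E\subseteq\ell^\infty(G)$ generated by $\{{}_g\mathbb{1}_E:g\in G\}$ as $C(X)$, sending each indicator $\mathbb{1}_{E^{w_0}\star\dots\star g_k^{-1}E^{w_k}}$ to $\mathbb{1}_{A^{w_0}\star\dots\star(T_{g_k})^{-1}A^{w_k}}$ and $m|_{\mathcal{A}_E}$ to integration against $\mu_0$.

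Next I would apply the ergodic decomposition to $\mu_0$. Writing $\mu_0=\int\mu\,d\lambda(\mu)$ as a barycenter over the ergodic (equivalently, extreme) $G$-invariant measures on $X$ --- valid for semigroup actions on a compact space by Choquet's theorem, in its Bishop--de Leeuw form when $X$ is not metrizable --- the identity $\mu_0(A)=\int\mu(A)\,d\lambda(\mu)$ furnishes an ergodic measure $\mu$ with $\mu(A)\ge\mu_0(A)=d^*(E)$. This $\mu$, together with $X$ and $A$, will be the ergodic system claimed in the statement.

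The heart of the argument is the construction of the mean $\tilde m$. Since $\mu$ is $G$-invariant, pulling it back through $\mathcal{A}_E\cong C(X)$ defines a positive, unital, left-invariant functional $\tilde L$ on $\mathcal{A}_E$. I would then extend $\tilde L$ to a left-invariant mean on all of $\ell^\infty(G)$ in two steps. First extend $\tilde L$ to a state $\psi\in\ell^\infty(G)^*$ by Hahn--Banach (a norm-one unital extension is automatically positive). Then symmetrize using amenability of $G$ by setting $\tilde m(f):=M\big(h\mapsto\psi({}_hf)\big)$ for a fixed invariant mean $M\in\mathfrak{M}(G)$. A direct check shows $\tilde m$ is positive, unital and left-invariant, so $\tilde m\in\mathfrak{M}(G)$; and $\tilde m$ agrees with $\tilde L$ on $\mathcal{A}_E$, since for $\phi\in\mathcal{A}_E$ one has ${}_h\phi\in\mathcal{A}_E$ and $\psi({}_h\phi)=\tilde L({}_h\phi)=\tilde L(\phi)$ by invariance of $\tilde L$, whence the average in $h$ collapses to $\tilde L(\phi)$. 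This step, reconciling a Hahn--Banach extension with left-invariance while preserving the prescribed values on $\mathcal{A}_E$, is the main obstacle and is exactly where amenability of $G$ enters.

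Finally, applying $\tilde m|_{\mathcal{A}_E}=\tilde L$ to the indicator $\mathbb{1}_{E^{w_0}\star\dots\star g_k^{-1}E^{w_k}}\in\mathcal{A}_E$ yields \eqref{ergbetaz}. In particular, the single-generator case gives $\tilde m(\mathbb{1}_E)=\mu(A)$; since $\tilde m\in\mathfrak{M}(G)$ forces $\tilde m(\mathbb{1}_E)\le d^*(E)$, this combines with $\mu(A)\ge d^*(E)$ to pin down $\mu(A)=\tilde m(\mathbb{1}_E)=d^*(E)=m(\mathbb{1}_E)$, establishing $0<\mu(A)=m(\mathbb{1}_E)\le\tilde m(\mathbb{1}_E)$ and completing the proof.
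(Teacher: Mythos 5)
Your proof is correct, but it reaches \eqref{ergbetaz} by a genuinely different route than the paper. The paper never leaves the dual space: invoking Theorem \ref{pat}, it applies Choquet's theorem directly to the compact convex set $\mathfrak{M}(G)$, writes $m=\int_{\mathrm{Ext}(\mathfrak{M}(G))} m_t\,d\lambda(t)$, picks an extreme mean $\tilde m$ with $\tilde m(\mathbb{1}_E)\geq m(\mathbb{1}_E)>0$, and transports $\tilde m$ through the isomorphism $(\ell^\infty(G))^*\cong C(\beta G)^*$, under which extreme invariant means become extreme, hence ergodic, invariant measures on $\beta G$; since $C(\beta G)$ is all of $\ell^\infty(G)$, formula \eqref{ergbetaz} is then automatic and no extension problem ever arises. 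You instead decompose the measure $\mu_0$ on the spectrum $X$ of the subalgebra $\mathcal{A}_E$ and must then reconstruct an invariant mean from the chosen ergodic component; your Hahn--Banach-plus-averaging device $\tilde m(f):=M\bigl(h\mapsto\psi({}_hf)\bigr)$ is the right tool and does work: left-invariance follows from ${}_h({}_gf)={}_{gh}f$ together with the invariance of $M$, and the prescribed values survive because $\mathcal{A}_E$ is shift-invariant and $\tilde L$ is already invariant on it, so the average collapses to $\tilde L(\phi)$. What your route buys is the sharper normalization $\mu(A)=\tilde m(\mathbb{1}_E)=d^*(E)$, in exact parallel with Theorem \ref{fc}; what the paper's route buys is economy, since working on $\beta G$ makes the mean--measure dictionary tautological. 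Two caveats, neither fatal. First, your opening reduction to a maximizing mean yields $\mu(A)=d^*(E)\geq m(\mathbb{1}_E)$ rather than the literal clause $\mu(A)=m(\mathbb{1}_E)$ of the statement; but that clause is internally inconsistent as printed (taking $k=1$ in \eqref{ergbetaz} once with union and once with intersection and adding, using invariance of $\tilde m$ and $\mu$, forces $\mu(A)=\tilde m(\mathbb{1}_E)$), and the paper's own proof likewise produces only $\mu(A)=\tilde m(\mathbb{1}_E)\geq m(\mathbb{1}_E)$, so your conclusion is, if anything, stronger. Second, both you and the paper invoke Choquet in a non-metrizable setting, where Bishop--de Leeuw gives a representing measure only pseudo-supported on the extreme points; since $\nu\mapsto\nu(A)$ is weak*-continuous ($\mathbb{1}_A$ being clopen, hence continuous), you could sidestep this entirely by taking $\mu$, via Krein--Milman, to be an extreme point of the closed face of invariant measures maximizing $\nu\mapsto\nu(A)$ --- such a point is extreme in the whole set of invariant measures and hence ergodic.
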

\begin{proof} First, we remark that, as in the proof of Theorem \ref{fc2}, any invariant mean on $G$ is given by a $G$-invariant probability measure on $\beta G$, the Stone-\v{C}ech compactification of $G$, via the isomorphism $(\ell^{\infty}G)^* \cong C(\beta G)^*$\footnote{See \cite{gillmanjerison} for the background on Stone-\v{C}ech compactifications.}. It is easy to see that this isomorphism is behind the formula \eqref{ergbetaz}.
\\ \\
By Choquet's theorem (which we can apply in view of Theorem \ref{pat}), we can write
\begin{equation}\label{choquetintegral}
m=\int_{\textrm{Ext}(\mathfrak{M}(G))} m_t \ d\lambda(t),
\end{equation}
for some probability measure $\lambda$ supported on $\textrm{Ext}(\mathfrak{M}(G))$, the set of extreme points of $\mathfrak{M}(G)$.
\\ \\
Notice that extreme points of $\mathfrak{M}(G)$ get mapped to extreme points of the set of probability measures on $\beta G$ via the isomorphism $(\ell^{\infty}(G))^* \cong C(\beta G)^*$. It is well known that the measures that are extreme points in the set of $G$-invariant probability measures on $\beta G$ are in fact ergodic.
\\ \\
It follows from formula \eqref{choquetintegral}, that since $m(\mathbb{1}_E)>0$, we have that $m_t(\mathbb{1}_E)>0$ for a set of $t$ of positive $\lambda$-measure. 
\\ \\
Thus we can choose $\tilde{m}$, an extreme point of $\mathfrak{M}(G)$, for which $\tilde{m}(\mathbb{1}_E)\geq m(\mathbb{1}_E)>0$. Using the aforementioned isomorphism, we obtain an ergodic measure $\mu$ for which \eqref{ergbetaz} holds. 
\end{proof}
\begin{remark} It is worth noticing that the ergodicity of the system $(X,\mathcal{B},\mu,(T_g)_{g \in G})$ in Theorem \ref{ergbeta} stems from the fact that passed from the mean $m$ to $\tilde{m}$. If one restricts oneself to only using a given mean $m$, as in Theorem \ref{fc2}, ergodicity cannot be guaranteed. We also see in the statement of Theorem \ref{ergbeta} that moving form $m$ to $\tilde{m}$ does not affect combinatorial applications, as we can require $\tilde{m}(\mathbb{1}_E) \geq m(\mathbb{1}_E)$. This situation is similar to the one discussed in the comments at the end of Section 2.
\end{remark}
We are now ready for a proof of Theorem \ref{hindmansemi}:
\begin{proof}[Proof of Theorem \ref{hindmansemi}]
Let $E \subseteq G$ with $d^*(E)>0$. Then, there is some $m_0 \in \mathfrak{M}(G)$ such that $m_0(\mathbb{1}_E)>0$. Let $(X,\mathcal{B},\mu,(T_g)_{g \in G})$ be an ergodic measure preserving system satisfying the equality \eqref{ergbetaz} for some set $A \in \mathcal{B}$ with $\mu(A)>0$. (See Theorem \ref{ergbeta} above). Let $E \subseteq G$ with $d^*(E)>0$. Then, there is some $m_0 \in \mathfrak{M}(G)$ such that $m_0(\mathbb{1}_E)>0$. Let $(X,\mathcal{B},\mu,(T_g)_{g \in G})$ be an ergodic measure preserving system satisfying the equality \eqref{ergbetaz} for some set $A \in \mathcal{B}$ with $\mu(A)>0$. (See Theorem \ref{ergbeta} above). 
\\ \\
Let $E \subseteq G$ with $d^*(E)>0$. Then, there is some $m_0 \in \mathfrak{M}(G)$ such that $m_0(\mathbb{1}_E)>0$. Let $(X,\mathcal{B},\mu,(T_g)_{g \in G})$ be an ergodic measure preserving system satisfying the equality \eqref{ergbetaz} for some set $A \in \mathcal{B}$ with $\mu(A)>0$. (See Theorem \ref{ergbeta} above). 
\\ \\
Let $\ve>0$. By Lemma \ref{sec6lem2}, we can find $g_1,\dots,g_k \in G$ such that 
\[\mu \left( \bigcup_{i=1}^k (T_{g_i})^{-1}A \right)>1-\ve, \]
since $\mu(A)>0$. Equality \eqref{ergbetaz} then implies that for some $m \in \mathfrak{M}(G)$ we have
\[ m(\mathbb{1}_{\bigcup_{i=1}^k g_i^{-1}E})>1-\ve,\]
whence the result follows from the definition of $d^*$.
\end{proof}
We conclude this section with brief remarks on yet another generalization of Hindman's covering theorem. Assume that $G$ is a locally compact amenable group (i.e. that $G$ has a left invariant \emph{topological} mean, see \cite{green} for more details). The Furstenberg's correspondence principle which was proved in \cite{bcrz} (see also \cite{bfurs}) can be "upgraded" to an ergodic Furstenberg correspondence principle similar to Theorem \ref{fc} and Theorem \ref{fc2}. Based on this enhancement one can prove a version of Hindman's Theorem for locally compact groups. Before providing the formulation we need two definitions.
\begin{definition}
Let $G$ be a locally compact amenable group and let $E \subseteq G$. We define the upper Banach density of $E$ as follows:
\begin{equation} d^*(E)=\sup \{ m(\mathbb{1}_E) : m \textrm{ is a left-invariant topological mean on } L^1(G,\mu)\},\end{equation}
where $\mu$ is a Haar measure on $G$.
\end{definition}
\begin{definition}
Let $G$ be a locally compact amenable group. We say that a set $E \subseteq G$ is \emph{substantial} if $E \supseteq UW$, where $U$ is a non-empty open subset in $G$ containing $\textrm{id}_G$ and $W$ is a measurable set with $d^*(W)>0$.
\end{definition}
\begin{theorem}\label{hindmantop}
Let $G$ be a locally compact amenable group. Let $E \subseteq G$ be a substantial set. Let $\varepsilon>0$. Then there exist $g_1,\dots, g_k \in G$ such that
\[ d^*\left( \bigcup_{i=1}^k g_i^{-1}E\right)>1-\varepsilon.\]
\end{theorem}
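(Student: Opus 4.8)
The plan is to follow the template of the proof of Theorem \ref{hindmansemi}, replacing each discrete ingredient by its locally compact counterpart. First note that substantiality already yields positivity of the density: if $E\supseteq UW$ with $U$ an open neighborhood of $\mathrm{id}_G$ and $d^*(W)>0$, then for any $u\in U$ we have $E\supseteq uW$, so by left-invariance of $d^*$ we get $d^*(E)\ge d^*(uW)=d^*(W)>0$; hence there is a left-invariant topological mean $m_0$ with $m_0(\mathbb{1}_E)>0$. The deeper role of substantiality is to manufacture a \emph{continuous} minorant of $\mathbb{1}_E$: the normalized convolution $f:=\frac{1}{\mu(U)}\mathbb{1}_U*\mathbb{1}_W$ is continuous (being the convolution of an $L^1$ function with a bounded one), satisfies $0\le f\le \mathbb{1}_E$ since its support lies in $UW\subseteq E$, and it is this continuous function, rather than the possibly ill-behaved $\mathbb{1}_E$, that can be transported by the topological correspondence principle in the non-discrete setting, where point evaluations are meaningless.

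Next I would invoke the ergodic upgrade of the Furstenberg correspondence principle of \cite{bcrz} (cf. \cite{bfurs}), the locally compact analogue of Theorem \ref{ergbeta}. This produces an \emph{ergodic} measure preserving system $(X,\mathcal{B},\mu,(T_g)_{g\in G})$, acting by continuous maps of a compact space, a set $A\in\mathcal{B}$ with $\mu(A)>0$ corresponding to $E$ via the minorant $f$, and a mean $\tilde m\in\mathfrak{M}(G)$ satisfying the union analogue of \eqref{ergbetaz}, namely
\[ \tilde m\Big(\mathbb{1}_{\,\bigcup_{i=1}^k g_i^{-1}E}\Big)=\mu\Big(\bigcup_{i=1}^k (T_{g_i})^{-1}A\Big) \]
for all finite families $g_1,\dots,g_k\in G$. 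Since $d^*$ is the supremum over means and $\tilde m$ is one such mean, this yields the union inequality $d^*(\bigcup_{i=1}^k g_i^{-1}E)\ge \mu(\bigcup_{i=1}^k (T_{g_i})^{-1}A)$. This is the step where substantiality is indispensable: it is the continuous function $f$ (not $\mathbb{1}_E$ itself) that gets carried to a genuine positive-measure set $A$.

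It then remains to establish the locally compact analogue of Lemma \ref{sec6lem2}: for the ergodic action $(T_g)_{g\in G}$ and any $A$ with $\mu(A)>0$, and any $\varepsilon>0$, there exist $g_1,\dots,g_k\in G$ with $\mu(\bigcup_{i=1}^k (T_{g_i})^{-1}A)>1-\varepsilon$. The proof is the verbatim contradiction argument of Lemma \ref{sec6lem2}, which rests only on the hitting statement of Lemma \ref{sec6lem1}, and the latter follows from the mean-ergodic identity $m(\langle f_1,T_g f_2\rangle)=\int_X f_1\,d\mu\int_X \bar f_2\,d\mu$ of Lemma \ref{ergthmmeans}. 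All of this survives the passage to locally compact $G$ because, by Eberlein's theorem (\cite{eberlein}, cf. Theorem 3.1 in \cite{burckel}), the matrix coefficients $g\mapsto\langle h_1,T_g h_2\rangle$ lie in $\mathrm{WAP}(G)$, on which there is still a unique invariant mean by Ryll--Nardzewski; thus Lemma \ref{ergthmmeans}, and hence Lemmas \ref{sec6lem1} and \ref{sec6lem2}, go through unchanged. Combining the covering statement with the union inequality gives $d^*(\bigcup_{i=1}^k g_i^{-1}E)\ge \mu(\bigcup_{i=1}^k (T_{g_i})^{-1}A)>1-\varepsilon$, as required.

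The main obstacle is the first step: verifying that the correspondence principle of \cite{bcrz}/\cite{bfurs} genuinely upgrades to an ergodic version in the topological-mean setting and interfaces correctly with substantiality. Concretely, one must check that the Choquet/ergodic-decomposition argument of Theorem \ref{ergbeta} — decomposing the topological mean into extreme, hence ergodic, invariant measures on the appropriate compactification and selecting a component $\tilde m$ with $\tilde m(\mathbb{1}_E)\ge m(\mathbb{1}_E)$ — remains valid when $\mathfrak{M}(G)$ denotes the left-invariant \emph{topological} means on $L^1(G,\mu)$, and that the continuous minorant $f$ built from $UW$ indeed produces a set $A$ of positive $\mu$-measure. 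Once this topological correspondence is in place, the remainder is a faithful transcription of the discrete-semigroup proof of Theorem \ref{hindmansemi}.
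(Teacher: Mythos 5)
Your route is precisely the one the paper intends: the paper in fact supplies \emph{no} detailed proof of Theorem \ref{hindmantop}, only the remark preceding it, namely that the correspondence principle of \cite{bcrz} (cf. \cite{bfurs}) can be upgraded to an ergodic version in the spirit of Theorems \ref{fc} and \ref{ergbeta}, after which the covering statement follows. Your two main structural points match that plan: substantiality is indeed the device that replaces the (useless, in the non-discrete setting) indicator $\mathbb{1}_E$ by a continuous function that the correspondence can transport, and the covering step is exactly Lemma \ref{sec6lem2}, whose proof via Lemmas \ref{ergthmmeans} and \ref{sec6lem1} uses only the Eberlein/Ryll--Nardzewski theory of $\textrm{WAP}(G)$ and the countable-exhaustion trick, both of which are insensitive to discreteness or countability of $G$, as you correctly observe.

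One step is overstated and would fail as literally written: the exact identity $\tilde{m}\bigl(\mathbb{1}_{\bigcup_{i=1}^k g_i^{-1}E}\bigr)=\mu\bigl(\bigcup_{i=1}^k (T_{g_i})^{-1}A\bigr)$ has no analogue for non-discrete $G$, since $\mathbb{1}_E$ and its translates are merely measurable and do not belong to the function algebra the topological correspondence transports --- this is the very reason substantiality must be assumed. What is available, and all you use, is the one-sided inequality, but extracting it from your minorant requires one more idea, because $\mathbb{1}_A\le \tilde f$ need not hold for any positive-measure $A$ (your $f=\frac{1}{\mu(U)}\mathbb{1}_U*\mathbb{1}_W$ may have small supremum, so a naive comparison loses a multiplicative factor, which is fatal for a $>1-\varepsilon$ conclusion). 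The repair: shrink $U$ to be relatively compact (substantiality is preserved), note that \emph{topological} invariance of the mean gives $\tilde{m}(f)=\tilde{m}(\mathbb{1}_W)>0$ (your translation argument $d^*(E)\ge d^*(W)$ is not enough here, since the correspondence only sees $f$), let $\tilde f\in C(X)$ be the image of $f$ and set $A:=\{\tilde f>0\}$, so that $\mu(A)\ge \int_X \tilde f\,d\mu=\tilde{m}(f)>0$. Then with $\psi_n(t):=\min(nt,1)$ one has $\psi_n\bigl(\max_i {}_{g_i}f\bigr)\le \mathbb{1}_{\bigcup_i g_i^{-1}E}$ pointwise on $G$ (because $\{f>0\}\subseteq UW\subseteq E$), while $\psi_n\bigl(\max_i \tilde f\circ T_{g_i}\bigr)\uparrow \mathbb{1}_{\bigcup_i (T_{g_i})^{-1}A}$ on $X$; since $\psi_n(\max_i\cdot)$ is a continuous functional calculus expression preserved by the Gelfand isomorphism, positivity of $\tilde{m}$ and monotone convergence give $d^*\bigl(\bigcup_i g_i^{-1}E\bigr)\ge \tilde{m}\bigl(\psi_n(\max_i {}_{g_i}f)\bigr)=\int_X \psi_n\bigl(\max_i \tilde f\circ T_{g_i}\bigr)\,d\mu \to \mu\bigl(\bigcup_i (T_{g_i})^{-1}A\bigr)$. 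With this substitute for your asserted equality (and the Choquet selection of an ergodic component on which $\int_X \tilde f\,d\mu$ does not decrease, exactly as in Theorem \ref{ergbeta}), your argument closes via Lemma \ref{sec6lem2} as you describe.
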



\begin{thebibliography}{10}
\bibitem[BBF]{bergelsonfish} M. Beiglb\"{o}ck, V. Bergelson\ and\ A. Fish, Sumset phenomenon in countable amenable groups, Adv. Math. {\bf 223} (2010), no.~2, 416--432.
\bibitem[B1]{berg1} V. Bergelson, Ergodic Ramsey theory, in {\it Logic and combinatorics (Arcata, Calif., 1985)}, 63--87, Contemp. Math., 65, Amer. Math. Soc., Providence, RI.
\bibitem[B2]{bergelsonproblemas} V. Bergelson. Ergodic theory and Diophantine problems. Topics in symbolic dynamics and applications (Temuco, 1997), 167--205, London Math. Soc. Lecture Note Ser., 279, Cambridge Univ. Press, Cambridge, 2000.
\bibitem[BCRZ]{bcrz} V. Bergelson, J. C. Christopherson, D. Robertson, P. Zorin-Kranich. Finite products sets and minimally almost periodic groups, J. Funct. Anal. {\bf 270} (2016), no.~6, 2126--2167.
\bibitem[BF]{bfurs} V. Bergelson\ and\ H. Furstenberg, WM groups and Ramsey theory, Topology Appl. {\bf 156} (2009), no.~16, 2572--2580.
\bibitem[BG]{bg} V. Bergelson\ and\ D. Glasscock, On the interplay between additive and multiplicative largeness and its combinatorial applications, J. Combin. Theory Ser. A {\bf 172} (2020), 105203, 60 pp.
\bibitem[BHK]{bhk} V. Bergelson, B. Host\ and\ B. Kra, Multiple recurrence and nilsequences, Invent. Math. {\bf 160} (2005), no.~2, 261--303.
\bibitem[BK]{bknutson} V. Bergelson\ and\ I. J. H\aa land Knutson, Weak mixing implies weak mixing of higher orders along tempered functions, Ergodic Theory Dynam. Systems {\bf 29} (2009), no.~5, 1375--1416.
\bibitem[BKMST]{by} V. Bergelson, G. Kolesnik, M. Madritsch, Y. Son and R. Tichy. Uniform distribution of prime powers and sets of recurrence and van der Corput sets in $\Z^k$, Israel J. Math. {\bf 201} (2014), no.~2, 729--760.
\bibitem[BKS]{bks} V. Bergelson, G. Kolesnik\ and\ Y. Son, Uniform distribution of subpolynomial functions along primes and applications, J. Anal. Math. {\bf 137} (2019), no.~1, 135--187.
\bibitem[BLei]{bl1} V. Bergelson\ and\ A. Leibman, Cubic averages and large intersections, in {\it Recent trends in ergodic theory and dynamical systems}, 5--19, Contemp. Math., 631, Amer. Math. Soc., Providence, RI.
\bibitem[BLes]{berles} V. Bergelson\ and\ E. Lesigne, Van der Corput sets in $\Bbb Z^d$, Colloq. Math. {\bf 110} (2008), no.~1, 1--49.
\bibitem[BMc]{bmc} V. Bergelson\ and\ R. McCutcheon, Recurrence for semigroup actions and a non-commutative Schur theorem, in {\it Topological dynamics and applications (Minneapolis, MN, 1995)}, 205--222, Contemp. Math., 215, Amer. Math. Soc., Providence, RI.
\bibitem[BMo]{bm} V. Bergelson\ and\ J. Moreira, Van der Corput's difference theorem: some modern developments, Indag. Math. (N.S.) {\bf 27} (2016), no.~2, 437--479.
\bibitem[BMoR]{bmr} V. Bergelson, J. Moreira and F. K. Richter. Single and multiple recurrence along non-polynomial sequences. arXiv:1171.05729, 2017.
\bibitem[Bi]{birk} G. Birkhoff, A note on topological groups, Compositio Math. {\bf 3} (1936), 427--430.
\bibitem[BKQW]{boskoqw} M. Boshernitzan, G. Kolesnik, A. Quas and M. Wierdl. Ergodic averaging sequences, J. Anal. Math. {\bf 95} (2005), 63--103.
\bibitem[Bu]{burckel} R. B. Burckel, {\it Weakly almost periodic functions on semigroups}, Gordon and Breach Science Publishers, New York, 1970. 
\bibitem[C1]{choquet1} G. Choquet, Existence et unicit\'{e} des repr\'{e}sentations int\'{e}grales au moyen des points extr\'{e}maux dans les c\^{o}nes convexes, in {\it S\'{e}minaire Bourbaki, Vol. 4}, Exp. 139, 33--47, Soc. Math. France, Paris.
\bibitem[C2]{choquet2} G. Choquet, Les c\^{o}nes convexes faiblement complets dans l'analyse, in {\it Proc. Internat. Congr. Mathematicians (Stockholm, 1962)}, 317--330, Inst. Mittag-Leffler, Djursholm.
\bibitem[DHZ]{dhz} T. Downarowicz, D. Huczek\ and\ G. Zhang, Tilings of amenable groups, J. Reine Angew. Math. {\bf 747} (2019), 277--298.
\bibitem[E]{eberlein} W. F. Eberlein, Abstract ergodic theorems and weak almost periodic functions, Trans. Amer. Math. Soc. {\bf 67} (1949), 217--240.
\bibitem[EW]{eward} M. Einsiedler\ and\ T. Ward, {\it Ergodic theory with a view towards number theory}, Graduate Texts in Mathematics, 259, Springer-Verlag London, Ltd., London, 2011.
\bibitem[FK]{katok2} R. Feres\ and\ A. Katok, Ergodic theory and dynamics of $G$-spaces (with special emphasis on rigidity phenomena), in {\it Handbook of dynamical systems, Vol. 1A}, 665--763, North-Holland, Amsterdam.
\bibitem[Fra]{fra} N. Frantzikinakis, Ergodicity of the Liouville system implies the Chowla conjecture, Discrete Anal. {\bf 2017}, Paper No. 19, 41 pp.
\bibitem[F1]{fthesis} H. Furstenberg, {\it Prediction Theory}, ProQuest LLC, Ann Arbor, MI, 1958.
\bibitem[F2]{fdisjoint} H. Furstenberg, Disjointness in ergodic theory, minimal sets, and a problem in Diophantine approximation, Math. Systems Theory {\bf 1} (1967), 1--49. 
\bibitem[F3]{fsz} H. Furstenberg, Ergodic behavior of diagonal measures and a theorem of Szemer\'{e}di on arithmetic progressions, J. Analyse Math. {\bf 31} (1977), 204--256.
\bibitem[F4]{furstenberg1} H. Furstenberg, Poincar\'{e} recurrence and number theory, Bull. Amer. Math. Soc. (N.S.) {\bf 5} (1981), no.~3, 211--234.
\bibitem[F5]{furstenberg2} H. Furstenberg, {\it Recurrence in ergodic theory and combinatorial number theory}, Princeton University Press, Princeton, NJ, 1981.
\bibitem[FKO]{fko} H. Furstenberg, Y. Katznelson\ and\ D. Ornstein, The ergodic theoretical proof of Szemer\'{e}di's theorem, Bull. Amer. Math. Soc. (N.S.) {\bf 7} (1982), no.~3, 527--552.
\bibitem[Gre]{green} F. P. Greenleaf, {\it Invariant means on topological groups and their applications}, Van Nostrand Mathematical Studies, No. 16, Van Nostrand Reinhold Co., New York, 1969.
\bibitem[GJ]{gillmanjerison} L. Gillman\ and\ M. Jerison, {\it Rings of continuous functions}, The University Series in Higher Mathematics, D. Van Nostrand Co., Inc., Princeton, NJ, 1960.
\bibitem[GLR]{lemangomil} A. Gomilko, M. Lemanczyk, T. de la Rue, On Furstenberg systems of aperiodic multiplicative functions of Matomaki, Radziwill and Tao. arXiv preprint: \url{https://arxiv.org/abs/2006.09958}.
\bibitem[GrS]{grsch} G. Greschonig\ and\ K. Schmidt, Ergodic decomposition of quasi-invariant probability measures, Colloq. Math. {\bf 84/85} (2000), part 2, 495--514. 
\bibitem[HK]{katok1} B. Hasselblatt\ and\ A. Katok, Principal structures, in {\it Handbook of dynamical systems, Vol. 1A}, 1--203, North-Holland, Amsterdam.
\bibitem[H1]{hindman2} N. Hindman, Finite sums from sequences within cells of a partition of $N$, J. Combinatorial Theory Ser. A {\bf 17} (1974), 1--11. 
\bibitem[H2]{hindman} N. Hindman, On density, translates, and pairwise sums of integers, J. Combin. Theory Ser. A {\bf 33} (1982), no.~2, 147--157.
\bibitem[HF1]{hostfra1} N. Frantzikinakis\ and\ B. Host, The logarithmic Sarnak conjecture for ergodic weights, Ann. of Math. (2) {\bf 187} (2018), no.~3, 869--931. 
\bibitem[HF2]{hostfra2} N. Frantzikinakis\ and\ B. Host, Furstenberg systems of bounded multiplicative functions and applications, arXiv preprint: \url{https://arxiv.org/abs/1804.08556}.

\bibitem[J]{j} R. I. Jewett, The prevalence of uniquely ergodic systems, J. Math. Mech. {\bf 19} (1969/1970), 717--729. 
\bibitem[K]{kaku} S. Kakutani, {\it Selected papers. Vol. 2}, Contemporary Mathematicians, Birkh\"{a}user Boston, Inc., Boston, MA, 1986.
\bibitem[Kr]{kr} W. Krieger, On unique ergodicity, in {\it Proceedings of the Sixth Berkeley Symposium on Mathematical Statistics and Probability (Univ. California, Berkeley, Calif., 1970/1971), Vol. II: Probability theory}, 327--346, Univ. California Press, Berkeley, CA.

\bibitem[N]{namioka} I. Namioka, F\o lner's conditions for amenable semi-groups, Math. Scand. {\bf 15} (1964), 18--28.
\bibitem[P]{paterson} A. L. T. Paterson, {\it Amenability}, Mathematical Surveys and Monographs, 29, American Mathematical Society, Providence, RI, 1988.
\bibitem[Ph]{phelps} R. R. Phelps, {\it Lectures on Choquet's theorem}, second edition, Lecture Notes in Mathematics, 1757, Springer-Verlag, Berlin, 2001.
\bibitem[R]{rosenthal} A. Rosenthal, Strictly ergodic models and amenable group action. PhD Thesis. Paris 6, 1986.
\bibitem[S]{schmidt} K. Schmidt, Asymptotic properties of unitary representations and mixing, Proc. London Math. Soc. (3) {\bf 48} (1984), no.~3, 445--460.
\bibitem[Sz]{sz} E. Szemer\'{e}di, On sets of integers containing no $k$ elements in arithmetic progression, Acta Arith. {\bf 27} (1975), 199--245.
\bibitem[V]{v} V. S. Varadarajan, Groups of automorphisms of Borel spaces, Trans. Amer. Math. Soc. {\bf 109} (1963), 191--220.
\bibitem[W]{weiss} B. Weiss, Strictly ergodic models for dynamical systems, Bull. Amer. Math. Soc. (N.S.) {\bf 13} (1985), no.~2, 143--146.
\end{thebibliography}
\end{document}